\documentclass[10pt]{article}
\usepackage[top=20mm, bottom=20mm, left=21mm, right=21mm]{geometry}
\usepackage{graphicx}
\usepackage{amsmath,amsthm,amssymb,amsfonts}
\usepackage{bm}
\usepackage{lipsum}
\usepackage{textgreek}

\theoremstyle{plain}
\newtheorem{theorem}{Theorem}[section]

\theoremstyle{definition}
\newtheorem{definition}{Definition}[section]

\newenvironment{assumption}
{\par\medskip\noindent}
{\par\medskip}
\makeatletter
\newcommand{\assumptionlabel}[2]{%
	\def\@currentlabel{#1}%
	\label{#2}%
}
\makeatother

\usepackage{xcolor}
\usepackage{soul}

\usepackage[utf8]{inputenc}
\usepackage[T1]{fontenc}
\usepackage{mathrsfs}
\usepackage{natbib}

\usepackage[colorlinks=true, allcolors=cyan]{hyperref}
\usepackage{comment}

\usepackage{mathtools}

\newcommand{\om}{\Omega}

\newcommand{\vp}{\bm{\varphi}}

\newcommand{\bv}{\bm{v}}

\newcommand{\be}{\bm{E}}

\newcommand{\bff}{\bm{f}}
\newcommand{\bF}{\bm{F}}
\newcommand{\bg}{\bm{G}}

\newcommand{\bi}{\bm{I}}

\newcommand{\dv}{\nabla\cdot}

\newcommand{\bu}{\bm{u}}

\newcommand{\ba}{\bm{a}}
\newcommand{\bb}{\bm{b}}

\newcommand{\bd}{\bm{d}}

\newcommand{\bA}{\bm{A}}
\newcommand{\bB}{\bm{B}}

\newcommand{\bD}{\bm{D}}

\newcommand{\bM}{\bm{M}}

\newcommand{\bW}{\bm{W}}

\newcommand{\bZ}{\bm{Z}}
\newcommand{\bJc}{\bm{\mathcal{J}}}

\newcommand{\som}{S \times \Omega}

\newcommand{\ino}{\int_{\Omega}}
\newcommand{\ins}{\int_{S}}
\newcommand{\inso}{\int_{S\times\Omega}}

\newcommand{\ddt}{\frac{d}{dt}}

\newcommand{\ltwo}{L^2}
\newcommand{\bDv}{\bm{D}(\bv)}

\newcommand{\bWv}{\bm{W}(\bv)}

\def\dev{\mathrm{dev}}

\newcommand{\bvne}{\bm{v}_n^{\varepsilon}}
\newcommand{\bFne}{\bm{F}_n^{\varepsilon}}
\newcommand{\qne}{q_n^{\varepsilon}}

\newcommand{\bbS}{\mathbb{S}}

\def\bPS{{\bm\Psi}}
\def\bDc{{\bm{\mathcal{D}}}}

\usepackage{etoolbox}
\usepackage{dsfont}
\usepackage{bbm}
\usepackage{tikz}
\usetikzlibrary{calc}
\usetikzlibrary{arrows.meta}
\usetikzlibrary{patterns}
\usepackage[normalem]{ulem}
\usepackage{subcaption}
\usepackage{caption}
\usepackage{chngcntr}
\counterwithout{figure}{section}
\usepackage{todonotes}
\usepackage{amsopn}
\usepackage{fix-cm}
\usepackage{ifthen}
\usepackage{authblk}   

\numberwithin{equation}{section}

\def\O{\Omega}

\def\G1{{\bf 1}}

\def\sc{\longrightarrow}
\def\f{{{\bm f}}}
\def\e{\varepsilon}

\definecolor{navyblue}{rgb}{0.0, 0.0, 0.5}

\newcommand{\tr}{\mathrm{tr}}
\newcommand{\DDt}{\frac{D}{Dt}}

\def\R{{\mathbb{R}}}

\def\p{\partial}

\def\Ec{{\mathcal{E}}}

\def\Qc{{\mathcal{Q}}}

\def\Uc{{\mathcal{U}}}

\def\Vc{{\mathcal{V}}}

\def\wc{{\;\rightharpoonup\;}}

\def\X{\times }
\def\div{{\mathrm{div}}}

\def\G{{\mathbb{G}}}

\definecolor{skyblue}{RGB}{135,206,235}
\definecolor{deepskyblue}{RGB}{0, 191, 255}
\usepackage{nicefrac}
\usepackage{mathtools}

\usepackage{charter} 
\usepackage{longtable}
\usepackage{float}
\usepackage{supertabular}

\usepackage{overpic}
\usepackage[section]{placeins}

\usepackage{enumitem}
\usepackage{siunitx}

\theoremstyle{plain}
\newtheorem{proposition}[theorem]{Proposition}
\newtheorem{lemma}[theorem]{Lemma}

\theoremstyle{definition}
\newtheorem{remark}[theorem]{Remark}

\def\bX{{\bm X}}
\def\bF{{\bm F}}
\def\bE{{\bm E}}
\def\bI{{\bm I}}

\definecolor{DarkRed}{RGB}{220,0,0}
\usepackage{bookmark}

\hypersetup{
	colorlinks=true,
	linkcolor=DarkRed,
	filecolor=magenta,
	urlcolor=blue!60!black,
	citecolor=blue!60!black,
	pdftitle={The Vanishing-Diffusion Limit of an Incompressible Visco-Morphoelastic System: Weak Solutions and a Jaumann Defect},
	pdfauthor={S. Banerjee, A. Chakrabortty, H. S. Mahato, G. P. Raja Sekhar},
}

\newcommand{\keywords}[1]{%
	\par\medskip\noindent\textbf{Keywords.}\ #1}
\newcommand{\msc}[1]{%
	\par\smallskip\noindent\textbf{2020 Mathematics Subject Classification.}\ #1}

\title{\bfseries The Vanishing-Diffusion Limit of an Incompressible Visco-Morphoelastic System: Weak Solutions and a Jaumann Defect}
\author{Swarupananda Banerjee\thanks{%
		Department of Mathematics, Indian Institute of Technology
		Kharagpur, Kharagpur 721302, West Bengal, India.
		Email: \href{mailto:sbanerjee.24@kgpian.iitkgp.ac.in}{\texttt{sbanerjee.24@kgpian.iitkgp.ac.in}}},~ 
	Amartya Chakrabortty\thanks{ 
		Department of Mathematics, Indian Institute of Technology
		Kharagpur, Kharagpur 721302, West Bengal, India.
		Email: \href{mailto:amartyat26@kgpian.iitkgp.ac.in}{\texttt{amartyat26@kgpian.iitkgp.ac.in}},~ \href{mailto:amartya.chakrabortty@gmail.com}{\texttt{amartya.chakrabortty@gmail.com}}},~
	Hari Shankar Mahato\thanks{%
		Department of Mathematics, Indian Institute of Technology
		Kharagpur, Kharagpur 721302, West Bengal, India.
		Email: \href{mailto:hsmahato@maths.iitkgp.ac.in}{\texttt{hsmahato@maths.iitkgp.ac.in}}},~ and~ 
	Raja Sekhar G P\thanks{%
		Department of Mathematics, Indian Institute of Technology
		Kharagpur, Kharagpur 721302, West Bengal, India.
		Email: \href{mailto:rajas@iitkgp.ac.in}{\texttt{rajas@iitkgp.ac.in}}}}

\date{}

\begin{document}
	
	\maketitle
	
	\vspace{-12mm}
	\begin{abstract}
		We study an incompressible visco-morphoelastic system in three dimensions
		in which the Eulerian effective strain is transported by the
		Zaremba--Jaumann rate and coupled to a Navier--Stokes momentum balance
		through a Kelvin--Voigt stress. The strain remains symmetric and splits
		into a scalar transport--relaxation equation for its trace and a
		deviatoric equation containing the nonlinear stretching and Jaumann
		terms. We regularize both equations by diffusion of order $\e$, which admits a mechanical interpretation as weakly nonlocal remodeling with a characteristic length scale. For fixed $\e>0$ we construct Leray--Hopf type weak
		solutions by a truncated Galerkin scheme and remove the truncation
		a posteriori using a maximum principle for the scalar variable. As
		$\e\to0$, the scalar variable converges strongly in $L^p((0,T)\times\om)$ for every $1\le p<\infty$, by a DiPerna--Lions commutator
		argument on bounded Lipschitz domains, allowing the stretching term
		to be identified in the limit. In contrast, the Jaumann commutator is
		a product of two weakly convergent sequences and its limit cannot be
		identified from the available estimates. We therefore obtain a defect
		\[
		\bDc\in L^2(0,T;(H^2(\om;\bbS_0))^\ast)
		\]
		in the limiting deviatoric equation. We give two sufficient conditions
		for $\bDc=0$. The energy-level cancellation used in related
		corotational viscoelastic models does not close here because the
		coefficient of the stretching term is itself an unknown.
	\end{abstract}
	
	\keywords{Morphoelasticity; visco-morphoelastic fluid; Zaremba--Jaumann rate;
		vanishing-diffusion limit; weak solutions.}
	\msc{35Q35, 35B25, 76A10, 74L15, 35D30.}

	{\small	\tableofcontents}

	\section{Introduction}
	\label{sec:intro}
	
	
	Morphoelasticity describes growth-induced permanent deformations in
	biological tissues.
	Originating from the multiplicative decomposition of finite
	plasticity~\cite{lee1969}, the framework was introduced into
	biomechanics by Rodriguez, Hoger, and McCulloch~\cite{rodriguez1994}
	and subsequently developed in the continuum theory of biological
	growth~\cite{goriely2017,goriely2011,hoger1993}. Morphoelasticity
	distinguishes the deformation associated with growth and remodeling
	from the elastic deformation required to accommodate these changes.
	Since the stress-free configuration generated by growth need not be
	compatible, this framework naturally accounts for growth-induced
	residual stresses and permanent deformation
	\cite{ambrosi2002,rodriguez1994}. It has been applied to a range of
	biological problems, including tissue remodeling, wound contraction,
	and post-burn contraction
	\cite{egberts2021,koppenolVermolenNiessen2017}.
	
	Classical morphoelasticity, however, primarily describes an elastic
	response coupled to an evolving stress-free configuration. In
	biological tissues and cellular aggregates, growth and remodeling
	occur concurrently with dissipative processes such as cellular
	rearrangement, adhesion remodeling, and cell division, which may
	induce time-dependent stress relaxation
	\cite{AmbrosiPreziosi2009,Marmottant2009}. This has motivated
	extensions of morphoelasticity incorporating viscoelastic relaxation.
	Ambrosi and Preziosi~\cite{AmbrosiPreziosi2009} introduced an
	elasto-visco-growth framework in which stress relaxation is associated
	with cellular rearrangement, while related morpho-viscoelastic models
	have been developed for tissue remodeling
	\cite{KoppenolVermolen2017}. More recently, Bandil and
	Vernerey~\cite{BandilVernerey2024} proposed a multiscale
	morpho-viscoelastic framework for proliferating cell aggregates.
	These developments motivate the study of morphoelastic systems in
	which growth-induced remodeling is coupled to time-dependent
	mechanical relaxation.
	
	From a mathematical perspective, rigorous analysis of such coupled
	systems remains limited. Existing studies of morphoelastic and
	visco-morphoelastic models have largely focused on model derivation,
	biological applications, numerical simulation, and stability analysis,
	often in reduced dimensions or in quasi-static settings
	\cite{Asghar2025,egberts2021,egberts2023}. By contrast, rigorous
	weak-existence theories are available for several related viscoelastic
	fluid and solid models, including Oldroyd-type fluids
	\cite{BarrettBoyaval2011,KremlPokornySalom2015,LionsMasmoudi2000} and nonlinear viscoelastic solids
	\cite{BulicekPatelSuliSengul2022}. These theories, however, do not
	incorporate the growth-induced strain evolution and nonlinear coupling
	characteristic of the present visco-morphoelastic setting. In
	particular, a rigorous theory for fully dynamic three-dimensional
	models of the type considered here, including weak existence and
	singular limits associated with vanishing mechanical regularization,
	appears to be absent.
	
	\smallskip
	We consider the incompressible visco-morphoelastic system obtained by
	coupling the effective-strain evolution of Hall~\cite[Eq.\,(5.64)]{Hall2008} to the
	balance of linear momentum with a Kelvin--Voigt stress. Let
	$\om\subset\R^3$ be a bounded Lipschitz domain and let $S=(0,T)$. The
	Eulerian effective strain $\be$ satisfies
	\begin{equation}\label{eq:intro-strain}
		\p_t\be+(\bv\cdot\nabla)\be
		+\be\,\bWv-\bWv\,\be
		+\bigl(\tr(\be)-1\bigr)\bDv
		=-\bg ,
	\end{equation}
	where $\bDv$ and $\bWv$ denote the symmetric and skew-symmetric parts
	of $\nabla\bv$, respectively, and $\bg$ describes the active remodeling
	of the locally relaxed configuration. The first four terms constitute
	the Zaremba--Jaumann derivative of $\be$, whose commutator term ensures
	objectivity. Coupling \eqref{eq:intro-strain} with the momentum balance
	gives the system \eqref{model+}. We do not model the biochemical
	processes driving remodeling explicitly and prescribe the affine law
	\begin{equation}\label{eq:intro-G}
		\bg=\alpha\be-\beta\bI ,
		\qquad \alpha>0,\quad\beta\ge0 .
	\end{equation}
	The strain-dependent term is motivated by existing morphoelastic
	formulations~\cite{KoppenolVermolen2017}, while $-\beta\bm  I$
	represents an additional strain-independent isotropic remodeling
	tendency. The law introduces no intrinsic material direction or memory.

	The analytical difficulty is already visible in
	\eqref{eq:intro-strain}. The strain is transported by the flow and
	relaxes pointwise, but the equation contains no spatial regularization.
	The natural energy estimate therefore gives $\be$ bounded only in $L^\infty(0,T;L^2(\om))$.
	At the same time, objectivity introduces the nonlinear Jaumann
	commutator $\be\bWv-\bWv\be$,
	which is pointwise orthogonal to $\be$ and hence does not contribute to
	the strain energy. 
	A further difficulty is specific to the morphoelastic coupling: the
	stretching term is multiplied by the unknown coefficient
	$\tr(\be)-1$. Since $\be$ remains symmetric, we write
	\begin{equation}\label{eq:intro-split}
		q:=\tr(\be),
		\qquad
		\bF:=\dev\be= \be - \frac{q}{3}\bI.
	\end{equation}
	The strain equation then separates into a closed scalar
	transport--relaxation equation for $q$ and a deviatoric equation for
	$\bF$ retaining the nonlinear stretching and Jaumann terms. This
	trace--deviatoric decomposition is central to the analysis.
	
	To obtain compactness at the approximation level, we introduce a diffusion of
	order $\e>0$ into both strain equations, leading to the regularized system
	\eqref{Rmodel+}. Equivalently, the remodeling law \eqref{eq:intro-G} is
	replaced by
	\[
	\bg_\e=\alpha\be-\beta\bI-\e\Delta\be ,
	\]
	together with the Neumann condition in \eqref{Rmodel+}$_6$. This law is the
	variational derivative of the quadratic remodeling potential augmented by
	the penalty $\tfrac{\e}{2}|\nabla\be|^2$, so the limit $\e\to0$ is also a
	passage from weakly nonlocal to strictly local remodeling; see
	Remark~\ref{rem:Geps}.
	
	%
	\smallskip
	
	The analysis has two stages. We first establish weak existence for the
	regularized system \eqref{Rmodel+} with fixed $\e>0$, and then study the
	limit $\e\to0$.
	
	\textbf{Existence for the regularized system.}
	For every fixed $\e>0$ we construct a Leray--Hopf type weak solution of
	\eqref{Rmodel+}. The construction is based on a truncated Galerkin
	scheme. The truncation is removed a posteriori using a maximum principle
	for the scalar variable $q^\e$, which yields an $\e$-independent
	pointwise bound on $q^\e$. In particular, the coefficient
	$q^\e-1$ in the stretching term is uniformly controlled. This is Theorem~\ref{Thm_existence}.
	
	\textbf{Vanishing-diffusion limit and strong convergence of the trace.}
	The uniform estimates allow us to extract weakly convergent subsequences
	as $\e\to0$. The strain variables, however, are controlled only in
	$L^\infty(0,T;L^2(\om))$, so weak convergence does not suffice to pass
	to the nonlinear terms in the strain equation. The trace--deviatoric
	decomposition \eqref{eq:intro-split} isolates a scalar transport--relaxation
	equation for $q^\e$. Using a DiPerna--Lions commutator argument
	\cite{DiPernaLions1989}, we recover the limiting energy identity for the
	trace and obtain
	\[
	q^\e\longrightarrow q
	\qquad\text{strongly in }L^p(\som)
	\quad\text{for every }1\le p<\infty .
	\]
	Consequently, the nonlinear stretching term
	$(q^\e-1)\bD(\bv^\e)$ is identified in the limit. This is Theorem~\ref{thm:qstrong}.
	
	\textbf{The Jaumann defect.}
	The deviatoric equation presents a different compactness
	problem. The Jaumann commutator $\bF^\e\bW(\bv^\e)-\bW(\bv^\e)\bF^\e$
	contains two factors that converge only weakly, and the available energy
	estimates provide no mechanism for identifying their product. We therefore
	record its weak limit through a defect
	\[
	\bDc\in
	L^2\bigl(0,T;(H^2(\om;\bbS_0))^*\bigr).
	\]
	All remaining terms in the deviatoric equation converge, and the limit
	system is consequently satisfied in the distributional sense with
	$\bDc$ as an additional source term. The defect is determined by the
	approximating sequence and subsequence, but in general cannot be recovered
	from the limit fields $(\bv,q,\bF)$ alone. This is the content of
	Theorem \ref{thm:defect-limit}.
	
	\textbf{Criteria for the absence of the defect.}
	Finally, we identify two sufficient conditions under which the defect
	vanishes:
	\[
	\bF^\e\to\bF
	\quad\text{strongly in }
	L^\infty(0,T;L^2(\om;\bbS_0)),\quad\text{or}\quad
	\nabla\bv^\e\to\nabla\bv
	\quad\text{strongly in }L^2(\som)^{3\X3}.
	\]
	Either condition gives strong convergence of one factor of the Jaumann
	commutator and hence identifies its limit; neither follows from the
	uniform estimates (Proposition~\ref{cor:mu_zero_implies_m_zero}).

	\smallskip
	
	The analytical structure of \eqref{model+} is closely related to
	viscoelastic fluid models in which an internal tensor is transported by
	the flow and coupled to the momentum equation. For Oldroyd-type systems,
	local strong solutions and global small-data solutions are classical
	\cite{cheminMasmoudi2001,guillopeSaut1990}, while global weak solutions
	for large data are available in the corotational case due to Lions and
	Masmoudi \cite{LionsMasmoudi2000}. Their analysis combines propagation
	of compactness with the theory of renormalized transport equations
	developed by DiPerna and Lions \cite{DiPernaLions1989}. The corresponding
	general non-corotational problem remains substantially more difficult.
	
	A common way to recover compactness is to introduce stress diffusion.
	For regularized Oldroyd-type systems, global weak existence and
	regularity results have been obtained by several authors
	\cite{BarrettBoyaval2011,ConstantinKliegl2012,
		KremlPokornySalom2015}. Vanishing-diffusion limits have also been
	studied in restricted settings
	\cite{mensah2024,wangWen2023}. Our regularization
	\eqref{Rmodel+} has the same analytical role, although here the
	diffusion acts on the morphoelastic strain rather than on a constitutive
	stress variable.
	
	The closest analytical results concern viscoelastoplastic fluids in
	which a symmetric deviatoric internal stress is transported by the
	Zaremba--Jaumann rate. Eiter, Hopf, and Mielke
	\cite{eiterHopfMielke2022} established global existence for a
	three-dimensional system with stress diffusion, while Eiter, Hopf, and
	Lasarzik \cite{eiterHopfLasarzik2023} studied the limit of vanishing
	stress diffusion. They identified the loss of weak sequential
	compactness of the Jaumann commutator and introduced
	energy-variational solutions to obtain a closed limiting solution
	concept. This approach has subsequently been developed further
	\cite{agostiLasarzikRocca2024} and applied to related multiphase
	systems \cite{chengLasarzikThomas2026}.
	
	The present problem has the same compactness obstruction. It differs in
	one respect: the stretching term in the deviatoric equation is multiplied
	by $q^\e-1$, where $q^\e$ is itself an unknown. This variable coefficient
	prevents the energy-level cancellation used in the viscoelastic setting
	(Section~\ref{sec-conclusion}) and is the reason for the trace--deviatoric
	decomposition \eqref{eq:intro-split}. We retain the distributional formulation rather
	than replacing it by an energy-variational solution concept. The loss of
	compactness is instead recorded explicitly by the defect $\bDc$, for
	which we also identify sufficient conditions for vanishing.
	
	The defect viewpoint is related to the general use of auxiliary defect
	objects to record oscillation and concentration in weak limits, beginning
	with the work of DiPerna and Majda on incompressible Euler equations
	\cite{diPernaMajda1987}. In the present setting the defect is not a
	measure but an element of
	$L^2(0,T;(H^2(\om;\bbS_0))^*)$, reflecting the weak compactness available
	for the Jaumann commutator. Its role and possible finer
	representations are discussed in Section \ref{sec-conclusion}.
	%
	
	\section{Notation and function spaces}
	Throughout the paper, let $\Omega\subset\mathbb{R}^3$ be a bounded domain with Lipschitz boundary and $S=(0,T)$.
	We denote by $L^p(\Omega)$ and $W^{k,p}(\Omega)$ the standard Lebesgue and Sobolev spaces, respectively. In particular, $H^k(\Omega)=W^{k,2}(\Omega)$,
	\[
	H_0^1(\Omega)=\{u\in H^1(\Omega):u|_{\partial\Omega}=0\},\quad\text{and}\quad H^1_{0,\div}(\Omega)^3
	:=
	\{\bu\in H_0^1(\Omega)^3:\dv\bu=0\text{ in }\Omega\}.
	\]
	Moreover,
	\[
	\mathbb{S}
	=
	\{\bM\in\mathbb{R}^{3\times3}:\bM^T=\bM\},
	\qquad
	\mathbb{S}_0
	=
	\{\bM\in\mathbb{S}:\tr(\bM)=0\}.
	\]
	Unless otherwise stated, $(\cdot,\cdot)$ denotes the standard $L^2$ inner product, with associated norm $\|\cdot\|_{L^2}$, while $\|\cdot\|_{H^1}$ denotes the norm on $H^1(\Omega)$. For a Banach space $\mathbb{X}$, its dual is denoted by $\mathbb{X}^\ast$, and $\langle\cdot,\cdot\rangle$ denotes the corresponding duality pairing, where the underlying spaces are understood from the context.
	Whenever Bochner spaces are involved, the pairing is understood between the corresponding evolution spaces, which will always be clear from the context.
	
	In addition, for any function $g(x,t)$ defined on $S\times\om$, we adopt the following shorthand for integrals:
	$$ \ino g:=\ino g(x,t)\,dx,\quad\ins g:=\ins g(x,t)\,dt,\quad \inso\,g:=\inso g(x,t)\,dx\,dt.$$
	We write $Q_t:=\O\times(0,t)$ for $0<t\le T$.

	\section{Visco-morphoelastic models in strong form}
	\label{sec-strong}
	
	\subsection{Modeling framework and incompressible reduction}
	\label{SubsecModelDerivation}
	
	We recall the continuum-mechanical origin of the system stated in
	Section \ref{sec:intro}. In morphoelasticity, the deformation gradient
	is decomposed multiplicatively as $\bF=\bA\bZ$, where $\bZ$ maps the reference
	configuration to a locally relaxed configuration and $\bA$ maps the relaxed
	configuration to the current configuration; see
	\cite{GorielyBenAmar2007,rodriguez1994}. In the infinitesimal effective-strain
	theory of Hall \cite{Hall2008}, the Eulerian effective strain is defined by
	$\be=\bI-\bA^{-1}$, and its evolution is governed by \eqref{eq:intro-strain},
	whose first four terms constitute the Jaumann derivative
	\[
	\overset{\circ}{\be}
	:=
	\p_t\be+(\bv\cdot\nabla)\be+\be\bWv-\bWv\be ,
	\]
	so that \eqref{eq:intro-strain} may be written compactly as
	$\overset{\circ}{\be}+(\tr(\be)-1)\bDv=-\bg$. Here
	\[
	\bDv:=\frac{1}{2}\bigl(\nabla \bv+\nabla \bv^\top\bigr),
	\qquad
	\bWv:=\frac{1}{2}\bigl(\nabla \bv-\nabla \bv^\top\bigr).
	\]
	A detailed derivation of \eqref{eq:intro-strain} is given in \cite{Hall2008};
	its use in a visco-morphoelastic continuum model is described in
	\cite{KoppenolVermolen2017}.
	
	Let $\rho$ denote the mass density and ${\bm \sigma}_c$ the total Cauchy stress.
	The local balances of mass and linear momentum are
	\begin{equation}
		\partial_t\rho+\div(\rho \bv)=0,
		\qquad
		\partial_t(\rho \bv)+\div(\rho \bv\otimes \bv)
		=\div{\bm \sigma}_c+\f.
		\label{EqBalanceLaws}
	\end{equation}
	Using the mass balance, the momentum equation may equivalently be written as
	\[
	\rho\bigl(\partial_t\bv+(\bv\cdot\nabla)\bv\bigr)
	=\div{\bm\sigma}_c+\f.
	\]
	Following the Kelvin--Voigt-type constitutive law used in
	\cite{KoppenolVermolen2017}, we take the viscoelastic part of the stress to be
	\begin{equation}
		{\bm \sigma}
		=
		\mu_1\bDv
		+\mu_2\div(\bv)\bI
		+\lambda \be
		+2\mu\tr(\be)\bI,
		\label{EqViscoelasticStress}
	\end{equation}
	where $\mu_1,\mu_2>0$ are viscosity coefficients and $\lambda,\mu>0$ are
	elastic coefficients.
	
	As a first analytical setting, we consider a homogeneous incompressible
	material. Thus the density is a positive constant, $\rho\equiv\rho_0>0$, and
	\[
	\div \bv=0 ,
	\]
	while the total Cauchy stress is
	\[
	{\bm \sigma}_c=-p\bI+{\bm \sigma},
	\]
	where $p$ is the pressure associated with the incompressibility constraint.
	Then the bulk-viscous term in \eqref{EqViscoelasticStress} vanishes; we
	retain it to display the viscoelastic origin of the stress.
	
	Combining \eqref{eq:intro-strain}, \eqref{EqBalanceLaws} and
	\eqref{EqViscoelasticStress} with supplementary initial and boundary
	conditions, we obtain the $3$D incompressible visco-morphoelastic system
	\begin{equation}
		\label{model+}
		\begin{aligned}
			&\rho \, \partial_t \bv + \rho\,(\bv\cdot\nabla)\bv+\nabla p=\nabla\cdot\bm{\sigma}+\f,
			&& \quad\text{in } \som ,\\[0.5em]
			&\partial_t\be+(\bv\cdot\nabla)\be
			+\be\,\bWv-\bWv\,\be 
			+\bigl(\tr(\be)-1\bigr)\,\bDv
			=-\bg ,
			&&\quad\text{in } \som ,\\[0.5em]
			&\bm{\sigma}
			=\mu_1\,\bDv
			+\mu_2\,(\nabla\cdot\bv)\,\bI
			+\lambda\be+2\mu\tr(\be)\bI,
			&&\quad\text{in } \som ,\\[0.5em]
			&\nabla\cdot\bv=0,
			&&\quad\text{in }\som,\\[0.5em]
			&\bv(\cdot,0)=\bv_0,\quad\be(\cdot,0)=\be _0, 
			&&\quad\text{in }\om,\\[0.5em]
			&\bv=\bm{0}, 
			&&\quad\text{on } S\times\partial\om.
		\end{aligned}
		\tag{\text{{\bf M1}}}
	\end{equation}

	The remodeling law $\bg$ has been left unspecified so far. Before selecting it
	we record a structural property of \eqref{model+} which is independent of that
	choice, and on which the decomposition of the next subsection rests: the
	effective strain stays symmetric whenever the remodeling law is symmetric.
	
	\begin{proposition}\label{symmetry_E_general}
		Let $(\bv,\be)$ be a sufficiently regular solution of \eqref{model+} with
		given $\be_0$ and $\bg$. If $\bg^\top=\bg$ and $\be_0=\be_0^\top$ pointwise,
		then
		\begin{equation}\label{syE}
			\be (x,t) = [\be (x,t)]^\top
			\quad\text{for all } (x,t)\in \som .
		\end{equation}
	\end{proposition}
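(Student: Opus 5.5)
We derive a linear transport equation for the antisymmetric part of $\be$ and show by an energy estimate that it stays zero. Set
\[
\bA:=\be-\be^\top .
\]
Take the transpose of the strain equation \eqref{model+}$_2$. The transport part $(\bv\cdot\nabla)\be$ commutes with transposition. The stretching term $(\tr(\be)-1)\bDv$ is symmetric because $\bDv$ is symmetric and $\tr(\be^\top)=\tr(\be)$. The remodeling term satisfies $\bg^\top=\bg$ by hypothesis. For the Jaumann commutator, $\bWv^\top=-\bWv$ gives
\[
(\be\bWv-\bWv\be)^\top=-\bWv\be^\top+\be^\top\bWv .
\]
So the transposed equation has exactly the same form with $\be$ replaced by $\be^\top$. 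Subtracting it from the original equation, every symmetric term cancels, and we obtain
\[
\p_t\bA+(\bv\cdot\nabla)\bA+\bA\bWv-\bWv\bA=\bm 0 \quad\text{in }\som,\qquad \bA(\cdot,0)=\bm 0 .
\]
This step uses only the structure of the equation and holds for any symmetric $\bg$. It also covers the case where $\bg$ depends on $\be$, as in \eqref{eq:intro-G}: what matters is that $\bg$ is symmetric along the given solution, which is the stated hypothesis.

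Next we test this linear system with $\bA$ in the Frobenius inner product and integrate over $\om$. The Jaumann commutator is pointwise orthogonal to $\bA$, since by cyclicity of the trace and skewness of $\bWv$,
\[
\bA:(\bA\bWv-\bWv\bA)=\tr(\bA^\top\bA\bWv)-\tr(\bA^\top\bWv\bA)=\tr\bigl((\bA\bA^\top-\bA^\top\bA)\bWv\bigr)=0 .
\]
The last equality holds because $\bA\bA^\top-\bA^\top\bA$ is symmetric and its contraction with the skew matrix $\bWv$ vanishes. The transport term gives
\[
\ino(\bv\cdot\nabla)\bA:\bA=\tfrac12\ino\bv\cdot\nabla|\bA|^2=-\tfrac12\ino(\dv\bv)|\bA|^2=0,
\]
using $\dv\bv=0$ and $\bv=\bm 0$ on $\p\om$. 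Hence $\ddt\|\bA\|_{L^2}^2=0$, and with $\bA(0)=\bm 0$ we get $\bA\equiv\bm 0$ on $\som$, which is \eqref{syE}.

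The only delicate point is justifying the integration by parts and the energy identity. I take "sufficiently regular" to mean, for instance, $\be\in C^1$ and $\bv\in C^1$ up to the boundary, or at least enough integrability that $\bA\in C(\bar S;L^2)$ with the products integrable. If incompressibility were not available, a Gronwall argument with $\|\dv\bv\|_{L^\infty}$ would serve instead. Under the paper's regularity assumption this step is routine.
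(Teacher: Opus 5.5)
Your proof is correct and is the paper's argument. You transpose, subtract to get the homogeneous Jaumann-transport equation for the antisymmetric part, and test it by itself, using incompressibility for the convective term and pointwise orthogonality of the commutator. Your justification of that orthogonality uses the symmetry of $\bA^\top\bA-\bA\bA^\top$, which holds for any $\bA$, whereas the paper uses skewness of $\bA$. Your trace identity has a harmless sign slip: the difference is $\tr\bigl((\bA^\top\bA-\bA\bA^\top)\bWv\bigr)$.

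One caution on your aside: for $\bg=\alpha\be-\beta\bI$, the hypothesis $\bg^\top=\bg$ is equivalent to the conclusion, so the proposition does not genuinely cover that law. This is why the paper proves Lemma~\ref{symmetry_E} separately, keeping the term $\alpha\bA$ and obtaining $\|\bA(t)\|_{L^2(\om)}=e^{-\alpha t}\|\bA(0)\|_{L^2(\om)}$.
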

	\begin{proof}
		From equation \eqref{model+}$_2$, we have 
		\begin{equation}\label{E}
			\DDt \be + \be \,\bWv - \bWv\,\be  + \bigl(\tr(\be ) - 1\bigr)\,\bDv= -\bg,
		\end{equation}
		where $\DDt:=\p_t+(\bv\cdot\nabla)$.
		The scalar $\tr(\be)$ is invariant under transposition and $\bDv$ is
		symmetric, so the third term is unchanged when \eqref{E} is transposed,
		while $(\be\bWv-\bWv\be)^\top=\be^\top\bWv-\bWv\be^\top$ by the
		skew-symmetry of $\bWv$. Taking the transpose on each side of \eqref{E}
		therefore yields
		\begin{equation}\label{Et}
			\DDt \be ^\top+\be ^\top\, \bWv- \bWv\, \be ^\top + \bigl(\tr(\be ) - 1\bigr) \bDv\, = -\bg^\top.
		\end{equation}
		Subtracting \eqref{E} from \eqref{Et} and using $\bg^\top=\bg$, we obtain
		\begin{equation*}
			\DDt(\be ^\top-\be )+(\be ^\top-\be )\bWv-\bWv (\be ^\top-\be )= \bm{0}.
		\end{equation*}
		Setting $\bA:=\be^\top-\be$, the symmetry of $\be_0$ gives
		$\bA(x,0)=\bA_0=\bm 0$ in $\om$, and hence
		\begin{equation}\label{od}
			\left\{
			\begin{aligned}
				\p_t\bA+(\bv\cdot\nabla)\bA&=\bWv\bA-\bA \bWv,\quad   &&\text{in}\, \som ,\\
				\bA_0&=\bm 0,\quad &&\text{in}\,\om.
			\end{aligned}
			\right.
		\end{equation}
		Testing \eqref{od} by $\bA$ in the sense of the Frobenius inner product
		$\bA:\bB=\tr(\bA^\top\bB)$ over $\Omega$ yields
		\begin{align*}
			\langle\partial_t\bA,\bA\rangle+\ino(\bv\cdot\nabla)\bA:\bA\,dx=\ino(\bWv\bA-\bA\bWv):\bA\,dx .
		\end{align*}
		By the chain rule, and by integration by parts together with
		\eqref{model+}$_{4,6}$,
		\begin{equation}\label{eq_id1}
			\langle\partial_t\bA,\bA\rangle
			=\frac{1}{2}\frac{d}{dt}\|\bA(t)\|_{L^2(\Omega)}^2,
			\qquad
			\int_{\Omega} (\bv\cdot\nabla)\bA:\bA\,dx
			=-\frac{1}{2}\int_{\Omega}(\nabla\cdot\bv)\,|\bA|^2\,dx=0 .
		\end{equation}
		For the right-hand side, the cyclic property of the trace gives
		\[
		\ino(\bWv\bA-\bA\bWv):\bA\,dx
		=\ino\Bigl(\tr\bigl(\bA^\top\bA\,\bWv\bigr)-\tr\bigl(\bA\bA^\top\bWv\bigr)\Bigr)dx ,
		\]
		and $\bA$ is skew-symmetric by construction, so
		$\bA^\top\bA=\bA\bA^\top=-\bA^2$ and the two traces cancel pointwise.
		Collecting the above identities, we conclude that
		$$\frac{1}{2}\frac{d}{dt}\|\bA(t)\|_{L^2(\om)}^2=0 .$$
		Therefore $\|\bA(t)\|_{L^2(\om)}^2=\|\bA_0\|_{L^2(\om)}^2$, and since
		$\bA_0=\bm 0$,
		$$\|\bA(t)\|_{L^2(\om)}^2=0 \quad \text{for all } t\in[0,T]\quad\implies\quad\bA(x,t)=\bm 0 \quad \text{a.e.\ in }  \som .$$
		By regularity this equality holds pointwise, and hence $\be$ is symmetric.
	\end{proof}
	
	From now on we prescribe the affine remodeling law \eqref{eq:intro-G}. In the
	absence of deformation it describes relaxation toward the isotropic
	homeostatic strain $(\beta/\alpha)\bI$ at rate $\alpha$.
	
	The law \eqref{eq:intro-G} does not satisfy the hypothesis of
	Proposition \ref{symmetry_E_general} a priori, since
	$\bg^\top-\bg=\alpha(\be^\top-\be)$ vanishes precisely when the conclusion
	holds. The argument nevertheless applies.
	\begin{lemma}\label{symmetry_E}
		Let $(\bv,\be)$ be a sufficiently regular solution of \eqref{model+} with
		$\bg$ given by \eqref{eq:intro-G}. Then $\bA:=\be^\top-\be$ satisfies
		\[
		\|\bA(t)\|_{L^2(\om)}= e^{-\alpha t}\,\|\bA(0)\|_{L^2(\om)},
		\qquad t\in[0,T].
		\]
		In particular, if $\be_0=\be_0^\top$ pointwise then
		$\be(x,t)=[\be(x,t)]^\top$ for all $(x,t)\in\som$.
	\end{lemma}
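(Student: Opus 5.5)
We follow the proof of Proposition~\ref{symmetry_E_general}, keeping track of the extra term that the affine law \eqref{eq:intro-G} contributes. Substituting $\bg=\alpha\be-\beta\bI$ into \eqref{E} and transposing, we use three facts: the stretching term $(\tr(\be)-1)\bDv$ is invariant under transposition, $(\be\bWv-\bWv\be)^\top=\be^\top\bWv-\bWv\be^\top$, and $\bI^\top=\bI$. Subtracting \eqref{E} from its transpose, the $\beta\bI$ and stretching contributions cancel. The remaining equation for $\bA:=\be^\top-\be$ reads
\[
\p_t\bA+(\bv\cdot\nabla)\bA+\bA\bWv-\bWv\bA=-\alpha\bA
\quad\text{in }\som,
\qquad \bA(\cdot,0)=\be_0^\top-\be_0 .
\]
This is \eqref{od} with an additional linear damping term $-\alpha\bA$ on the right.

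Next we test this equation with $\bA$ in the Frobenius inner product over $\om$, exactly as in the proof of Proposition~\ref{symmetry_E_general}.
\begin{itemize}
\item The time derivative gives $\tfrac12\ddt\|\bA\|_{L^2}^2$.
\item The convective term vanishes by \eqref{eq_id1}, using $\dv\bv=0$ and $\bv=\bm 0$ on $\p\om$.
\item The commutator term vanishes pointwise. By the cyclic property of the trace it reduces to $\tr(\bA^\top\bA\bWv)-\tr(\bA\bA^\top\bWv)$, and skew-symmetry of $\bA$ gives $\bA^\top\bA=\bA\bA^\top$.
\end{itemize}
Only the damping term survives, and we obtain
\[
\frac12\ddt\|\bA(t)\|_{L^2(\om)}^2=-\alpha\|\bA(t)\|_{L^2(\om)}^2 .
\]

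This linear ODE for $y(t):=\|\bA(t)\|_{L^2}^2$ integrates to $y(t)=e^{-2\alpha t}y(0)$. Taking square roots yields the claimed identity $\|\bA(t)\|_{L^2}=e^{-\alpha t}\|\bA(0)\|_{L^2}$ for $t\in[0,T]$. If $\be_0$ is symmetric, then $\bA(0)=\bm 0$, so $\bA=\bm 0$ a.e. in $\som$. By the assumed regularity (continuity of $\be$), this holds pointwise, so $\be$ is symmetric.

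There is no serious obstacle here. The only point needing care is the justification of the chain rule $\langle\p_t\bA,\bA\rangle=\tfrac12\ddt\|\bA\|^2$ and of the integration by parts in the convective term. Both are covered by the standing assumption that the solution is sufficiently regular, e.g. $\be\in C^1$ and $\bv$ Lipschitz up to the boundary. Unlike in Proposition~\ref{symmetry_E_general}, no symmetry of $\bg$ is assumed a priori; the antisymmetric part $\alpha\bA$ of $\bg$ is simply moved into the equation for $\bA$, where it acts as damping.
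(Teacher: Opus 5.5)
Your proposal is correct and follows the paper's proof essentially step for step. You use $\bg^\top-\bg=\alpha\bA$ to obtain the damped transport equation for $\bA$, and test it with $\bA$. The convective and commutator terms then vanish, and integrating $\tfrac12\ddt\|\bA\|_{L^2}^2+\alpha\|\bA\|_{L^2}^2=0$ gives the exponential identity.
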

	\begin{proof}
		For $\bg=\alpha\be-\beta\bI$ we have
		$\bg^\top-\bg=\alpha(\be^\top-\be)=\alpha\bA$, since $\bI$ is symmetric. Repeating the subtraction in the proof of
		Proposition \ref{symmetry_E_general} without the symmetry hypothesis on
		$\bg$ therefore gives, in place of \eqref{od},
		\[
		\p_t\bA+(\bv\cdot\nabla)\bA+\alpha\bA=\bWv\bA-\bA\bWv
		\qquad\text{in }\som .
		\]
		Testing by $\bA$ and using \eqref{eq_id1} together with the pointwise
		cancellation of the commutator term, we obtain
		\[
		\frac{1}{2}\frac{d}{dt}\|\bA(t)\|_{L^2(\om)}^{2}+\alpha\|\bA(t)\|_{L^2(\om)}^{2}=0 ,
		\]
		whence $\|\bA(t)\|_{L^2(\om)}=e^{-\alpha t}\|\bA(0)\|_{L^2(\om)}$ for
		$t\in[0,T]$. If in addition $\be_0=\be_0^\top$, then $\bA(0)=\bm 0$ and
		$\be$ is symmetric on $\som$, as in
		Proposition \ref{symmetry_E_general}.
	\end{proof}
	
	\subsection{Decomposition and vanishing-diffusion regularization}
	\label{SubsecDecomposition}
	
	The two nonlinearities in the strain equation are of different types, and
	separating them is the first step of the analysis. Since $\be$ is symmetric by
	Lemma \ref{symmetry_E}, it splits as in \eqref{eq:intro-split} into its
	trace and deviatoric parts:
	\begin{equation}\label{eq:Dec}
		\be = \bF + \frac{q}{3}\bI,
		\qquad
		q = \tr(\be),
		\qquad
		\bF = \dev\be = \be - \frac{q}{3}\bI,
		\qquad
		\tr(\bF) = 0.
	\end{equation}
	Applying \eqref{eq:Dec} to the
	strain equation separates it into two equations of quite different character.
	Taking the trace annihilates the Jaumann commutator, whose trace vanishes by
	the cyclic property, and removes the stretching term as well, since
	$\tr\bDv=\dv\bv=0$; what survives is the closed scalar equation
	\[
	\partial_t q + \bv\cdot\nabla q + \alpha q = 3\beta .
	\]
	The deviatoric part retains both nonlinearities; since $\bI$ commutes with
	$\bWv$, $\dev(\be\bWv-\bWv\be)=\bF\bWv-\bWv\bF$.
	The system \eqref{model+} thus becomes
	\begin{equation}\label{Dmodel}
		\left\{
		\begin{aligned}
			&\rho \partial_t \bv + \rho (\bv \cdot \nabla) \bv + \nabla \pi = \dv(\mu_1 \bD(\bv) + \lambda \bF) + \bff && \quad\text{in } \som ,\\[0.5em]
			&\partial_t q  + \bv \cdot \nabla q + \alpha q = 3\beta && \quad\text{in } \som ,\\[0.5em]
			&\partial_t \bF  + (\bv \cdot \nabla) \bF + \bF \bW(\bv) - \bW(\bv) \bF + (q - 1) \bD(\bv) + \alpha \bF = 0 && \quad\text{in } \som ,\\[0.5em] 
			&\dv \bv = 0 && \quad\text{in } \som ,\\[0.5em]
			&\bv(0) = \bv_0,\,q(0)=q_0 = \tr(\be_0),\, \bF(0) =\bF_0=\dev\,\be_0 && \quad\text{in }\om,\\[0.5em]
			&\bv=0 && \quad\text{on } S\times\partial\om,\\
		\end{aligned}
		\right.
		\tag{\text{{\bf DM}}}
	\end{equation}
	where the isotropic part of the elastic stress has been absorbed into the
	modified pressure
	\[
	\pi := p - \Bigl( \frac{\lambda}{3} + 2\mu \Bigr) q .
	\]
	
	\begin{proposition}[Decomposed equations]\label{thm:decomposed}
		Let \((\bv, \be)\) solve \eqref{model+}. Then \((\bv, q, \bF)\) solves \eqref{Dmodel}. The converse also holds, with $(\bv,\be)$ defined from $(\bv,q,\bF)$ using \eqref{eq:Dec}.
	\end{proposition}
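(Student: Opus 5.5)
The proof is a direct algebraic computation, carried out in both directions. The forward direction rests on the symmetry of $\be$ from Lemma~\ref{symmetry_E}. I take the symmetry of $\be_0$ as part of the standing assumptions, since the decomposition \eqref{eq:Dec} presupposes it. Granting this, set $q=\tr(\be)$ and $\bF=\be-\tfrac q3\bI$. Then $\bF$ is symmetric and trace-free, so $\bF$ takes values in $\bbS_0$, and $\be=\bF+\tfrac q3\bI$.

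\textbf{Momentum equation.} I substitute the decomposition into \eqref{model+}$_3$ and use $\dv\bv=0$. This gives
\[
\bm\sigma=\mu_1\bDv+\lambda\bF+\Bigl(\tfrac\lambda3+2\mu\Bigr)q\bI .
\]
Hence $\dv\bm\sigma=\dv(\mu_1\bDv+\lambda\bF)+\nabla\bigl((\tfrac\lambda3+2\mu)q\bigr)$. Moving the gradient term to the left and combining it with $\nabla p$ produces $\nabla\pi$, with $\pi$ defined as in the statement. This is \eqref{Dmodel}$_1$.

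\textbf{Trace equation.} I take the trace of \eqref{model+}$_2$ with $\bg=\alpha\be-\beta\bI$. The trace commutes with $\p_t$ and with $(\bv\cdot\nabla)$. It annihilates the commutator, since $\tr(\be\bWv)=\tr(\bWv\be)$. It also annihilates the stretching term, since $\tr\bDv=\dv\bv=0$. What remains is $\p_tq+\bv\cdot\nabla q=-\alpha q+3\beta$, which is \eqref{Dmodel}$_2$.

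\textbf{Deviatoric equation.} I subtract $\tfrac13$ of the trace equation times $\bI$ from \eqref{model+}$_2$. The transport part becomes $\p_t\bF+(\bv\cdot\nabla)\bF$. In the commutator, the identity part drops out because $\bI\bWv-\bWv\bI=0$, so it equals $\bF\bWv-\bWv\bF$. The stretching term $(q-1)\bDv$ is already trace-free and is unchanged. On the right-hand side,
\[
-\alpha\be+\beta\bI+\tfrac13(\alpha q-3\beta)\bI=-\alpha\bF .
\]
This yields \eqref{Dmodel}$_3$.

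\textbf{Initial and boundary data.} The initial data transform as $q_0=\tr\be_0$ and $\bF_0=\dev\be_0$. The incompressibility and no-slip conditions are identical in both systems.

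\textbf{Converse.} Given $(\bv,q,\bF)$ solving \eqref{Dmodel}, I define $\be:=\bF+\tfrac q3\bI$ and $p:=\pi+(\tfrac\lambda3+2\mu)q$. I then add $\tfrac13\bI$ times \eqref{Dmodel}$_2$ to \eqref{Dmodel}$_3$. Every identity used above is an equality of expressions, so the steps reverse and recover \eqref{model+}$_2$ with $\bg=\alpha\be-\beta\bI$. The stress identity likewise gives back \eqref{model+}$_1$. The initial data also match, because $\be_0=\dev\be_0+\tfrac13(\tr\be_0)\bI$.

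The only step I expect to need care is the forward use of symmetry. The splitting is always defined, but the $\bbS_0$-valuedness of $\bF$ requires Lemma~\ref{symmetry_E}. Everything else is routine, and it holds for solutions regular enough that the traces and products make sense.
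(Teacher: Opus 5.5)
Your proposal is correct and follows the same route as the paper, namely the derivation given just before the proposition. You take the trace, where the commutator vanishes by cyclicity and the stretching term by $\dv\bv=0$. You take the deviatoric part, using that $\bI$ commutes with $\bWv$. You then absorb $(\tfrac{\lambda}{3}+2\mu)q\bI$ into the modified pressure $\pi$.

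The one point worth making explicit is that your converse uses $\tr\be=q$, i.e.\ $\tr\bF=0$, both in the stretching term and in the stress. This is built into \eqref{eq:Dec}. It also follows directly from \eqref{Dmodel}: taking the trace of \eqref{Dmodel}$_3$ gives a homogeneous transport--relaxation equation for $\tr\bF$ with zero initial datum.
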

	
	The equation for $q$ is now a linear transport--relaxation equation, decoupled from $\bF$, and its
	solutions inherit pointwise bounds from the initial data.
	These are the bounds under
	which the coupling term $(q-1)\bDv$ is controlled, and we prove them in
	Lemma \ref{lem:qbdd} by a truncation argument, which uses only the energy regularity of the flow. The scalar equation is also amenable to the DiPerna--Lions commutator
	technique \cite{DiPernaLions1989}; see Theorem~\ref{thm:qstrong}.
	
	The decomposition does not improve the regularity of either
	strain variable: the energy estimates still give only
	$q \in L^\infty(0,T;L^2(\om))$ and
	$\bF\in L^\infty(0,T;L^2(\om;\bbS_0))$. We therefore add parabolic terms to
	both strain equations, obtaining the regularized system
	\begin{equation}\label{Rmodel+}
		\left\{
		\begin{aligned}
			&\rho \partial_t \bv^\e + \rho (\bv^\e \cdot \nabla) \bv^\e + \nabla \pi^\e = \dv(\mu_1 \bD(\bv^\e) + \lambda \bF^\e) + \bff && \quad\text{in } \som ,\\[0.5em]
			&\partial_t q^\e - \e \Delta q^\e + \bv^\e \cdot \nabla q^\e + \alpha q^\e = 3\beta && \quad\text{in } \som ,\\[0.5em]
			&\partial_t \bF^\e - \e \Delta \bF^\e + (\bv^\e \cdot \nabla) \bF^\e + \bF^\e \bW(\bv^\e) - \bW(\bv^\e) \bF^\e + (q^\e - 1) \bD(\bv^\e) + \alpha \bF^\e = 0 && \quad\text{in } \som ,\\[0.5em] 
			&\dv \bv^\e = 0 && \quad\text{in } \som ,\\[0.5em]
			&\bv^\e(0) = \bv_0,\,q^\e(0)=q_0 = \tr(\be_0),\, \bF^\e(0) =\bF_0=\dev\,\be_0 && \quad\text{in }\om,\\[0.5em] 
			&\bv^\e=0,\, \e \nabla q^\e\cdot\bm{n}=0,\, \e \nabla\bF^\e\cdot\bm{n}=0 && \quad\text{on } S\times\partial\om,\\
		\end{aligned}
		\right.
		\tag{\text{{\bf RM}}}
	\end{equation}
	for which weak solutions can be constructed by a Galerkin scheme and the
	Aubin--Lions lemma.
	
	\begin{remark}[The regularization as a weakly nonlocal remodeling law]
		\label{rem:Geps}
		The passage from \eqref{Dmodel} to \eqref{Rmodel+} is a change in the
		remodeling law of the original model, as announced in Section
		\ref{sec:intro}. Replacing \eqref{eq:intro-G} by
		\begin{equation}\label{eq:Geps}
			\bg_\e := \alpha\be - \beta\bI - \e\,\Delta\be ,
		\end{equation}
		and imposing $\e\nabla\be\cdot\bm{n}=0$ on $S\times\partial\om$, the strain
		equation of \eqref{model+} reads
		\[
		\partial_t\be - \e\Delta\be + (\bv\cdot\nabla)\be + \be\bWv - \bWv\be
		+ \bigl(\tr\be - 1\bigr)\bDv + \alpha\be - \beta\bI = 0 .
		\]
		Since $\tr(\Delta\be)=\Delta q$ and $\operatorname{dev}(\Delta\be)=\Delta\bF$,
		the Laplacian passes through the decomposition and produces
		\eqref{Rmodel+}$_2$ and \eqref{Rmodel+}$_3$, while
		$\e\nabla\be\cdot\bm{n}=0$ gives the two Neumann conditions in
		\eqref{Rmodel+}$_6$. Moreover
		$\bg_\e^\top-\bg_\e=\alpha(\be^\top-\be)-\e\Delta(\be^\top-\be)$, so the strain
		remains symmetric for every $\e>0$: the proof of Lemma~\ref{symmetry_E} gives
		$\|\bA(t)\|_{L^2(\om)}\le e^{-\alpha t}\|\bA(0)\|_{L^2(\om)}$, with the
		additional nonnegative term $\e\|\nabla\bA\|^2_{L^2(\om)}$ on the left; the
		boundary term vanishes by the Neumann condition.
		
		The law \eqref{eq:Geps} is variational. The affine law \eqref{eq:intro-G} is
		the derivative $\bg=\partial_{\be}\Psi$ of the quadratic remodeling potential
		$\Psi(\be)=\frac{\alpha}{2}|\be|^2-\beta\,\tr\be$, minimized by the
		homeostatic strain $(\beta/\alpha)\bI$; since $\bg$ is a rate, $\Psi$ is a
		rate potential. Adding a penalty on spatial variation of
		the relaxed configuration gives
		\[
		\Psi_\e(\be,\nabla\be)=\frac{\alpha}{2}|\be|^2-\beta\,\tr\be
		+\frac{\e}{2}|\nabla\be|^2 ,
		\]
		whose variational derivative
		$\partial_{\be}\Psi_\e-\operatorname{div}\bigl(\partial_{\nabla\be}\Psi_\e\bigr)$
		is \eqref{eq:Geps}, with the natural boundary condition
		$\e\nabla\be\cdot\bm{n}=0$, since nothing on $\partial\om$ is penalized; for
		conditions of this type derived from a microforce balance we refer to Gurtin
		\cite{Gurtin1996}. Mechanically, remodeling then drives the strain both
		toward the homeostatic state and toward spatial uniformity, so the relaxed
		configurations of neighboring material points are coupled. Since
		$[\alpha]=(\text{time})^{-1}$ and $[\e]=(\text{length})^2(\text{time})^{-1}$,
		this coupling has the intrinsic length $\ell:=\sqrt{\e/\alpha}$, and
		\eqref{eq:Geps} is weakly nonlocal in the sense of \cite{Maugin1979}.
		
		For fixed $\e>0$, \eqref{Rmodel+} is thus itself a model with weakly
		nonlocal remodeling, and the limit $\e\to0$ studied in
		Section~\ref{sec:sec04} is the idealization in which $\ell$ is small compared
		with the length on which the strain varies. The law \eqref{eq:Geps} is an
		interpretation of the regularization; we do not derive it from a
		micromechanical model or identify $\ell$ with a specific tissue length scale.
		Penalizing the volumetric and deviatoric parts separately would give distinct
		coefficients $\e_1,\e_2$ in \eqref{Rmodel+}$_2$ and \eqref{Rmodel+}$_3$; since
		$q^\e$ and $\bF^\e$ are estimated separately, the analysis is unchanged
		provided both tend to zero.
	\end{remark}
	
	\section{Existence of weak solutions for the regularized model \texorpdfstring{\eqref{Rmodel+}}{(RM)}}\label{sec-existence}
	
	We assume the following:
	\begin{assumption}
		\textbf{H1.}
		\assumptionlabel{H1}{H1}
		Let $\be_0 \in L^2(\om;\bbS)$ be such that
		$$0<c_0\leq \tr(\be_0)\leq c_1\quad \text{in $\om$}.$$ 
	\end{assumption}
	\begin{assumption}
		\textbf{H2.}
		\assumptionlabel{H2}{H2}
		There exist $\alpha>0$ and $\beta\geq 0$ such that
		$$\bg=\alpha\be-\beta\bi\quad \text{in $ \som $}.$$
	\end{assumption}
	\begin{assumption}
		\textbf{H3.}
		\assumptionlabel{H3}{H3}
		Let $\f\in L^2(S;H^{-1}(\O))^3$ and  $\bv_0 \in L^2(\om)^3$ with $\div\, \bv_0=0$ in the sense of distributions.
	\end{assumption}

	Under \ref{H1}--\ref{H3}, the admissible space is
	$$\Uc:= \Vc\times\Qc\times\Ec,$$
	where the sets of admissible velocity and strain tensors are given by 
	\begin{align*}
		\Vc
		&:=L^\infty(S;L^2(\O)^3)
		\cap L^2(S;H^1_{0,\div}(\O)^3)
		\cap W^{1,4/3}\bigl(S;(H^1_{0,\div}(\O)^3)^\ast\bigr),\\
		\Qc
		&:=L^\infty(S;L^2(\O))
		\cap L^2(S;H^1(\O))
		\cap L^\infty(S\times\O)
		\cap W^{1,2}\bigl(S;(H^1(\O))^\ast\bigr),\\
		\Ec
		&:=L^\infty(S;L^2(\O;\bbS_0))
		\cap L^2(S;H^1(\O;\bbS_0))
		\cap W^{1,4/3}\bigl(S;(H^1(\O;\bbS_0))^\ast\bigr).
	\end{align*}
	\begin{definition}[Weak solution]\label{Def_Weak}
		For a fixed $\e>0$, the triplet $(\bv^\e,q^\e,\bF^\e)\in\Uc$ is called a weak solution of \eqref{Rmodel+} if it satisfies the initial conditions
		$$\bv^\e(0) = \bv_0, \quad q^\e(0) = \tr(\be_0), \quad \bF^\e(0) = \dev \be_0\quad\text{in }\om$$
		and, for all $\vp\in L^4(S;H^1_{0,\div}(\O)^3)$,
		$\phi\in L^2(S;H^1(\O))$, $\bPS\in L^4(S;H^1(\O;\bbS_0))$, the following hold: 
		\begin{equation}\label{dweak1}
			\rho \ins \langle \partial_t \bv^\e, \vp \rangle - \rho  \inso (\bv^\e  \otimes \bv^\e) : \nabla \vp  + \mu_1  \inso \bD(\bv^\e) : \bD(\vp)
			+ \lambda  \inso \bF^\e : \bD(\vp)  =  \ins\langle \bff, \vp \rangle ,
		\end{equation}
		\begin{equation}\label{dweak2}
			\ins\langle \partial_t q^\e, \phi \rangle + \e  \inso \nabla q^\e \cdot \nabla \phi  -  \inso q^\e \bv^\e \cdot \nabla \phi
			+ \alpha  \inso q^\e \phi = 3\beta\inso \phi ,
		\end{equation}
		
		\begin{multline}\label{dweak3}
			\ins\langle \partial_t \bF^\e, \bPS \rangle + \e  \inso \nabla \bF^\e : \nabla \bPS  -  \inso \bF^\e : (\bv^\e \cdot \nabla) \bPS  +  \inso (q^\e - 1) \bD(\bv^\e) : \bPS \\
			+ \inso (\bF^\e \bW(\bv^\e) - \bW(\bv^\e) \bF^\e) : \bPS + \alpha  \inso \bF^\e : \bPS  = 0.
		\end{multline}
	\end{definition}
	By \eqref{Rmodel+}$_4$ and $\bv^\e=\bm 0$ on $\partial\om$, the transport terms produce no boundary terms.
	
	\begin{remark}[Embeddings]\label{lem:embed}
		For a bounded Lipschitz domain $\O\subset\R^3$,
		\begin{equation}\label{embed_1}
			H^1(\O)\hookrightarrow L^p(\O)\ (1\le p\le6),\qquad
			H^1(\O)\hookrightarrow\hookrightarrow L^p(\O)\ (1\le p<6),
		\end{equation}
		and the same holds for $H^1_0(\O)$. Moreover,
		\begin{equation}\label{embed_2}
			H^2(\O)\hookrightarrow L^\infty(\O).
		\end{equation}
		Since $\bbS_0\cong\R^5$, these embeddings hold componentwise for
		$\bbS_0$-valued functions.
	\end{remark}

	\begin{theorem}
		\label{Thm_existence}
		There exists a weak solution $(\bv^\e,q^\e,\bF^\e)\in\Uc$ of \eqref{Rmodel+}
		in the sense of Definition~\ref{Def_Weak}. Moreover, this solution satisfies
		the energy relations
		\begin{align}
			\label{eq:EEV}
			\frac{\rho}{2}\|\bv^\e(t)\|^2_{L^2(\O)}
			+ \mu_1\|\bD(\bv^\e)\|^2_{L^2(Q_t)}
			+\lambda \int_{Q_t} \bF^\e:\bD(\bv^\e)\,dx\,ds
			\leq\frac{\rho}{2}\|\bv_0\|^2_{L^2(\O)}
			+ \int_0^t\langle\bff,\bv^\e\rangle\,ds,\\
			\label{eq:EEQ}
			\frac{1}{2}\|q^\e(t)\|^2_{L^2(\O)}
			+\e\|\nabla q^\e\|^2_{L^2(Q_t)}
			+\alpha\|q^\e\|^2_{L^2(Q_t)}
			-3\beta \int_{Q_t} q^\e\,dx\,ds
			=\frac{1}{2}\|q_0\|^2_{L^2(\O)},\\
			\label{eq:EEF}
			\frac{1}{2}\|\bF^\e(t)\|^2_{L^2(\O)}
			+\alpha\|\bF^\e\|^2_{L^2(Q_t)}
			+\e\|\nabla\bF^\e\|^2_{L^2(Q_t)}
			+\int_{Q_t}(q^\e-1)\,\bD(\bv^\e):\bF^\e\,dx\,ds
			\leq\frac{1}{2}\|\bF_0\|^2_{L^2(\O)},
		\end{align}
		where \eqref{eq:EEQ} holds for every $t\in[0,T]$ and \eqref{eq:EEV},
		\eqref{eq:EEF} hold for a.e.\ $t\in(0,T)$.
	\end{theorem}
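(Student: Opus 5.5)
We construct the solution by a Faedo--Galerkin scheme in which the coefficient $q-1$ of the stretching term is truncated, and we remove the truncation at the end. Fix $M>\max\{c_1,3\beta/\alpha\}+1$ and let $T_M(s):=\max\{-M,\min\{s,M\}\}$. Choose three bases: eigenfunctions $\{\bm w_k\}$ of the Stokes operator in $H^1_{0,\div}(\om)^3$; Neumann eigenfunctions $\{\phi_k\}$ of $-\Delta$ in $H^1(\om)$; and Neumann eigenfunctions $\{\bPS_k\}$ of $-\Delta$ in $H^1(\om;\bbS_0)$. The last are obtained componentwise in an orthonormal basis of $\bbS_0\cong\R^5$; on a Lipschitz domain we only use that they form an $L^2$-orthonormal, $H^1$-dense family. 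We seek $(\bv_n,q_n,\bF_n)$ in the span of the first $n$ elements solving the projected versions of \eqref{dweak1}--\eqref{dweak3}, with $(q_n-1)$ replaced by $T_M(q_n)-1$ in \eqref{dweak3}. The initial data are the $L^2$-projections of $\bv_0,q_0,\bF_0$. Carathéodory's theorem gives a local solution of the resulting ODE system, since the right-hand side is locally Lipschitz. The a priori bounds below extend it to $[0,T]$.

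The energy estimates at the Galerkin level are obtained as follows. Testing the velocity equation by $\bv_n$ kills the convective term ($\dv\bv_n=0$). Testing the $\bF_n$ equation by $\bF_n$ kills both the transport term and the Jaumann commutator, the latter by the cyclic trace identity used in Proposition~\ref{symmetry_E_general}, now with $\bF_n$ symmetric. Testing the $q_n$ equation by $q_n$ gives \eqref{eq:EEQ} at level $n$. The coupling terms $\lambda\int\bF_n:\bD(\bv_n)$ and $\int(T_M(q_n)-1)\bD(\bv_n):\bF_n$ do not cancel. However, the latter is bounded by $(M+1)\|\bD(\bv_n)\|\,\|\bF_n\|$, so Young's inequality absorbs $\tfrac{\mu_1}{4}\|\bD(\bv_n)\|^2$ and leaves $C_M\|\bF_n\|^2$. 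Adding the two identities and applying Korn and Gronwall yields bounds, uniform in $n$ (depending on $M,\e$), for
\begin{itemize}
\item $\bv_n$ in $L^\infty L^2\cap L^2H^1$;
\item $q_n,\bF_n$ in $L^\infty L^2\cap L^2H^1$.
\end{itemize}
Time-derivative bounds follow by duality. The term $\bv_n\otimes\bv_n\in L^{4/3}L^2$ in 3D gives $\partial_t\bv_n\in L^{4/3}(H^1_{0,\div})^\ast$. For $\bF_n$, the terms $\bF_n\otimes\bv_n$, $\bF_n\bW(\bv_n)$ and $(T_M(q_n)-1)\bD(\bv_n)$ are controlled by interpolation in $L^{4/3}$ against $H^1$ test functions in $L^4$. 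For $q_n$ we first need $q_n\in L^\infty$: since the $q$-equation is linear in $q$ and independent of $\bF$, we will instead derive $L^2(H^1)^\ast$ bounds from $q_n\bv_n\in L^2L^{3/2}$ if the $L^\infty$ bound is available only in the limit, and upgrade afterwards. Aubin--Lions then gives strong convergence of $\bv_n,q_n,\bF_n$ in $L^2L^2$, which suffices to pass to the limit in every nonlinear term, including $T_M(q_n)\to T_M(q)$ by Lipschitz continuity. Note that the truncation is what makes the $q$-dependent coefficient bounded.

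The main step is removing the truncation. In the limit, $q$ solves the untruncated linear equation \eqref{dweak2}, because that equation never contained $T_M$. We then prove the maximum principle $\min\{c_0,3\beta/\alpha\}e^{-\alpha t}\le q\le \max\{c_1,3\beta/\alpha\}$ as follows. Test with $(q-k)_+$ for $k=\max\{c_1,3\beta/\alpha\}$; this test function is admissible since $q\in L^2H^1$ with $\partial_tq\in L^2(H^1)^\ast$ (this needs to be checked via the regularity available, $q\bv\in L^2L^{3/2}$ with $H^1\subset L^6$, so $L^2(H^1)^\ast$ holds). The transport term vanishes because $\int\bv\cdot\nabla(q-k)_+\,(q-k)_+=0$, and the diffusion term is nonnegative. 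The reaction term yields $\alpha\int(q-k)(q-k)_+\ge0$ minus $3\beta\int(q-k)_+$, which combine to a nonnegative quantity since $\alpha k\ge3\beta$. Symmetrically, test with $(q+k')_-$. The only delicacy is justifying the chain rule in this weak setting, which I expect to be the technical obstacle; it follows from the Lions--Magenes-type lemma for the pair $L^2H^1$ and $L^2(H^1)^\ast$. With $|q|<M$, we get $T_M(q)=q$, so the limit solves \eqref{dweak3} and $q\in L^\infty$.

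The energy relations follow as stated. \eqref{eq:EEQ} holds with equality for every $t$ by the same chain rule, because $q\in C([0,T];L^2)$. \eqref{eq:EEV} and \eqref{eq:EEF} follow from the Galerkin identities by weak lower semicontinuity of the norms. The cross terms pass to the limit by strong $L^2$ convergence of $\bF_n$ and $q_n$ paired with weak convergence of $\bD(\bv_n)$, using also the uniform bound on $T_M(q_n)$. This gives inequalities for a.e. $t$. Symmetry and tracelessness of $\bF$ are preserved because the basis lies in $\bbS_0$.
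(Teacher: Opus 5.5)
The gap is in the step you call the main one, the maximum principle for the limit $q$.

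\textbf{Why the step fails.} Because you leave the transport term of the scalar equation untruncated, the limit $q$ is known only to lie in $L^\infty(S;L^2(\om))\cap L^2(S;H^1(\om))$ when you attempt the maximum principle. Your claim that $\partial_t q\in L^2(S;(H^1(\om))^\ast)$ then holds is false. The transport part of $\partial_t q$ acts on $\phi\in H^1(\om)$ by $\int_\om q\bv\cdot\nabla\phi$, with $\nabla\phi$ only in $L^2(\om)$. You would therefore need $q\bv\in L^2(S;L^2(\om))$. The bound $q\bv\in L^2(S;L^{3/2}(\om))$ only gives $\dv(q\bv)\in L^2(S;(W^{1,3}(\om))^\ast)$.

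From the energy class, H\"older and interpolation give at best $q\bv\in L^{4/3}(S;L^2(\om))$. The product sits at the level $2/r+3/s=3$, whereas $L^2(S;L^2)$ is at $5/2$. Hence you only get $\partial_t q\in L^{4/3}(S;(H^1)^\ast)$, which cannot be paired with $(q-k)_+\in L^2(S;H^1)$. For the same reason $q\bv\cdot\nabla(q-k)_+$ is not known to lie in $L^1(\som)$. So neither the Lions--Magenes chain rule nor the cancellation of the transport term is available.

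The argument is circular: the regularity you need to prove $q\in L^\infty$ holds only once $q\in L^\infty$ is known. The same false claim appears in your dual bound for $q_n$; there it is harmless, since $L^{4/3}(S;(H^1)^\ast)$ suffices for Aubin--Lions.

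\textbf{The missing idea.} Truncate the coefficient in the transport term of the scalar equation as well, as the paper does in \eqref{dgweak2} with $\mathscr T(q_n)\bv_n\cdot\nabla\phi$.
\begin{enumerate}
\item The energy estimate is unchanged, since $\int_\om\mathscr T(q_n)\bv_n\cdot\nabla q_n=\int_\om\bv_n\cdot\nabla\Phi(q_n)=0$ with $\Phi'=\mathscr T$.
\item The bound $|\mathscr T(q_n)\bv_n|\le c_3|\bv_n|$ controls $\partial_t q_n$ in $L^2(S;(H^1(\om))^\ast)$ uniformly in $n$.
\item The limiting truncated equation therefore admits the chain rule for $(q-b)^{\pm}$. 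Its transport term vanishes because $\mathscr T(q)\nabla(q-b)^+=\nabla\Phi(\max\{q,b\})$, with every integrand now integrable since $\mathscr T$ is bounded.
\item Only after the pointwise bound is proved is the truncation removed from both terms.
\end{enumerate}
Your symmetric $T_M$ with large $M$ would work just as well for this. The alternative is an approximation of the scalar equation in which the maximum principle already holds before the limit, e.g.\ solving it exactly for the Galerkin velocity.

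The rest of your proposal follows the paper's proof: the truncated coupling, the energy and dual estimates, compactness, and the lower-semicontinuity derivation of the energy relations.
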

	
	\begin{proof} 
		We prove the existence by a Galerkin procedure for a fixed $\e>0$, organized into ten steps.
		
		\textbf{Step 1: Galerkin basis.}
		Since \(H^1_0(\O)^3\), \(H^1(\O)\), and \(H^1(\O;\bbS_0)\) are separable Hilbert spaces and
		\[
		H^1_0(\O)^3\hookrightarrow\hookrightarrow L^2(\O)^3,\qquad
		H^1(\O)\hookrightarrow\hookrightarrow L^2(\O),
		\qquad
		H^1(\O;\bbS_0)\hookrightarrow\hookrightarrow L^2(\O;\bbS_0),
		\]
		are compact embeddings, and since \(H^1_{0,\div}(\O)^3\) is a closed subspace of \(H^1_0(\O)^3\), there exist countable families
		\[
		\{\vp_i\}_{i\in\mathbb N}\subset H^1_{0,\div}(\O)^3,\qquad
		\{\phi_i\}_{i\in\mathbb N}\subset H^1(\O),
		\qquad
		\{\bPS_i\}_{i\in\mathbb N}\subset H^1(\O;\bbS_0),
		\]
		which are orthonormal and complete in the corresponding \(L^2\)-spaces. Moreover, these families may be chosen so that their finite-dimensional spans are dense in the corresponding energy spaces. 
		
		For further details on the construction of divergence-free bases and Galerkin approximations for incompressible flows, we refer to Temam~\cite{temam1984} and Ladyzhenskaya et al.~\cite{ladyzhenskaya1968}.
		
		\textbf{Step 1.1: Truncated Galerkin approximation.}
		We truncate the scalar variable where it acts as a coefficient. This is needed because, at the Galerkin level, an \(L^\infty\)-bound for \(q_n^\e\) is not yet available, and hence the coefficient \(q_n^\e-1\) in the coupling term cannot be controlled uniformly with respect to \(n\).
		
		Let
		\[
		c_2:=\begin{cases}
			\min\Big\{c_0,\dfrac{3\beta}{\alpha}\Big\}, & \beta>0,\\[2mm]
			c_0e^{-\alpha T}, & \beta=0,
		\end{cases}
		\qquad
		c_3:=\max\Big\{c_1,\frac{3\beta}{\alpha}\Big\},
		\]
		and define the truncation map \(\mathscr T:\R\to\R\) by
		\[
		\mathscr T(s):=\min\{c_3,\max\{c_2,s\}\}.
		\]
		Equivalently,
		\begin{equation}\label{trunc_def}
			\mathscr T(s)=
			\begin{cases}
				c_2, & s<c_2,\\
				s, & c_2\le s\le c_3,\\
				c_3, & s>c_3.
			\end{cases}
		\end{equation}
		Then
		\begin{equation}\label{trunc_bdd}
			|\mathscr T(s)|\le c_3,\qquad
			|\mathscr T(s)-\mathscr T(r)|\le |s-r|,
			\qquad
			|\mathscr T(s)-1|\le c_3+1,
			\qquad r,s\in\R,
		\end{equation}
		and
		\begin{equation}\label{trunc_id}
			\mathscr T(s)=s
			\qquad\text{for all }s\in[c_2,c_3].
		\end{equation}
		The truncation is applied only to the coefficients in both the scalar and tensor equations. After passing to the Galerkin limit and proving the maximum principle for the limiting scalar equation, we shall have \(\mathscr T(q^\e)=q^\e\) a.e. in \(S\times\O\), and hence recover the original regularized system.
		
		For \(n\in\mathbb N\), we look for approximate solutions of the form
		\[
		\bvne(x,t)=\sum_{i=1}^n a_i^n(t)\vp_i(x),\qquad
		\qne(x,t)=\sum_{i=1}^n d_i^n(t)\phi_i(x),\qquad
		\bFne(x,t)=\sum_{i=1}^n b_i^n(t)\bPS_i(x),
		\]
		where the real-valued coefficients $a_i^n$, $d_i^n$, $b_i^n$ are determined by the following finite-dimensional system: for every
		\[
		\vp\in\operatorname{span}\{\vp_1,\ldots,\vp_n\},\qquad
		\phi\in\operatorname{span}\{\phi_1,\ldots,\phi_n\},\qquad
		\bPS\in\operatorname{span}\{\bPS_1,\ldots,\bPS_n\},
		\]
		and for a.e. \(t\in S\),
		\begin{align}
			&\rho\langle \partial_t\bvne,\vp\rangle
			-\rho\ino(\bvne \otimes\bvne):\nabla\vp
			+\mu_1\ino\bD(\bvne):\bD(\vp)
			+\lambda\ino\bFne:\bD(\vp)
			=\langle\bff,\vp\rangle,
			\label{dgweak1}\\
			&\langle\partial_t\qne,\phi\rangle
			+\e\ino\nabla\qne\cdot\nabla\phi
			{ -\ino \mathscr T(\qne)\bvne\cdot\nabla\phi}
			+\alpha\ino\qne\phi
			=3\beta\ino\phi,
			\label{dgweak2}\\
			&\langle\partial_t\bFne,\bPS\rangle
			+\e\ino\nabla\bFne:\nabla\bPS
			-\ino\bFne:(\bvne\cdot\nabla)\bPS
			+\ino(\mathscr T(\qne)-1)\bD(\bvne):\bPS
			\notag\\
			&\hspace{5cm}
			+\ino\bigl(\bFne\bW(\bvne)-\bW(\bvne)\bFne\bigr):\bPS
			+\alpha\ino\bFne:\bPS
			=0.
			\label{dgweak3}
		\end{align}
		The initial data are taken as the corresponding \(L^2\)-orthogonal projections:
		\[
		\bvne(0)=\xi_n\bv_0,\qquad
		\qne(0)=\chi_n q_0,\qquad
		\bFne(0)=\zeta_n\bF_0,
		\]
		where \(\xi_n,\chi_n,\zeta_n\) denote the \(L^2\)-orthogonal projections onto
		\[
		\operatorname{span}\{\vp_1,\ldots,\vp_n\},\qquad
		\operatorname{span}\{\phi_1,\ldots,\phi_n\},\qquad
		\operatorname{span}\{\bPS_1,\ldots,\bPS_n\},
		\]
		respectively.

		\textbf{Step 2: Galerkin ODE system.}
		Writing
		\[
		\ba^n(t):=(a_1^n(t),\ldots,a_n^n(t)),\qquad
		\bd^n(t):=(d_1^n(t),\ldots,d_n^n(t)),\qquad
		\bb^n(t):=(b_1^n(t),\ldots,b_n^n(t)),
		\]
		the system \eqref{dgweak1}--\eqref{dgweak3} is equivalent to a finite-dimensional ODE system
		\begin{equation}\label{d_ode}
			\frac{d}{dt}\bX(t)=\Theta(t,\bX(t)),
			\qquad
			\bX(t):=(\ba^n(t),\bd^n(t),\bb^n(t))\in\R^{3n},
		\end{equation}
		with initial condition
		\[
		\bX(0)=\bigl(\ba^n(0),\bd^n(0),\bb^n(0)\bigr).
		\]
		Here the vector field \(\Theta\) is obtained from the finite-dimensional forms appearing in \eqref{dgweak1}--\eqref{dgweak3}. The terms coming from the Stokes operator, the diffusion operators, the damping terms, and the linear coupling with \(\bFne\) are linear in \(\bX\). The convective terms and the Jaumann term are polynomial in the coefficients. The truncated coupling term
		\[
		(\bd^n,\ba^n)\mapsto
		\int_\O
		\mathscr T\left(\sum_{i=1}^n d_i^n\phi_i\right)
		\bD\left(\sum_{j=1}^n a_j^n\vp_j\right):\bPS_k\,dx
		\]
		is locally Lipschitz with respect to \((\bd^n,\ba^n)\), because \(\mathscr T\) is globally Lipschitz by \eqref{trunc_bdd}. Moreover,
		\[
		t\mapsto \bigl(\langle\bff(t),\vp_1\rangle,\ldots,\langle\bff(t),\vp_n\rangle\bigr)
		\]
		belongs to \(L^2(0,T;\R^n)\subset L^1(0,T;\R^n)\). Hence \(t\mapsto\Theta(t,\bX)\) is measurable for each fixed \(\bX\), and \(\bX\mapsto\Theta(t,\bX)\) is locally Lipschitz for a.e. \(t\in S\). Therefore, by the Carathéodory existence theorem \cite[Chapter~1]{coddingtonLevinson1955}, there exists a maximal time \(T_n^\ast\in(0,T]\) and a solution
		\[
		\bX\in W^{1,1}_{\mathrm{loc}}([0,T_n^\ast);\R^{3n})
		\]
		of \eqref{d_ode}. Consequently, $\bvne,\qne,\bFne$ are absolutely continuous in time with values in the corresponding finite-dimensional spaces.
		
		\textbf{Step 3: Energy estimate and extension to \((0,T)\).}
		We test \eqref{dgweak1} by \(\vp=\bvne\). Since \(\dv\bvne=0\) and \(\bvne=0\) on \(\partial\O\), the convective term vanishes:
		\[
		\ino(\bvne \otimes\bvne):\nabla\bvne=0.
		\]
		Thus
		\begin{equation}\label{En_v}
			\frac{\rho}{2}\frac{d}{dt}\|\bvne\|_{L^2(\O)}^2
			+\mu_1\|\bD(\bvne)\|_{L^2(\O)}^2
			+\lambda\ino\bFne:\bD(\bvne)
			=
			\langle\bff,\bvne\rangle .
		\end{equation}
		Since the truncation map $\mathscr{T}$ is Lipschitz continuous, its primitive
		$\Phi(s):=\int_0^s\mathscr{T}(\tau)\,d\tau$ satisfies
		$\Phi\in C^{1,1}(\mathbb{R})$ and $\Phi'=\mathscr{T}$.
		By the chain rule for Sobolev functions,
		$$\nabla\Phi(q_n^\varepsilon)
		=
		\mathscr{T}(q_n^\varepsilon)\nabla q_n^\varepsilon
		\qquad\text{a.e. in }\Omega.$$
		Since $\nabla\cdot\boldsymbol{v}_n^\varepsilon=0$ in $\Omega$ and
		$\boldsymbol{v}_n^\varepsilon=\bm{0}$ on $\partial\O$, we conclude that
		$$\int_{\Omega}\mathscr{T}(q_n^\varepsilon)\,
		\boldsymbol{v}_n^\varepsilon\cdot\nabla q_n^\varepsilon\,dx=
		\int_{\Omega}
		\boldsymbol{v}_n^\varepsilon\cdot
		\nabla\Phi(q_n^\varepsilon)\,dx 
		=-\int_{\Omega}
		(\nabla\cdot\boldsymbol{v}_n^\varepsilon)\,
		\Phi(q_n^\varepsilon)\,dx
		+\int_{\partial\Omega}
		\Phi(q_n^\varepsilon)\,
		(\boldsymbol{v}_n^\varepsilon\cdot\boldsymbol{n})\,dS=0.$$
		Hence, testing \eqref{dgweak2} by \(\phi=\qne\), we obtain
		\begin{equation}\label{En_q}
			\frac12\frac{d}{dt}\|\qne\|_{L^2(\O)}^2
			+\e\|\nabla\qne\|_{L^2(\O)}^2
			+\alpha\|\qne\|_{L^2(\O)}^2
			=
			3\beta\ino\qne .
		\end{equation}
		
		Finally, testing \eqref{dgweak3} by \(\bPS=\bFne\), we use
		\[
		\ino\bFne:(\bvne\cdot\nabla)\bFne=0\quad\text{and}\quad \bigl(\bFne\bW(\bvne)-\bW(\bvne)\bFne\bigr):\bFne=0
		\qquad\text{a.e. in }\O,
		\]
		because \(\bFne\) is symmetric and \(\bW(\bvne)\) is skew-symmetric. Therefore,
		\begin{equation}\label{En_F}
			\frac12\frac{d}{dt}\|\bFne\|_{L^2(\O)}^2
			+\e\|\nabla\bFne\|_{L^2(\O)}^2
			+\alpha\|\bFne\|_{L^2(\O)}^2
			+\ino(\mathscr T(\qne)-1)\bD(\bvne):\bFne
			=0.
		\end{equation}
		
		Adding \eqref{En_v}, \eqref{En_q}, and \eqref{En_F}, we get
		\begin{multline}\label{En_total_1}
			\frac12\frac{d}{dt}
			\left(
			\rho\|\bvne\|_{L^2(\O)}^2
			+\|\qne\|_{L^2(\O)}^2
			+\|\bFne\|_{L^2(\O)}^2
			\right)
			+\mu_1\|\bD(\bvne)\|_{L^2(\O)}^2\\
			+\e\|\nabla\qne\|_{L^2(\O)}^2
			+\alpha\|\qne\|_{L^2(\O)}^2
			+\e\|\nabla\bFne\|_{L^2(\O)}^2
			+\alpha\|\bFne\|_{L^2(\O)}^2\\
			=
			\langle\bff,\bvne\rangle
			+3\beta\ino\qne
			-\lambda\ino\bFne:\bD(\bvne)
			-\ino(\mathscr T(\qne)-1)\bD(\bvne):\bFne .
		\end{multline}
		
		We estimate the terms on the right-hand side. By Korn's inequality, there exists \(C_K>0\) such that
		\[
		\|\bvne\|_{H^1_0(\O)}
		\le C_K\|\bD(\bvne)\|_{L^2(\O)}.
		\]
		Therefore, by Young's inequality,
		\begin{equation}\label{force_est}
			|\langle\bff,\bvne\rangle|
			\le
			C_K\|\bff\|_{H^{-1}(\O)}\|\bD(\bvne)\|_{L^2(\O)}
			\le
			\frac{\mu_1}{4}\|\bD(\bvne)\|_{L^2(\O)}^2
			+
			\frac{C_K^2}{\mu_1}\|\bff\|_{H^{-1}(\O)}^2 .
		\end{equation}
		Moreover,
		\begin{equation}\label{q_source_est}
			3\beta\ino\qne
			\le
			3\beta\,|\O|^{1/2}\|\qne\|_{L^2(\O)}
			\le
			\frac{\alpha}{2}\|\qne\|_{L^2(\O)}^2
			+
			\frac{9\beta^2}{2\alpha}|\O|.
		\end{equation}
		For the coupling terms, set
		\[
		C_{\lambda,q}:=\lambda+c_3+1.
		\]
		Using \eqref{trunc_bdd}, we have
		\begin{multline}
			\left|
			\lambda\ino\bFne:\bD(\bvne)
			+
			\ino(\mathscr T(\qne)-1)\bD(\bvne):\bFne
			\right|\le
			C_{\lambda,q}
			\|\bD(\bvne)\|_{L^2(\O)}
			\|\bFne\|_{L^2(\O)}\\
			\leq
			\frac{\mu_1}{4}\|\bD(\bvne)\|_{L^2(\O)}^2
			+
			\frac{C_{\lambda,q}^2}{\mu_1}
			\|\bFne\|_{L^2(\O)}^2 .
			\label{coupling_est}
		\end{multline}
		Combining \eqref{En_total_1}--\eqref{coupling_est}, we obtain
		\begin{multline}\label{En_total_2}
			\frac12\frac{d}{dt}
			\left(
			\rho\|\bvne\|_{L^2(\O)}^2
			+\|\qne\|_{L^2(\O)}^2
			+\|\bFne\|_{L^2(\O)}^2
			\right)
			+\frac{\mu_1}{2}\|\bD(\bvne)\|_{L^2(\O)}^2
			+\e\|\nabla\qne\|_{L^2(\O)}^2
			+\frac{\alpha}{2}\|\qne\|_{L^2(\O)}^2
			\\
			+\e\|\nabla\bFne\|_{L^2(\O)}^2
			+\alpha\|\bFne\|_{L^2(\O)}^2
			\le
			\frac{C_K^2}{\mu_1}\|\bff\|_{H^{-1}(\O)}^2
			+
			\frac{9\beta^2}{2\alpha}|\O|
			+
			\frac{C_{\lambda,q}^2}{\mu_1}\|\bFne\|_{L^2(\O)}^2 .
		\end{multline}
		Integrating \eqref{En_total_2} over \((0,t)\), with \(0<t<T_n^\ast\), gives
		\begin{multline}\label{En_total_3}
			\frac12
			\left(
			\rho\|\bvne(t)\|_{L^2(\O)}^2
			+\|\qne(t)\|_{L^2(\O)}^2
			+\|\bFne(t)\|_{L^2(\O)}^2
			\right)
			+\frac{\mu_1}{2}\int_0^t\|\bD(\bvne)\|_{L^2(\O)}^2\,ds
			\\
			+\e\int_0^t\|\nabla\qne\|_{L^2(\O)}^2\,ds
			+\frac{\alpha}{2}\int_0^t\|\qne\|_{L^2(\O)}^2\,ds
			+\e\int_0^t\|\nabla\bFne\|_{L^2(\O)}^2\,ds
			+\alpha\int_0^t\|\bFne\|_{L^2(\O)}^2\,ds
			\\
			\le
			\frac12
			\left(
			\rho\|\bvne(0)\|_{L^2(\O)}^2
			+\|\qne(0)\|_{L^2(\O)}^2
			+\|\bFne(0)\|_{L^2(\O)}^2
			\right)
			\\
			+\frac{C_K^2}{\mu_1}\int_0^t\|\bff(s)\|_{H^{-1}(\O)}^2\,ds
			+\frac{9\beta^2}{2\alpha}|\O|\,t
			+\frac{C_{\lambda,q}^2}{\mu_1}\int_0^t\|\bFne(s)\|_{L^2(\O)}^2\,ds .
		\end{multline}
		In particular, if
		\[
		\mathscr E_n^\e(t):=
		\rho\|\bvne(t)\|_{L^2(\O)}^2
		+\|\qne(t)\|_{L^2(\O)}^2
		+\|\bFne(t)\|_{L^2(\O)}^2,
		\]
		then, since \(\|\bFne(t)\|_{L^2(\O)}^2\le \mathscr E_n^\e(t)\), \eqref{En_total_3} yields
		\[
		\mathscr E_n^\e(t)
		\le
		\mathscr E_n^\e(0)
		+\frac{2C_K^2}{\mu_1}\|\bff\|_{L^2(0,T;H^{-1}(\O))}^2
		+\frac{9\beta^2}{\alpha}|\O|T
		+\frac{2C_{\lambda,q}^2}{\mu_1}
		\int_0^t\mathscr E_n^\e(s)\,ds .
		\]
		By Gronwall's inequality,
		\begin{equation}\label{En_total_4}
			\sup_{0\le t<T_n^\ast}\mathscr E_n^\e(t)
			\le
			\left[
			\mathscr E_n^\e(0)
			+\frac{2C_K^2}{\mu_1}\|\bff\|_{L^2(0,T;H^{-1}(\O))}^2
			+\frac{9\beta^2}{\alpha}|\O|T
			\right]
			\exp\left(\frac{2C_{\lambda,q}^2}{\mu_1}T\right).
		\end{equation}
		Since \(\bvne(0)\), \(\qne(0)\), and \(\bFne(0)\) are the corresponding \(L^2\)-orthogonal projections of
		\(\bv_0\), \(q_0\), and \(\bF_0\), their \(L^2\)-norms are bounded by the norms of the initial data. Hence
		\begin{equation}\label{E_uniform_bound}
			\sup_{0\le t<T_n^\ast}
			\left[
			\|\bvne(t)\|_{L^2(\O)}^2
			+\|\qne(t)\|_{L^2(\O)}^2
			+\|\bFne(t)\|_{L^2(\O)}^2
			\right]
			\le C,
		\end{equation}
		where \(C\) is independent of \(n\) and \(T_n^\ast\).
		
		Returning to \eqref{En_total_3} and using \eqref{En_total_4}, we also obtain
		\[
		\int_0^{T_n^\ast}\|\bD(\bvne)\|_{L^2(\O)}^2\,ds
		+\e\int_0^{T_n^\ast}\|\nabla\qne\|_{L^2(\O)}^2\,ds
		+\e\int_0^{T_n^\ast}\|\nabla\bFne\|_{L^2(\O)}^2\,ds
		\le C,
		\]
		where \(C\) is independent of \(n\) and \(T_n^\ast\). For fixed \(\e>0\), this gives
		\[\bvne \ \text{bounded in } L^2(0,T;H_{0,\div}^1(\O)^3),
		\quad
		\qne \ \text{bounded in } L^2(0,T;H^1(\O)),
		\quad
		\bFne \ \text{bounded in } L^2(0,T;H^1(\O;\bbS_0)),
		\]
		with constants possibly depending on \(\e\), but independent of \(n\).
		
		The bound \eqref{E_uniform_bound} prevents finite-time blow-up of the coefficient vector \(\bX(t)\), because the chosen bases are \(L^2\)-orthonormal. Hence the blow-up alternative for the finite-dimensional ODE system cannot occur. Therefore the maximal existence time satisfies
		\[
		T_n^\ast=T,
		\]
		and the Galerkin solution exists on the whole interval \(S=(0,T)\) for every \(n\in\mathbb N\).
		
		\textbf{Step 4: Uniform estimates on $S$.}
		Since $T_n^\ast=T$, \eqref{En_total_3} and \eqref{E_uniform_bound} hold on
		$[0,T]$. Using $\int_0^t\|\bFne\|^2_{L^2(\O)}\,ds\le CT$ and the fact that
		$L^2$-orthogonal projections do not increase the $L^2$-norms of the initial
		data, we obtain
		\begin{multline}\label{eue_1}
			\sup_{0\le t\le T}
			\left(
			\|\bvne(t)\|_{L^2(\O)}^2
			+\|\qne(t)\|_{L^2(\O)}^2
			+\|\bFne(t)\|_{L^2(\O)}^2
			\right)
			+\int_S\|\bD(\bvne)\|_{L^2(\O)}^2\,ds
			\\
			+\e\int_S\|\nabla\qne\|_{L^2(\O)}^2\,ds
			+\int_S\|\qne\|_{L^2(\O)}^2\,ds
			+\e\int_S\|\nabla\bFne\|_{L^2(\O)}^2\,ds
			+\int_S\|\bFne\|_{L^2(\O)}^2\,ds
			\le C .
		\end{multline}
		Here $C>0$ is independent of $n$ and of $\e$: the right-hand side of
		\eqref{En_total_3} involves only $T$, $\rho$, $\mu_1$, $\alpha$, $\beta$,
		$\lambda$, $c_3$, $C_K$, $|\O|$, the initial data and
		$\|\bff\|_{L^2(S;H^{-1}(\O))}$, while $\e$ enters only on the left-hand side.
		
		For fixed $\e>0$ this gives
		\begin{align}\label{dUE_1}
			&\|\bFne\|_{L^\infty(0,T;L^2(\om))}^2 +  \|\bFne\|_{L^2(0,T;H^1(\om))}^2 \leq C,\quad \|\qne\|_{L^\infty(0,T;L^2(\om))}^2+\|\qne\|_{L^2(0,T;H^1(\om))}^2\leq C,\\ \label{dUE_2}
			&\|\bvne\|_{L^\infty(0,T;L^2(\om))}^2 + \|\bvne\|_{L^2(0,T;H^1(\om))}^2 \leq C.
		\end{align}
		The constant in \eqref{dUE_2} is independent of $\e$; the constants in
		\eqref{dUE_1} depend on $\e$ only through the gradient terms.
		
		Now we estimate the time derivatives in suitable dual spaces.
		First, we estimate \(\p_t\bvne\). Let \(\vp\in H^1_{0,\div}(\O)^3\). From \eqref{dgweak1}, we have
		\[
		\rho\,\langle\partial_t\bvne,\vp\rangle
		=
		\rho\ino(\bvne \otimes\bvne):\nabla\vp
		-\mu_1\ino\bD(\bvne):\bD(\vp)
		-\lambda\ino\bFne:\bD(\vp)
		+\langle\bff,\vp\rangle .
		\]
		Using the embedding \(H^1(\O)\hookrightarrow L^6(\O)\), interpolation $\|\bvne\|_{L^4(\O)}
		\le C\|\bvne\|_{L^2(\O)}^{1/4}
		\|\bvne\|_{H^1(\O)}^{3/4},$
		and Korn's inequality, we obtain
		\begin{align*}
			\left|\rho\ino(\bvne \otimes\bvne):\nabla\vp\right|
			&\le
			C\|\bvne\|_{L^4(\O)}^2\|\nabla\vp\|_{L^2(\O)}
			\le
			C\|\bvne\|_{L^2(\O)}^{1/2}
			\|\bvne\|_{H^1(\O)}^{3/2}
			\|\vp\|_{H^1(\O)},\\
			\left|\mu_1\ino\bD(\bvne):\bD(\vp)\right|
			&\le
			C\|\nabla\bvne\|_{L^2(\O)}\|\vp\|_{H^1(\O)},\quad
			\left|\lambda\ino\bFne:\bD(\vp)\right|
			\le
			C\|\bFne\|_{L^2(\O)}\|\vp\|_{H^1(\O)}.
		\end{align*}
		Hence
		\[
		\|\partial_t\bvne\|_{(H^1_{0,\div}(\O)^3)^\ast}
		\le
		C\left(
		\|\bvne\|_{L^2(\O)}^{1/2}\|\bvne\|_{H^1(\O)}^{3/2}
		+\|\nabla\bvne\|_{L^2(\O)}
		+\|\bFne\|_{L^2(\O)}
		+\|\bff\|_{H^{-1}(\O)}
		\right).
		\]
		Using \eqref{dUE_1}--\eqref{dUE_2}, together with the embedding $L^2(0,T)\hookrightarrow L^{4/3}(0,T),$
		it follows that each term on the right-hand side belongs to
		$L^{4/3}(0,T)$. Consequently, we conclude
		\begin{equation}\label{dv_dual}
			\|\partial_t\bvne\|_{L^{4/3}(S;(H^1_{0,\div}(\O)^3)^\ast)}
			\le C .
		\end{equation}
		Here \(C\) is independent of \(n\).
		
		Next, we estimate \(\partial_t\qne\). Let \(\phi\in H^1(\O)\). From \eqref{dgweak2},
		\[
		\langle \partial_t\qne,\phi\rangle
		=
		-\e\ino\nabla\qne\cdot\nabla\phi
		+\ino\mathscr T(\qne)\bvne\cdot\nabla\phi
		-\alpha\ino\qne\phi
		+3\beta\ino\phi .
		\]
		The linear terms are estimated as
		\begin{align*}
			\left|\e\ino\nabla\qne\cdot\nabla\phi\right|
			&\le
			\e\|\nabla\qne\|_{L^2(\O)}\|\phi\|_{H^1(\O)},\quad
			\left|\alpha\ino\qne\phi\right|
			\le
			C\|\qne\|_{L^2(\O)}\|\phi\|_{H^1(\O)}.
		\end{align*}
		For the transport term, using boundedness in \eqref{trunc_bdd}, we deduce
		$$\left|\ino \mathscr{T}( \qne)\bvne\cdot\nabla\phi\right|
		\le
		c_3\|\bvne\|_{L^2(\O)}
		\|\nabla\phi\|_{L^2(\O)}
		\leq c_3 \|\bvne\|_{L^2(\O)}\|\phi\|_{H^1(\O)}.$$
		Therefore,
		$$\|\partial_t\qne\|_{(H^1(\O))^\ast}
		\le
		\e\|\nabla\qne\|_{L^2(\O)}
		+ C \|\bvne\|_{L^2(\O)}
		+C\|\qne\|_{L^2(\O)}
		+C.$$
		Arguing as for \eqref{dv_dual}, \eqref{dUE_1}--\eqref{dUE_2} give
		\begin{equation}\label{dq_dual}
			\|\partial_t\qne\|_{ L^{2}(S;(H^1(\O))^\ast)}
			\le C_\e .
		\end{equation}
		
		Finally, we estimate \(\partial_t\bFne\). Let \(\bPS\in H^1(\O;\bbS_0)\). From \eqref{dgweak3},
		\begin{multline*}
			\langle\partial_t\bFne,\bPS\rangle
			=
			-\e\ino\nabla\bFne:\nabla\bPS
			+\ino\bFne:(\bvne\cdot\nabla)\bPS
			-\ino(\mathscr T(\qne)-1)\bD(\bvne):\bPS
			\\
			-\ino\bigl(\bFne\bW(\bvne)-\bW(\bvne)\bFne\bigr):\bPS
			-\alpha\ino\bFne:\bPS .
		\end{multline*}
		The interpolation inequality, the embedding \(H^1(\O)\hookrightarrow L^6(\O)\) and \eqref{trunc_bdd} yield
		\begin{align*}
			&\begin{aligned}
				\left|\e\ino\nabla\bFne:\nabla\bPS\right|
				&\le
				\e\|\nabla\bFne\|_{L^2(\O)}
				\|\bPS\|_{H^1(\O)}.
			\end{aligned}\\
			&\begin{aligned}
				\left|\ino\bFne:(\bvne\cdot\nabla)\bPS\right|
				\le
				\|\bFne\|_{L^3(\O)}
				\|\bvne\|_{L^6(\O)}
				\|\nabla\bPS\|_{L^2(\O)}
				\le
				C\|\bFne\|_{L^2(\Omega)}^{1/2} \|\bFne\|_{H^1(\Omega)}^{1/2}
				\|\bvne\|_{H^1(\O)}
				\|\bPS\|_{H^1(\O)}.
			\end{aligned}\\
			&\begin{aligned}
				\left|
				\ino(\mathscr T(\qne)-1)\bD(\bvne):\bPS
				\right|
				\le
				(c_3+1)\|\bD(\bvne)\|_{L^2(\O)}
				\|\bPS\|_{L^2(\O)}
				\le
				C\|\nabla\bvne\|_{L^2(\O)}
				\|\bPS\|_{H^1(\O)}.
			\end{aligned}\\
			&\begin{aligned}
				\left|
				\ino\bigl(\bFne\bW(\bvne)-\bW(\bvne)\bFne\bigr):\bPS
				\right|
				\le
				C\|\bFne\|_{L^3(\O)} &\|\nabla\bvne\|_{L^2(\O)}
				\|\bPS\|_{L^6(\O)}\\
				&\le C\|\bFne\|_{L^2(\Omega)}^{1/2} \|\bFne\|_{H^1(\Omega)}^{1/2}
				\|\nabla\bvne\|_{L^2(\O)}
				\|\bPS\|_{H^1(\O)}.
			\end{aligned}
		\end{align*}
		Therefore,
		\begin{multline*}
			\|\partial_t\bFne\|_{(H^1(\O;\bbS_0))^\ast}
			\le
			\e\|\nabla\bFne\|_{L^2(\O)}
			+C\|\bFne\|_{L^2(\Omega)}^{1/2} \|\bFne\|_{H^1(\Omega)}^{1/2}\|\bvne\|_{H^1(\O)}
			+C\|\nabla\bvne\|_{L^2(\O)}
			\\
			+C\|\bFne\|_{L^2(\Omega)}^{1/2} \|\bFne\|_{H^1(\Omega)}^{1/2}\|\nabla\bvne\|_{L^2(\O)}
			+C\|\bFne\|_{L^2(\O)}.
		\end{multline*}
		Consequently, arguing as in the derivation of \eqref{dv_dual}--\eqref{dq_dual} and using the estimates \eqref{dUE_1}--\eqref{dUE_2}, we obtain
		\begin{equation}\label{dF_dual}
			\|\partial_t\bFne\|_{L^{4/3}(S;(H^1(\O;\bbS_0))^\ast)}
			\le C_\e .
		\end{equation}

		\textbf{Step 5: Compactness.}
		Combining \eqref{dUE_1}--\eqref{dUE_2} with \eqref{dv_dual}, \eqref{dq_dual}, and \eqref{dF_dual}, we have
		\begin{align}
			\|\bvne\|_{L^\infty(S;L^2(\O)^3)}
			+\|\bvne\|_{L^2(S;H^1_{0,\div}(\O)^3)}
			+\|\partial_t\bvne\|_{L^{4/3}(S;(H^1_{0,\div}(\O)^3)^\ast)}
			&\le C, \label{dv_bdd}\\
			\|\qne\|_{L^\infty(S;L^2(\O))}
			+\|\qne\|_{L^2(S;H^1(\O))}
			+\|\partial_t\qne\|_{ L^{2}(S;(H^1(\O))^\ast)}
			&\le C_\e, \label{dq_bdd}\\
			\|\bFne\|_{L^\infty(S;L^2(\O;\bbS_0))}
			+\|\bFne\|_{L^2(S;H^1(\O;\bbS_0))}
			+\|\partial_t\bFne\|_{L^{4/3}(S;(H^1(\O;\bbS_0))^\ast)}
			&\le C_\e. \label{dF_bdd}
		\end{align}
		Here \(C\) is independent of \(n\), and \(C_\e\) is independent of \(n\), but may depend on \(\e\).
		
		Hence, up to a subsequence, there exist $\bv^\e, q^\e, \bF^\e$
		such that
		\begin{align*}
			\bvne &\rightharpoonup \bv^\e \quad
			\text{weakly in }L^2(S;H^1_{0,\div}(\O)^3),\qquad
			&\bvne &\overset{\ast}{\rightharpoonup}\bv^\e
			\quad\text{weakly-$\ast$ in }L^\infty(S;L^2(\O)^3),\\
			\qne &\rightharpoonup q^\e \quad
			\text{weakly in }L^2(S;H^1(\O)),\qquad
			&\qne &\overset{\ast}{\rightharpoonup} q^\e
			\quad\text{weakly-$\ast$ in }L^\infty(S;L^2(\O)),\\
			\bFne &\rightharpoonup \bF^\e \quad
			\text{weakly in }L^2(S;H^1(\O;\bbS_0)),\qquad
			&\bFne &\overset{\ast}{\rightharpoonup}\bF^\e
			\quad\text{weakly-$\ast$ in }L^\infty(S;L^2(\O;\bbS_0)).
		\end{align*}
		Moreover, by the Aubin--Lions lemma,
		\begin{align}
			\bvne &\sc \bv^\e\qquad\text{strongly in }L^2(S;L^2(\O)^3), \label{dvstrong}\\
			\qne &\sc q^\e\qquad\text{strongly in }L^2(S;L^2(\O)), \label{dqstrong}\\
			\bFne &\sc \bF^\e\qquad\text{strongly in }L^2(S;L^2(\O;\bbS_0)). \label{dFstrong}
		\end{align}
		In particular, up to a further subsequence, these convergences also hold almost everywhere in \(S\times\O\).
		
		\textbf{Step 5.1: Nonlinear terms.} The strong convergences \eqref{dvstrong}--\eqref{dFstrong} yield
		\begin{align}
			\bvne\otimes\bvne &\sc \bv^\e\otimes\bv^\e
			&& \text{strongly in } L^1(S;L^1(\O)^{3\X3}), \label{dv_L1_strong}\\
			\bFne\otimes\bvne &\sc \bF^\e\otimes\bv^\e
			&& \text{strongly in } L^1(S;L^1(\O)^{3\X3\X3}). \label{dF_L1_strong}
		\end{align}
		Since $\mathscr T$ is Lipschitz and bounded by \eqref{trunc_bdd}, \eqref{dqstrong} gives
		\begin{equation}\label{tqstrong}
			\mathscr T(\qne)\sc \mathscr T(q^\e)
			\quad\text{strongly in }L^2(S\times\O) \quad\text{and}\quad \mathscr{T}(q_n^\varepsilon) \sc \mathscr{T}(q^\varepsilon) \quad \text{a.e. in } S\times\Omega.
		\end{equation}
		Combined with \eqref{dvstrong} and \eqref{dFstrong}, this gives
		\begin{align}
			\mathscr{T}(q_n^\varepsilon)\,\bm{v}_n^\varepsilon &\longrightarrow\; \mathscr{T}(q^\varepsilon)\,\bm{v}^\varepsilon
			\quad &\text{strongly in } L^2(S;L^2(\Omega)^3), \label{dq_L1_strong}\\
			(\mathscr T(\qne)-1)\bFne &\longrightarrow\; (\mathscr{T}(q^\varepsilon)-1)\,\bm{F}^\varepsilon
			\quad &\text{strongly in } L^2(S;L^2(\Omega;\bbS_0)).\label{dT_L1_strong}
		\end{align}
		
		\textbf{Step 6: Limit in the momentum equation.}
		From \eqref{dv_dual}, we have
		\[
		\partial_t\bvne\rightharpoonup\partial_t\bv^\e
		\quad\text{weakly in }L^{4/3}(S;(H^1_{0,\div}(\O)^3)^\ast).
		\]
		Therefore, for every \(\vp\in L^4(S;H^1_{0,\div}(\O)^3)\),
		\[
		\int_S\langle\partial_t\bvne,\vp\rangle\,dt
		\to
		\int_S\langle\partial_t\bv^\e,\vp\rangle\,dt.
		\]
		Since $S=(0,T)$ has finite measure, the linear terms pass to the limit by the weak convergences above:
		\[
		\inso \bD(\bvne):\bD(\vp)
		\to
		\inso \bD(\bv^\e):\bD(\vp),
		\quad\text{and}\quad
		\inso \bFne:\bD(\vp)
		\to
		\inso \bF^\e:\bD(\vp).
		\]
		Since $\bvne\otimes\bvne$ is bounded in the reflexive space $L^{4/3}(S;L^2(\O)^{3\X3})$, \eqref{dv_L1_strong} implies
		$$(\bvne \otimes\bvne) \wc (\bv^\e \otimes\bv^\e)\qquad \text{weakly in } L^{4/3}(S;L^2(\O)^{3\X3}).$$
		Consequently,
		\[
		\inso
		(\bvne \otimes\bvne):\nabla\vp
		\sc
		\inso
		(\bv^\e \otimes\bv^\e):\nabla\vp,
		\]
		for all $\vp\in L^4(S;H^1_{0,\div}(\Omega)^3)$. Hence \(\bv^\e\) satisfies the limiting momentum equation.
		
		\textbf{Step 7: Limit in the scalar equation.} From \eqref{dq_dual}, we have
		$$\partial_t\qne\rightharpoonup\partial_t q^\e
		\quad\text{weakly in }L^{2}(S;(H^1(\O))^\ast).$$
		Thus, for every \(\phi\in L^2(S;H^1(\O))\),
		\[
		\int_S\langle\partial_t\qne,\phi\rangle\,dt
		\sc
		\int_S\langle\partial_t q^\e,\phi\rangle\,dt.
		\]
		The diffusion and damping terms pass to the limit by weak convergence:
		\[
		\e\inso\nabla\qne\cdot\nabla\phi
		\to
		\e\inso\nabla q^\e\cdot\nabla\phi,\quad
		\text{and}\quad
		\alpha\inso\qne\phi
		\to
		\alpha\inso q^\e\phi .
		\]
		Moreover, for all $\phi\in L^2(S;H^1(\O))$, the strong convergence \eqref{dq_L1_strong} yields
		$$\inso \mathscr T(\qne)\bvne\cdot\nabla\phi \sc \inso \mathscr T(q^\e) \bv^\e\cdot\nabla\phi .$$
		Therefore, \(q^\e\) satisfies the limiting scalar equation.
		
		\textbf{Step 8: Limit in the tensor equation.}
		From \eqref{dF_dual}, we have
		\[
		\partial_t\bFne\rightharpoonup\partial_t\bF^\e
		\quad\text{weakly in }L^{4/3}(S;(H^1(\O;\bbS_0))^\ast).
		\]
		Thus, for every \(\bPS\in L^4(S;H^1(\O;\bbS_0))\),
		\[
		\int_S\langle\partial_t\bFne,\bPS\rangle\,dt
		\sc
		\int_S\langle\partial_t\bF^\e,\bPS\rangle\,dt.
		\]
		The diffusion and damping terms pass to the limit by weak convergence:
		\[
		\e\inso\nabla\bFne:\nabla\bPS
		\sc
		\e\inso\nabla\bF^\e:\nabla\bPS,\quad
		\text{and}\quad
		\alpha\inso\bFne:\bPS
		\sc
		\alpha\inso\bF^\e:\bPS .
		\]
		As in Step~6, since $\bFne\otimes\bvne$ is bounded in $L^{4/3}(S;L^2(\O)^{3\X3\X3})$ by
		$\|\bFne\otimes\bvne\|_{L^2(\O)}\le\|\bFne\|_{L^3(\O)}\|\bvne\|_{L^6(\O)}$
		and \eqref{dF_bdd}, \eqref{dF_L1_strong} gives 
		\[
		\inso
		\bFne:(\bvne\cdot\nabla)\bPS
		\sc
		\inso
		\bF^\e:(\bv^\e\cdot\nabla)\bPS ,
		\]
		for all $\bPS\in L^4(S;H^1(\O;\bbS_0))$. Moreover, since $L^4(S;H^1(\Omega;\mathbb{S}_0))$ embeds into $L^2(S;L^2(\Omega;\mathbb{S}_0))$, the convergence~\eqref{tqstrong} together with 
		the uniform bound~\eqref{trunc_bdd} and the dominated convergence theorem, implies that
		\[
		(\mathscr{T}(\qne)-1)\bPS
		\longrightarrow
		(\mathscr{T}(q^\varepsilon)-1)\bPS
		\quad\text{strongly in }L^2(S;L^2(\Omega;\mathbb{S}_0)).
		\]
		Therefore, using the weak convergence of $\bm{D}(\bvne)$, we conclude that for every \(\bPS\in L^4(S;H^1(\O;\bbS_0))\),
		\[
		\int_{S\times\Omega}
		(\mathscr{T}(\qne)-1)\bm{D}(\bvne):\bm{\Psi}\,dx\,dt
		\longrightarrow
		\int_{S\times\Omega}
		(\mathscr{T}(q^\varepsilon)-1)\bm{D}(\bv^\e):\bm{\Psi}\,dx\,dt.
		\]
		Finally, the estimate of the Jaumann term in Step~4 gives
		$$\|\bFne\bW(\bvne)\|_{L^{6/5}(\O)}\le\|\bFne\|_{L^3(\O)}\|\nabla\bvne\|_{L^2(\O)}
		\le C\|\bFne\|^{1/2}_{L^2(\O)}\|\bFne\|^{1/2}_{H^1(\O)}\|\nabla\bvne\|_{L^2(\O)}.$$
		By \eqref{dF_bdd}, the commutator
		$\bJc^\e_n:=\bFne\bW(\bvne)-\bW(\bvne)\bFne$ is therefore bounded in the
		reflexive space $L^{4/3}(S;L^{6/5}(\O)^{3\times3})$, whose dual
		$L^4(S;L^6(\O)^{3\times3})$ contains $L^4(S;H^1(\O;\bbS_0))$ by
		\eqref{embed_1}. By \eqref{dFstrong} and $\bW(\bvne)\wc\bW(\bv^\e)$ weakly in
		$L^2(\som)$, we have
		$\bJc^\e_n\to\bF^\e\bW(\bv^\e)-\bW(\bv^\e)\bF^\e$ in $\mathcal D'(\som)$.
		Hence the weak limit in $L^{4/3}(S;L^{6/5}(\O)^{3\times3})$ is identified, and
		\[
		\inso
		\bigl(\bFne\bW(\bvne)-\bW(\bvne)\bFne\bigr):\bPS
		\sc
		\inso
		\bigl(\bF^\e\bW(\bv^\e)-\bW(\bv^\e)\bF^\e\bigr):\bPS,
		\]
		for every $\bPS\in L^4(S;H^1(\O;\bbS_0)).$
		
		Collecting the above convergences, the limit triple \((\bv^\e,q^\e,\bF^\e)\) satisfies the weak formulation of the truncated regularized system.
		
		\textbf{Step 9: Removal of the truncation.}
		Since $(\bv^\e,q^\e)$ is a weak solution of the truncated system obtained in Step~8, Lemma~\ref{lem:qbdd} yields 
		$$c_2\le q^\e(x,t)\le c_3
		\qquad\text{a.e. } (x,t)\in S\times\O .$$
		Therefore, by \eqref{trunc_id},
		$$\mathscr T(q^\e)=q^\e
		\qquad\text{a.e. in }S\times\O .$$
		Consequently,
		$$\mathscr T(q^\e)\bv^\e=q^\e\bv^\e \quad\text{and}\quad
		(\mathscr T(q^\e)-1)\bD(\bv^\e)=(q^\e-1)\bD(\bv^\e)
		\qquad\text{a.e. in }S\times\O .$$
		Hence, every truncated term in the weak formulation coincides with its corresponding original term. It follows that the weak formulation of the truncated regularized system reduces to the weak formulation of the original regularized system. Therefore, the limit triple \((\bv^\e,q^\e,\bF^\e)\) is a weak solution of the original regularized system \eqref{Rmodel+} in the sense of Definition~\ref{Def_Weak}.
		
		\textbf{Step 10: Energy relations at the \(\varepsilon\)-level.}
		We now derive the energy relations for the weak solution
		\((\bv^\e,q^\e,\bF^\e)\).
		
		Using the bound \eqref{dq_bdd} we have $q^\e\in L^2(S;H^1(\O))$ and $
		\partial_tq^\e\in L^2(S;(H^1(\O))^\ast)$.
		Thus, the Lions--Magenes lemma (see \cite{lionsMagenes1972}) gives
		\[
		q^\e\in C([0,T];L^2(\O)),\quad\text{and}\qquad \frac{1}{2}\frac{d}{dt}\|q^\e(t)\|_{L^2(\O)}^2
		=
		\langle \partial_tq^\e(t),q^\e(t)\rangle
		\quad\text{for a.e. }t\in S.
		\]
		Therefore, taking \(\phi=q^\e\) in the scalar equation \eqref{dweak2}, and arguing as in the derivation of \eqref{En_q} and integrating over \((0,t)\), we obtain the energy identity \eqref{eq:EEQ}, for every \(t\in[0,T]\). Next, we have
		$$\frac{\rho}{2}\|\bm{v}_n^\e(t)\|_{L^2(\Omega)}^2
		+\mu_1\int_0^t\|\bm{D}(\bm{v}_n^\e)\|_{L^2(\Omega)}^2\,ds
		+\lambda\int_{Q_t}\bm{F}_n^\e:\bm{D}(\bm{v}_n^\e)\,dx\,ds =\frac{\rho}{2}\|\bm{v}_{n}^\e (0)\|_{L^2(\Omega)}^2
		+\int_0^t\langle\bm{f},\bm{v}_n^\e\rangle\,ds.$$
		
		Passing to the limit as $n\to\infty$, the weak lower semicontinuity of the $L^2$-norm yields
		$$\|\bm{v}^\e(t)\|_{L^2(\Omega)}^2
		\le
		\liminf_{n\to\infty}
		\|\bm{v}_n^\e(t)\|_{L^2(\Omega)}^2 \quad\text{and}\quad \int_0^t\|\bm{D}(\bm{v}^\e)\|_{L^2(\Omega)}^2\,ds
		\le
		\liminf_{n\to\infty}
		\int_0^t\|\bm{D}(\bm{v}_n^\e)\|_{L^2(\Omega)}^2\,ds.$$
		By \eqref{dvstrong}, for a.e.\ $t$ and along a further subsequence,
		$$\bvne(t)\wc \bv^\e(t)
		\quad\text{weakly in }L^2(\O)^3 .$$
		
		The strong convergence \eqref{dFstrong}, the weak convergence of $\bD(\bvne)$ and the convergence of the initial data and forcing term allow us to pass to the limit in the remaining terms, which gives \eqref{eq:EEV}.
		
		Similarly, for the tensor equation, the Galerkin energy identity is
		\begin{multline*}
			\frac12\|\bFne(t)\|^2_{L^2(\O)}
			+\e\int_0^t\|\nabla\bFne\|^2_{L^2(\O)}\,ds
			+\alpha\int_0^t\|\bFne\|^2_{L^2(\O)}\,ds
			\\
			+\int_{Q_t}(\mathscr T(\qne)-1)\bD(\bvne):\bFne\,dx\,ds
			=
			\frac12\|\bFne(0)\|^2_{L^2(\O)}.
		\end{multline*}
		The transport and Jaumann terms vanish by the same argument as in \eqref{En_F}.
		By \eqref{dFstrong}, for a.e.\ $t$ and along a further subsequence,
		$$\bFne(t)\wc \bF^\e(t)
		\quad\text{weakly in }L^2(\O;\bbS_0).$$
		Passing to the limit as $n\to\infty$, the weak lower semicontinuity of the $L^2$-norm and the gradient norm, together with the strong convergence \eqref{dFstrong}, \eqref{dT_L1_strong} 
		and the weak convergence of $\bm{D}(\bm{v}_n^\e)$, allow us to pass to the limit in the remaining terms. Consequently, removing the truncation yields \eqref{eq:EEF} for a.e.\ $t\in(0,T)$. 
		
		This completes the proof of Theorem~\ref{Thm_existence}.
	\end{proof}
	
	\section{Asymptotic analysis of the \texorpdfstring{$\varepsilon$}{epsilon}-dependent regularized model \texorpdfstring{\eqref{Rmodel+}}{(RM)}}
	\label{sec:sec04}
	Throughout this section we assume $0<\e\le1$.

	\begin{lemma}\label{e_Ues}
		Let $(\bv^\e,q^\e,\bF^\e)$ be the weak solution of \eqref{Rmodel+} constructed in
		Theorem~\ref{Thm_existence}. There is $C>0$, independent of $\e$, such that
		\begin{align}
			\label{U_v_bdd}
			\|\bv^\e\|_{L^\infty(0,T;L^2(\om))}^2 + \|\bv^\e\|_{L^2(0,T;H^1(\om))}^2
			+ \|\partial_t \bv^\e\|_{L^{4/3}(S;\,(H^1_{0,\div}(\om)^3)^*)}
			&\leq C,\\
			\label{U_q_bdd}
			\|q^\e\|_{L^\infty(0,T;L^2(\om))}^2 + \|q^\e\|_{L^2(0,T;L^2(\om))}^2
			+ \e\|\nabla q^\e\|_{L^2(0,T;L^2(\om))}^2
			+ \|\partial_t q^\e\|_{L^2(S;\,(H^1(\O))^\ast)}
			&\leq C,\\
			\label{U_F_bdd}
			\|\bF^\e\|_{L^\infty(0,T;L^2(\om))}^2 + \|\bF^\e\|_{L^2(0,T;L^2(\om))}^2
			+ \e\|\nabla \bF^\e\|_{L^2(0,T;L^2(\om))}^2
			+ \|\partial_t \bF^\e\|_{L^2(S;\,(H^2(\O;\bbS_0))^\ast)}
			&\leq C.
		\end{align}
	\end{lemma}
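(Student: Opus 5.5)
The energy part of the lemma comes from passing the Galerkin estimate \eqref{eue_1} to the limit $n\to\infty$. The constant there is independent of $n$ and $\e$, because $\e$ enters only on the left-hand side. By weak and weak-$\ast$ lower semicontinuity of the norms under the convergences of Step~5 in the proof of Theorem~\ref{Thm_existence}, we get
\[
\|\bv^\e\|^2_{L^\infty(L^2)}+\|\bD(\bv^\e)\|^2_{L^2(L^2)}+\|q^\e\|^2_{L^\infty(L^2)}+\e\|\nabla q^\e\|^2_{L^2(L^2)}+\|\bF^\e\|^2_{L^\infty(L^2)}+\e\|\nabla\bF^\e\|^2_{L^2(L^2)}\le C .
\]
Korn's inequality then controls $\|\bv^\e\|_{L^2(H^1)}$. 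The $L^2(L^2)$ norms of $q^\e$ and $\bF^\e$ are bounded by $T$ times the $L^\infty(L^2)$ norms. We also record the $\e$-independent bound $c_2\le q^\e\le c_3$ from Lemma~\ref{lem:qbdd}, used in Step~9.

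For the time derivatives we test the weak formulations with time-independent functions and estimate each term without using $\nabla q^\e$ or $\nabla\bF^\e$ except multiplied by $\e$. The estimate for $\p_t\bv^\e$ in Step~4 used only $\|\bv^\e\|_{L^\infty(L^2)}$, $\|\bv^\e\|_{L^2(H^1)}$, $\|\bF^\e\|_{L^2}$ and $\bff$, so it already gives an $\e$-independent bound in $L^{4/3}(S;(H^1_{0,\div})^\ast)$.

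For $q^\e$, take $\phi\in H^1(\O)$. Then:
\begin{itemize}
\item the diffusion term satisfies $|\e(\nabla q^\e,\nabla\phi)|\le \sqrt{\e}\,(\sqrt{\e}\|\nabla q^\e\|)\|\phi\|_{H^1}$, and the factor $\sqrt{\e}\|\nabla q^\e\|$ is bounded in $L^2(S)$ because $\e\le1$;
\item the transport term satisfies $|(q^\e\bv^\e,\nabla\phi)|\le c_3\|\bv^\e\|_{L^2}\|\phi\|_{H^1}$, using $|q^\e|\le c_3$, and this is bounded in $L^\infty(S)$;
\item the remaining terms are trivially bounded.
\end{itemize}
This gives $\p_t q^\e$ bounded in $L^2(S;(H^1)^\ast)$ uniformly in $\e$.

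The $\bF^\e$ equation is the main obstacle. In Step~4 the transport and Jaumann terms were controlled through $\|\bF^\e\|_{L^3}$, which needs $\|\bF^\e\|_{H^1}$ and so degenerates as $\e\to0$. I will instead test with $\bPS\in H^2(\O;\bbS_0)$ and use \eqref{embed_2}, $H^2\hookrightarrow L^\infty$, together with $H^2\hookrightarrow W^{1,6}$. The terms are estimated as follows:
\begin{itemize}
\item the transport term, $|(\bF^\e,(\bv^\e\cdot\nabla)\bPS)|\le\|\bF^\e\|_{L^2}\|\bv^\e\|_{L^3}\|\nabla\bPS\|_{L^6}\le C\|\bF^\e\|_{L^2}\|\bv^\e\|_{H^1}\|\bPS\|_{H^2}$;
\item the Jaumann term, $|(\bF^\e\bW(\bv^\e)-\bW(\bv^\e)\bF^\e,\bPS)|\le 2\|\bF^\e\|_{L^2}\|\nabla\bv^\e\|_{L^2}\|\bPS\|_{L^\infty}\le C\|\bF^\e\|_{L^2}\|\nabla\bv^\e\|_{L^2}\|\bPS\|_{H^2}$;
\item the stretching term, $|((q^\e-1)\bD(\bv^\e),\bPS)|\le(c_3+1)\|\nabla\bv^\e\|_{L^2}\|\bPS\|_{L^2}$;
\item the diffusion term, bounded by $\sqrt{\e}\,(\sqrt{\e}\|\nabla\bF^\e\|)\|\bPS\|_{H^1}$, handled as for $q^\e$;
\item the damping term $\alpha(\bF^\e,\bPS)$, which is trivial.
\end{itemize}
In each of the first three terms the $\bF^\e$ and $q^\e$ factors lie in $L^\infty(S)$ and the $\bv^\e$ factor lies in $L^2(S)$, so the dual norm lies in $L^2(S)$. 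Hence $\p_t\bF^\e$ is bounded in $L^2(S;(H^2(\O;\bbS_0))^\ast)$ independently of $\e$.

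One technical point remains. $\p_t\bF^\e$ was defined as an element of $L^{4/3}(S;(H^1)^\ast)$, and it must be identified with a bounded functional on $L^2(S;H^2)$. This follows by density of $L^4(S;H^2)$ in $L^2(S;H^2)$ and the above estimate, which holds for all $\bPS\in L^4(S;H^2(\O;\bbS_0))\subset L^4(S;H^1(\O;\bbS_0))$ via \eqref{dweak3}.
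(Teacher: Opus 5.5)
Your proposal is correct. The dual-space estimates are the same as the paper's: you test with $\bPS\in H^2(\Omega;\bbS_0)$, put $\bPS$ in $L^\infty$ for the Jaumann term, use $|q^\e|\le c_3$ for the transport and stretching terms, and keep $\nabla q^\e$ and $\nabla\bF^\e$ only together with a factor $\sqrt{\e}$. Pairing $L^3$ with $W^{1,6}$ in the transport term, where the paper pairs $L^6$ with $W^{1,3}$, makes no difference.

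The routes differ only in the energy part. You inherit the Galerkin bound \eqref{eue_1} by weak and weak-$\ast$ lower semicontinuity. That bound is already uniform in $\e$ because the truncation constant $c_3$ controls the coupling, so you need no second Gronwall argument and no maximum principle at this stage. The paper instead adds the $\e$-level energy relations \eqref{eq:EEV}--\eqref{eq:EEF}. It bounds the coupling terms with $|q^\e-1|\le c_3+1$ from \eqref{q_bdd} in place of \eqref{trunc_bdd}, and reruns Gronwall in integral form on $\mathscr E^\e$.

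Your argument is shorter, but it relies on the solution being the Galerkin limit. The paper's applies to any weak solution of \eqref{Rmodel+} satisfying those energy inequalities and \eqref{q_bdd}. Your closing remark, identifying $\partial_t\bF^\e$ with an element of $L^2(S;(H^2(\Omega;\bbS_0))^\ast)$, supplies a detail the paper leaves implicit.
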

	
	\begin{proof}
		
		\textbf{Step 1: Uniform estimates.}
		Let $\mathscr E^\e:=\rho\|\bv^\e\|^2_{L^2(\O)}+\|q^\e\|^2_{L^2(\O)}+\|\bF^\e\|^2_{L^2(\O)}$.
		We add \eqref{eq:EEV}--\eqref{eq:EEF}, move the two coupling terms to the
		right-hand side and estimate as in \eqref{force_est}--\eqref{coupling_est},
		using $|q^\e-1|\le c_3+1$ from \eqref{q_bdd} in place of \eqref{trunc_bdd}.
		For a.e.\ $t\in S$ this gives
		\[
		\tfrac12\mathscr E^\e(t)+\tfrac{\mu_1}{2}\|\bD(\bv^\e)\|^2_{L^2(Q_t)}
		+\e\|\nabla q^\e\|^2_{L^2(Q_t)}+\e\|\nabla\bF^\e\|^2_{L^2(Q_t)}
		\le C_0+\frac{C_{\lambda,q}^2}{\mu_1}\int_0^t\mathscr E^\e\,ds,
		\]
		where $C_0:=\tfrac12\mathscr E^\e(0)+\tfrac{C_K^2}{\mu_1}\|\bff\|^2_{L^2(S;H^{-1}(\O))}
		+\tfrac{9\beta^2}{2\alpha}|\O|T$. Neither $C_0$ nor $C_{\lambda,q}=\lambda+c_3+1$
		depends on $\e$. Gronwall's inequality in integral form, applicable since
		$\mathscr E^\e\in L^\infty(S)$, and Korn's inequality then give
		\begin{equation}\label{eUE}
			\begin{aligned}
				\|\bv^\e\|_{L^\infty(S;L^2(\om))}^2 + \|\bv^\e\|_{L^2(S;H^1(\om))}^2 &\leq C,\\
				\|q^\e\|_{L^\infty(S;L^2(\om))}^2 + \|q^\e\|_{L^2(S;L^2(\om))}^2
				+ \e\|\nabla q^\e\|_{L^2(S;L^2(\om))}^2 &\leq C,\\
				\|\bF^\e\|_{L^\infty(S;L^2(\om))}^2 + \|\bF^\e\|_{L^2(S;L^2(\om))}^2
				+ \e\|\nabla \bF^\e\|_{L^2(S;L^2(\om))}^2 &\leq C,
			\end{aligned}
		\end{equation}
		with $C>0$ independent of $\e$.

		\textbf{Step 2: Dual-space estimates.}
		For $\vp\in H^1_{0,\div}(\om)^3$, \eqref{dweak1} gives
		\[
		\rho\,\langle\partial_t\bv^\e,\vp\rangle
		= \rho\int_\om(\bv^\e \otimes\bv^\e):\nabla\vp
		- \mu_1\ino \bD(\bv^\e):\bD(\vp)
		- \lambda\ino\bF^\e:\bD(\vp)
		+ \langle\bff,\vp\rangle.
		\]
		Proceeding exactly as in the derivation of the dual estimate for $\bvne$ in Step~4 of Theorem~\ref{Thm_existence}, we obtain
		\begin{equation}\label{e_v_dual}
			\|\partial_t\bv^\e\|_{L^{4/3}(S;\,(H^1_{0,\div}(\om)^3)^*)} \leq C,
		\end{equation}
		with $C$ independent of $\e$.   
		
		For $\phi\in H^1(\om)$, \eqref{dweak2} gives
		\[
		\langle\partial_t q^\e,\phi\rangle
		= -\e\ino\nabla q^\e\cdot\nabla\phi
		+ \ino q^\e\bv^\e\cdot\nabla\phi
		- \alpha\ino q^\e\phi
		+ 3\beta\ino\phi.
		\]
		Using the $L^\infty$-bound \eqref{q_bdd} on $q^\e$ and the estimates \eqref{eUE}, the Cauchy--Schwarz inequality yields
		\begin{align*}
			&\left|\e\ino\nabla q^\e\cdot \nabla\phi\right|\leq \e\|\nabla q^\e\|_{L^2(\om)} \|\phi\|_{H^1(\om)}=\sqrt{\e}\,\|\sqrt{\e}\,\nabla q^\e\|_{L^2(\om)}\|\phi\|_{H^1(\om)}, \quad
			\left|\alpha \ino q^\e \phi\right|\leq  C\,\|q^\e\|_{L^2(\om)} \|\phi\|_{H^1(\om)}\\
			&\left|\ino q^\e\bv^\e\cdot\nabla\phi\right|\leq  \|q^\e\|_{L^\infty(\Omega)}\|\bv^\e\|_{L^2(\Omega)} \|\nabla \phi\|_{L^2(\Omega)}\leq c_3\|\bv^\e\|_{L^2(\Omega)}\|\phi\|_{H^1(\Omega)}, \quad
			\left|3\beta \ino\phi\right|\leq C\,\|\phi\|_{H^1(\om)}.
		\end{align*}
		Combining the above, we obtain
		$$\sup_{\|\phi\|_{H^1(\om)}\,\leq 1}\,\left|\langle\partial_t q^\e,\phi\rangle\right|\leq C\big(\|\sqrt{\e}\,\nabla q^\e\|_{L^2(\om)}+\|\bv^\e\|_{L^2(\Omega)}+\|q^\e\|_{L^2(\om)}+1\big).$$
		Squaring, integrating over $S$ and using \eqref{eUE}, in particular
		$\|\sqrt{\e}\,\nabla q^\e\|_{L^2(\som)}\le C$, gives
		\begin{equation}\label{e_q_dual}
			\|\partial_t q^\e\|_{L^2(S;(H^1(\O))^\ast)}\leq C.
		\end{equation}
		
		For $\bPS\in H^2(\om;\bbS_0)$,
		\eqref{dweak3} gives
		\begin{multline*}
			\langle\partial_t\bF^\e,\bPS\rangle
			= -\e\ino\nabla\bF^\e:\nabla\bPS
			+ \ino\bF^\e:(\bv^\e\cdot\nabla)\bPS
			- \ino(q^\e-1)\bD(\bv^\e):\bPS\\
			- \ino(\bF^\e\bW(\bv^\e)-\bW(\bv^\e)\bF^\e):\bPS
			- \alpha\ino\bF^\e:\bPS.
		\end{multline*}
		In three dimensions, the embeddings $H^2(\om)\hookrightarrow L^\infty(\om)$ and $H^1(\om)\hookrightarrow L^6(\om)\hookrightarrow L^3(\om)$ together with \eqref{eUE} yield
		\begin{align*}
			\left|\e\ino\nabla\bF^\e:\nabla\bPS\right|
			&\leq \e\|\nabla\bF^\e\|_{L^2(\om)}\|\nabla\bPS\|_{L^2(\om)}
			\leq \sqrt{\e}\|\sqrt{\e}\nabla\bF^\e\|_{L^2(\om)}\|\bPS\|_{H^2(\om)},\\
			\left|\ino\bF^\e:(\bv^\e\cdot\nabla)\bPS\right|
			&\leq \|\bF^\e\|_{L^2(\om)}\|\bv^\e\|_{L^6(\om)}\|\nabla\bPS\|_{L^3(\om)}
			\leq C\|\bF^\e\|_{L^2(\om)}\|\bv^\e\|_{H^1(\om)}\|\bPS\|_{H^2(\om)},\\
			\left|\ino(q^\e-1)\bD(\bv^\e):\bPS\right|
			&\leq \|q^\e-1\|_{L^\infty(\om)}\|\bD(\bv^\e)\|_{L^2(\om)}\|\bPS\|_{L^2(\om)}
			\leq (c_3+1)\|\nabla\bv^\e\|_{L^2(\om)}\|\bPS\|_{H^2(\om)},\\
			\left|\ino(\bF^\e\bW(\bv^\e)-\bW(\bv^\e)\bF^\e):\bPS\right|
			&\leq 2\|\bF^\e\|_{L^2(\om)}\|\nabla\bv^\e\|_{L^2(\om)}\|\bPS\|_{L^\infty(\om)}
			\leq C\|\bF^\e\|_{L^2(\om)}\|\nabla\bv^\e\|_{L^2(\om)}\|\bPS\|_{H^2(\om)},\\
			\left|\alpha\ino\bF^\e:\bPS\right|
			&\leq \alpha\|\bF^\e\|_{L^2(\om)}\|\bPS\|_{L^2(\om)}
			\leq C\|\bF^\e\|_{L^2(\om)}\|\bPS\|_{H^2(\om)}.
		\end{align*}
		Therefore, we conclude
		\begin{multline*}
			\sup_{\|\bPS\|_{H^2(\om)}\leq 1}|\langle\partial_t\bF^\e,\bPS\rangle|
			\leq C\Bigl(\|\sqrt{\e}\nabla\bF^\e\|_{L^2(\om)}
			+ \|\bF^\e\|_{L^2(\om)}\|\bv^\e\|_{H^1(\om)}
			+ (c_3+1)\|\nabla\bv^\e\|_{L^2(\om)}\\
			+ \|\bF^\e\|_{L^2(\om)}\|\nabla\bv^\e\|_{L^2(\om)}
			+ \|\bF^\e\|_{L^2(\om)}\Bigr).
		\end{multline*}
		By \eqref{eUE} the right-hand side is bounded in $L^2(S)$; squaring and integrating over $S$ gives
		\begin{equation}\label{e_F_dual}
			\|\partial_t\bF^\e\|_{L^2(S;\,(H^2(\O;\bbS_0))^\ast)} \leq C,
		\end{equation}
		with $C$ independent of $\e$. Combining \eqref{eUE}--\eqref{e_F_dual} completes the proof.
	\end{proof}
	

	\begin{theorem}[Compactness and passage to the limit]\label{Compactness}
		Let $\{(\bv^\varepsilon, q^\varepsilon, \bF^\varepsilon)\}$ be a family of weak solutions of the 
		regularized problem \eqref{Rmodel+} satisfying the uniform estimates of Lemma~\ref{e_Ues}. 
		Then there exist functions
		\begin{equation}\label{lim_space}
			\bv \in L^\infty(0,T;L^2(\Omega)^3) \cap L^2(0,T;H_{0,\div}^1(\Omega)^3), \quad
			q \in L^\infty(\som), \quad
			\bF \in L^\infty(0,T;L^2(\Omega;\bbS_0))
		\end{equation}
		together with a subsequence, not relabeled, such that
		\begin{align}
			\label{evweak}
			&\bv^\e \wc \bv \quad \text{weakly in } L^2(0,T;H^1_0(\Omega)^3), 
			&\bv^\e \overset{*}{\wc} \bv \quad \text{weakly-$\ast$ in } L^\infty(0,T;L^2(\Omega)^3),\\
			\label{eFweak}
			&\bF^\e \wc \bF \quad \text{weakly in } L^2(0,T;L^2(\Omega;\bbS_0)), 
			&\bF^\e \overset{*}{\wc} \bF \quad \text{weakly-$\ast$ in } L^\infty(0,T;L^2(\Omega;\bbS_0)),\\
			\label{eqweak}
			&q^\e \wc q \quad \text{weakly in } L^2(0,T;L^2(\Omega)), 
			&q^\e \overset{*}{\wc} q \quad \text{weakly-$\ast$ in } L^\infty(0,T;L^2(\Omega)),\\
			\label{evstrong}
			&\bv^\varepsilon \sc \bv \quad \text{strongly in } L^2(0,T;L^p(\Omega)^3),\ 1\le p<6,
			&\bv^\e \wc \bv \quad \text{weakly in } L^2(0,T;L^6(\Omega)^3),\\
			\label{eFstrong-Hneg}
			&\bF^\e \sc \bF \quad \text{strongly in } L^2(0,T;(H^{1}(\Omega;\bbS_0))^\ast).
		\end{align}
		Moreover,
		\begin{equation}\label{eq:sqrt-eps-weak}
			\sqrt{\e}\,\nabla q^\e \rightharpoonup 0 \quad \text{weakly in } L^2(\som)^3, \qquad
			\sqrt{\e}\,\nabla \bF^\e \rightharpoonup 0 \quad \text{weakly in } L^2(\som;\bbS_0)^3 .
		\end{equation}
		In the tensor equation all terms admit a passage to the limit except the nonlinear terms
		$(q^\varepsilon - 1)\bD(\bv^\varepsilon)$ and $\bF^\varepsilon\bW(\bv^\varepsilon) - \bW(\bv^\varepsilon)\bF^\varepsilon$.
	\end{theorem}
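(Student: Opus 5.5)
The plan is to extract all limits from the $\e$-uniform bounds of Lemma~\ref{e_Ues} using Banach--Alaoglu and Aubin--Lions, and then to check the limit passage term by term in \eqref{dweak3}. First, by \eqref{U_v_bdd}, \eqref{U_q_bdd} and \eqref{U_F_bdd}, the families $\bv^\e$, $q^\e$ and $\bF^\e$ are bounded in the reflexive space $L^2(S;L^2)$ and in $L^\infty(S;L^2)$, the dual of the separable space $L^1(S;L^2)$. A diagonal subsequence therefore gives \eqref{evweak}--\eqref{eqweak}. The two topologies produce the same limit because both imply distributional convergence.

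For $q$ we use the pointwise bound $c_2\le q^\e\le c_3$ from Lemma~\ref{lem:qbdd}. The set $\{c_2\le g\le c_3\}$ is convex and closed in $L^2(\som)$, hence weakly closed, so $q\in L^\infty(\som)$. The limit $\bv$ lies in $L^2(S;H^1_{0,\div})$ because this is a weakly closed subspace.

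For \eqref{evstrong} we apply Aubin--Lions to the triple $H^1_{0,\div}\hookrightarrow\hookrightarrow L^p\hookrightarrow (H^1_{0,\div})^\ast$ with $p<6$, using the $\e$-independent time-derivative bound in $L^{4/3}$. This gives strong convergence in $L^2(S;L^p)$. The $L^6$ weak convergence follows from the Sobolev embedding and the boundedness of $\bv^\e$ in $L^2(S;H^1)$.

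For \eqref{eFstrong-Hneg} we no longer have an $\e$-uniform $H^1$ bound on $\bF^\e$, so we use the Aubin--Lions triple $L^2(\O;\bbS_0)\hookrightarrow\hookrightarrow (H^1(\O;\bbS_0))^\ast\hookrightarrow (H^2(\O;\bbS_0))^\ast$. The first embedding is compact as the adjoint of the compact embedding $H^1\hookrightarrow L^2$; since $H^2$ is dense in $H^1$, we may regard $(H^1)^\ast\hookrightarrow(H^2)^\ast$ as injective. The bound on $\bF^\e$ in $L^\infty(S;L^2)$ together with $\partial_t\bF^\e$ bounded in $L^2(S;(H^2)^\ast)$ then gives relative compactness in $C([0,T];(H^1)^\ast)$, hence in $L^2(S;(H^1)^\ast)$ (Simon's version). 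The limit is identified with $\bF$ by uniqueness of distributional limits.

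For \eqref{eq:sqrt-eps-weak}, $\sqrt\e\nabla q^\e$ is bounded in $L^2$, so $\e\nabla q^\e=\sqrt\e(\sqrt\e\nabla q^\e)\to0$ strongly in $L^2$. Hence any weak limit $\bm g$ of $\sqrt\e\nabla q^\e$ satisfies, for smooth compactly supported $\bm\phi$,
\[
\inso \sqrt\e\,\nabla q^\e\cdot\bm\phi=-\sqrt\e\inso q^\e\,\dv\bm\phi\to0 ,
\]
so $\bm g=0$, and the whole subsequence converges weakly to $0$. The same argument applies to $\bF^\e$.

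In the tensor equation we take test functions $\bPS\in C^1([0,T];H^2(\O;\bbS_0))$, or more generally in $L^2(S;H^2)$, and check each term:
\begin{itemize}
\item the time-derivative term converges by weak convergence of $\partial_t\bF^\e$ in $L^2(S;(H^2)^\ast)$;
\item the diffusion term is bounded by $\sqrt\e\,\|\sqrt\e\nabla\bF^\e\|\,\|\nabla\bPS\|\to0$;
\item the damping term is linear and passes by weak convergence;
\item the transport term $\bF^\e:(\bv^\e\cdot\nabla)\bPS$ is a product of a weakly convergent factor $\bF^\e\wc\bF$ in $L^2(S;L^2)$ and a strongly convergent factor $(\bv^\e\cdot\nabla)\bPS$, which converges strongly in $L^2(S;L^2)$ because $\bv^\e\to\bv$ in $L^2(S;L^6)$-weak/$L^2(S;L^4)$-strong and $\nabla\bPS\in L^\infty(S;L^3)$, i.e.\ $L^4\cdot L^3$ lands in $L^{12/7}$. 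For this term I would instead test with $\bPS$ smooth in $x$, so that $\nabla\bPS\in L^\infty$ and the product converges in $L^2$.
\end{itemize}

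The remaining two terms are products of two merely weakly convergent sequences. In $(q^\e-1)\bD(\bv^\e)$, $q^\e$ converges only weakly-$\ast$ and $\bD(\bv^\e)$ only weakly. In the Jaumann term, $\bF^\e$ converges strongly only in a negative space while $\nabla\bv^\e$ converges only weakly in $L^2$, so one cannot pair the two. This is precisely the exclusion stated in the theorem. The main technical point I expect is the compactness in \eqref{eFstrong-Hneg}: one must arrange the dual spaces correctly and verify that the $\bbS_0$-valued constraint (symmetric and trace-free) is preserved under the duality identifications.
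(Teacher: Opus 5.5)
Your proposal is correct and follows the paper's proof step by step: Banach--Alaoglu for the weak limits, the pointwise bounds of Lemma~\ref{lem:qbdd} for $q\in L^\infty(\som)$, Aubin--Lions--Simon with $L^2(\O;\bbS_0)\hookrightarrow\hookrightarrow(H^1(\O;\bbS_0))^\ast\hookrightarrow(H^2(\O;\bbS_0))^\ast$ for $\bF^\e$, integration by parts against compactly supported fields for \eqref{eq:sqrt-eps-weak}, and a term-by-term check in \eqref{dweak3}. The one slip is in the transport term: in three dimensions $H^2(\O)\hookrightarrow W^{1,6}(\O)$ (the paper uses $W^{1,4}$), not merely $W^{1,3}$, so for every $\bPS\in L^2(S;H^2(\O;\bbS_0))$ you get $\bv^\e\cdot\nabla\bPS\to\bv\cdot\nabla\bPS$ strongly in $L^1(S;L^2(\O))$ from \eqref{evstrong} with $p=4$; pairing this with the weak-$\ast$ convergence of $\bF^\e$ in $L^\infty(S;L^2)$, rather than weak convergence in $L^2(S;L^2)$, handles the full test class used later in Theorem~\ref{thm:defect-limit}, so you do not need to restrict to spatially smooth $\bPS$.
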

	
	\begin{proof}
		\textbf{Step 1: Compactness.}
		
		The weak and weak-$\ast$ convergences \eqref{evweak}--\eqref{eqweak} follow, up to a subsequence, from the uniform bounds \eqref{U_v_bdd}--\eqref{U_F_bdd} by the Banach--Alaoglu theorem.
		
		The strong convergence \eqref{evstrong} follows from the Aubin--Lions--Simon lemma, since $\bv^\varepsilon$ is bounded in $L^2(S;H^1_{0,\div}(\Omega)^3)$, $\partial_t\bv^\varepsilon$ is bounded in $L^{4/3}(S;(H^1_{0,\div}(\Omega)^3)^*)$, and the embedding
		\[
		H^1_{0,\div}(\Omega)^3\hookrightarrow\hookrightarrow L^p(\Omega)^3,
		\qquad 1\le p<6,
		\]
		is compact.
		Similarly, \eqref{eFstrong-Hneg} follows from the Aubin--Lions--Simon lemma, since $\bF^\varepsilon$ is bounded in $L^2(S;L^2(\Omega;\bbS_0))$ and $\partial_t\bF^\varepsilon$ is bounded in $L^2(S;(H^{2}(\Omega;\bbS_0))^\ast)$, with
		\[
		L^2(\Omega;\bbS_0)\hookrightarrow\hookrightarrow (H^{1}(\Omega;\bbS_0))^\ast
		\hookrightarrow (H^{2}(\Omega;\bbS_0))^\ast.
		\]
		Finally, we prove the convergences \eqref{eq:sqrt-eps-weak}. Let
		$\bm{\phi}\in C_c^\infty(\som)^3$. Since $\bm{\phi}$ has compact support, no
		boundary term arises in the integration by parts
		\[
		\inso \sqrt{\e}\,\nabla q^\e\cdot\bm{\phi}
		= -\inso \sqrt{\e}\,q^\e\,\dv\bm{\phi} ,
		\]
		and the bound \eqref{U_q_bdd} gives
		$\|\sqrt{\e}\,q^\e\|_{L^2(\som)}\le\sqrt{\e}\,C\to0$, so the right-hand side
		vanishes as $\e\to0$. Hence $\sqrt{\e}\,\nabla q^\e\to0$ in
		$\mathcal{D}'(\som)$. Since \eqref{U_q_bdd} also bounds
		$\sqrt{\e}\,\nabla q^\e$ in $L^2(\som)^3$ uniformly in $\e$, and
		$C_c^\infty(\som)^3$ is dense in $L^2(\som)^3$, every weak limit point agrees
		with the distributional limit, whence $\sqrt{\e}\,\nabla q^\e\wc 0$ weakly in $L^2(\som)^3$.
		The same argument applied componentwise to $\bF^\e$, using \eqref{U_F_bdd},
		gives $\sqrt{\e}\,\nabla\bF^\e\wc 0$ weakly in
		$L^2(\som;\bbS_0)^3$. 
		
		By \eqref{q_bdd}, $\{q^\e\}$ is bounded in $L^\infty(\som)$ uniformly in $\e$.
		Hence, up to a further subsequence, $q^\e\overset{\ast}{\wc}q$ weakly-$\ast$
		in $L^\infty(\som)$. Since $\{g: c_2\le g\le c_3\text{ a.e.}\}$ is convex and
		closed, it is weakly-$\ast$ closed, so $c_2\le q\le c_3$ a.e. in $\som$.
		
		\textbf{Step 2: Limit in the momentum equation.}
		By the same argument as Step~6 of Theorem~\ref{Thm_existence}, using \eqref{e_v_dual}, 
		the strong convergence \eqref{evstrong}, and the weak convergence \eqref{eFweak}, 
		the limit $\varepsilon\to 0$ passes in \eqref{dweak1}. Therefore, $\bv$ satisfies the weak form of \eqref{Dmodel}$_1$.
		
		\textbf{Step 3: Limit in the scalar equation.}
		
		The dual estimate \eqref{e_q_dual} yields
		\[
		\int_S\langle\partial_t q^\varepsilon,\phi\rangle
		\longrightarrow
		\int_S\langle\partial_t q,\phi\rangle.
		\]
		Moreover, the estimate \eqref{U_q_bdd} implies that $\sqrt{\varepsilon}\,\nabla q^\varepsilon$ is bounded in $L^2(S;L^2(\Omega))^3,$ and therefore,
		\[
		\varepsilon\int_{S\times\Omega}
		\nabla q^\varepsilon\cdot\nabla\phi
		\longrightarrow0.
		\]
		For the transport term, we decompose
		\[
		\int_{S\times\Omega}
		q^\varepsilon\bm{v}^\varepsilon\cdot\nabla\phi
		=
		\int_{S\times\Omega}
		q^\varepsilon(\bm{v}^\varepsilon-\bm{v})\cdot\nabla\phi
		+
		\int_{S\times\Omega}
		q^\varepsilon\bm{v}\cdot\nabla\phi.
		\]
		The uniform bound \eqref{q_bdd}, together with the strong convergence \eqref{evstrong} and the weak-$\ast$ convergence of $q^\e$ in $L^\infty(\som)$ (note $\bv\cdot\nabla\phi\in L^1(\som)$), implies
		\[
		\int_{S\times\Omega}
		q^\varepsilon(\bm{v}^\varepsilon-\bm{v})\cdot\nabla\phi
		\longrightarrow0,
		\quad
		\int_{S\times\Omega}
		q^\varepsilon\bm{v}\cdot\nabla\phi
		\longrightarrow
		\int_{S\times\Omega}
		q\,\bm{v}\cdot\nabla\phi,
		\quad\text{and}\quad \int_{S\times\Omega}
		q^\varepsilon\phi
		\longrightarrow
		\int_{S\times\Omega}
		q\,\phi.
		\]
		Consequently, passing to the limit in the weak formulation of \eqref{dweak2} shows that $q$ satisfies \eqref{Dmodel}$_2$ in the weak sense.
		
		\textbf{Step 4: Limit in the tensor equation.}
		
		The uniform estimate \eqref{U_F_bdd} on $\partial_t\bm{F}^\varepsilon$, together with the strong convergence \eqref{eFstrong-Hneg}, implies
		\[
		\partial_t\bm{F}^\varepsilon
		\rightharpoonup
		\partial_t\bm{F}
		\quad\text{weakly in }
		L^2\big(S;(H^2(\Omega;\mathbb{S}_0))^*\big),
		\]
		where the limit is identified by integration by parts in time. Consequently,
		\[
		\int_S
		\langle\partial_t\bm{F}^\varepsilon,\bPS\rangle\,dt
		\longrightarrow
		\int_S
		\langle\partial_t\bm{F},\bPS\rangle\,dt,
		\qquad
		\forall\,\bPS\in L^2\big(S;H^2(\Omega;\mathbb{S}_0)\big).
		\] 
		Moreover, the Sobolev embedding $H^2(\Omega)\hookrightarrow W^{1,4}(\Omega)$
		yields $\nabla\bPS\in L^2(S;L^4(\Omega;\bbS_0))$. Together with the strong convergence \eqref{evstrong},
		\[
		\bm{v}^\varepsilon \sc \bm{v}
		\quad\text{strongly in }L^2(S;L^4(\Omega)^3),
		\]
		we obtain
		\[
		\bm{v}^\varepsilon\cdot\nabla\bPS
		\longrightarrow
		\bm{v}\cdot\nabla\bPS
		\quad\text{strongly in }L^1(S;L^2(\Omega;\bbS_0)).
		\]
		Therefore, using the weak-$\ast$ convergence \eqref{eFweak}, we conclude
		\[
		\int_{S\times\Omega}
		\bm{F}^\varepsilon:(\bm{v}^\varepsilon\cdot\nabla\bPS)
		\longrightarrow
		\int_{S\times\Omega}
		\bm{F}:(\bm{v}\cdot\nabla\bPS).
		\]
		In addition, the uniform estimate \eqref{U_F_bdd} gives $\sqrt{\e}\,\nabla\bF^\e$ bounded in $L^2(S;L^2(\Omega;\bbS_0))$, hence
		\[
		\varepsilon\int_{S\times\Omega}
		\nabla\bF^\varepsilon:\nabla\bPS\,dx\,dt
		\longrightarrow 0,
		\]
		while the weak convergence \eqref{eFweak} yields
		\[
		\int_{S\times\Omega}
		\bF^\varepsilon:\bPS\,dx\,dt
		\longrightarrow
		\int_{S\times\Omega}
		\bF:\bPS\,dx\,dt.
		\]
		Passing to the limit in the tensor equation, all terms can be identified except the nonlinear terms $$ (q^\e-1)\bD(\bv^\e)\quad\text{and}\quad \bF^\e\bW(\bv^\e)-\bW(\bv^\e)\bF^\e .$$ We therefore cannot yet conclude that $\bF$ satisfies \eqref{Dmodel}$_3$; the strong convergence of $q^\e$ is established next.
	\end{proof}
	
	
	\begin{theorem}
		\label{thm:qstrong}
		Let $(\bv^\e,q^\e,\bF^\e)$ be as in Lemma~\ref{e_Ues} and $(\bv,q)$ the limit in Theorem~\ref{Compactness}. Then  
		\begin{equation}\label{eqstrong}
			q^\e \sc q \quad \text{strongly in } L^2(\som).
		\end{equation}
		The limit $q\in L^\infty(S\X\O)\cap H^1(S;(H^{1}(\O))^\ast)$ is a weak solution of \eqref{Dmodel}$_2$, that is,
		\begin{equation}\label{q_eq}
			\ins \langle \partial_t q, \phi \rangle  - \int_{\som} q \bv \cdot \nabla \phi + \alpha \int_{\som} q \phi = 3\beta \int_{\som} \phi,
		\end{equation}
		for all $\phi \in L^2(0, T; H^1(\Omega))$. Moreover, for a.e.~$t \in (0, T)$,
		\begin{equation}\label{q_norm}
			\frac{1}{2} \|q(t)\|_{L^2(\Omega)}^2 + \alpha \int_0^t \|q(s)\|_{L^2(\Omega)}^2= \frac{1}{2} \|q_0\|_{L^2(\Omega)}^2 + 3\beta \int_{Q_t} q .
		\end{equation}
		In addition, $q^\e\to q$ strongly in $L^p(\som)$ for every finite $p \geq 1$.
	\end{theorem}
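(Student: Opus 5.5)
The plan is the classical route: lower semicontinuity of norms plus convergence of the energy, i.e.\ weak convergence together with $\limsup\|q^\e\|\le\|q\|$ gives strong convergence. The weak formulation \eqref{q_eq} is already Step~3 of Theorem~\ref{Compactness}. The regularity $q\in H^1(S;(H^1(\O))^\ast)$ follows from \eqref{e_q_dual} by weak lower semicontinuity, and $q\in L^\infty$ with $c_2\le q\le c_3$ was also shown there. Moreover $q(0)=q_0$, because $q^\e\to q$ in $C([0,T];(H^1)^\ast)$ by Aubin--Lions--Simon; this uses the bounds on $q^\e$ in $L^\infty(L^2)$ and on $\partial_t q^\e$ in $L^2((H^1)^\ast)$.

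\textbf{Step 1: the energy identity \eqref{q_norm} for the limit.} This is the main obstacle, because $q$ has no spatial regularity, so $q$ cannot be used directly as a test function in the transport term. I would apply the DiPerna--Lions commutator lemma \cite{DiPernaLions1989}. Mollify in space, $q_\delta:=q*\eta_\delta$, using a localization adapted to the bounded Lipschitz domain: interior cut-offs $\chi_k\in C_c^\infty(\O)$ combined with the boundary condition $\bv=0$. Alternatively, extend $\bv$ by zero, which keeps $\bv\in L^2(S;H^1(\R^3))$ divergence free, extend $q$ by a constant or by zero, and check that the equation holds on $\R^3$ up to a controllable term.

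The mollified equation reads
\[
\partial_t q_\delta+\bv\cdot\nabla q_\delta+\alpha q_\delta=3\beta+r_\delta,
\qquad r_\delta:=\bv\cdot\nabla q_\delta-(\bv\cdot\nabla q)*\eta_\delta .
\]
Since $q\in L^\infty\subset L^2$ and $\nabla\bv\in L^2$, the commutator lemma gives $r_\delta\to0$ in $L^1_{\mathrm{loc}}$; the $L^\infty$ bound on $q$ is exactly what makes $r_\delta$ integrable against $q_\delta$. Next I test with $q_\delta$ (times a cut-off). The transport term integrates to zero by $\dv\bv=0$, and the cut-off produces the error $\int \frac{q_\delta^2}{2}\,\bv\cdot\nabla\chi_k$. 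Let $\delta\to0$ first, then $k\to\infty$. The boundary-layer term is bounded by $\|q\|_\infty^2\int|\bv||\nabla\chi_k|$, and it vanishes by the Hardy inequality $\bv/\mathrm{dist}(x,\partial\O)\in L^2$, valid for $H^1_0$ functions on Lipschitz domains. This yields \eqref{q_norm} for a.e.\ $t$; equivalently, $q$ is renormalized.

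\textbf{Step 2: strong convergence.} Write \eqref{eq:EEQ} for $q^\e$ and drop the nonnegative term $\e\|\nabla q^\e\|^2$:
\[
\tfrac12\|q^\e(t)\|^2+\alpha\|q^\e\|_{L^2(Q_t)}^2\le\tfrac12\|q_0\|^2+3\beta\int_{Q_t}q^\e .
\]
The right-hand side converges to $\tfrac12\|q_0\|^2+3\beta\int_{Q_t}q$ by weak convergence. Comparing with \eqref{q_norm} at $t=T$ (or integrating the inequality in $t$ over $S$) gives
\[
\limsup\Bigl(\tfrac12\|q^\e(T)\|^2+\alpha\|q^\e\|_{L^2(\som)}^2\Bigr)\le\tfrac12\|q(T)\|^2+\alpha\|q\|_{L^2(\som)}^2 .
\]
Here $q^\e(T)\rightharpoonup q(T)$ weakly in $L^2$, using $C([0,T];(H^1)^\ast)$ convergence and the uniform $L^2$ bound, and the limit equation makes $q$ weakly continuous. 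Weak lower semicontinuity of each term then forces $\|q^\e\|_{L^2(\som)}\to\|q\|_{L^2(\som)}$. To avoid fixing $T$, I would instead integrate the inequality over $t\in S$, which gives the same conclusion for $\int_0^T\|q^\e(t)\|^2dt$ directly. Together with weak convergence this gives \eqref{eqstrong}.

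\textbf{Step 3: all $L^p$.} For $p\le 2$ use H\"older on the bounded set $\som$. For $2<p<\infty$ interpolate with the uniform bound $\|q^\e-q\|_{L^\infty}\le 2c_3$:
\[
\|q^\e-q\|_{L^p}^p\le (2c_3)^{p-2}\|q^\e-q\|_{L^2}^2\to0 .
\]
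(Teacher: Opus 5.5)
Your proposal is correct and follows the paper's overall strategy: DiPerna--Lions renormalization gives the energy identity \eqref{q_norm} for the limit, comparison with \eqref{eq:EEQ} upgrades weak to strong convergence, and the $L^\infty$ bound gives all $L^p$. Two steps are implemented differently.

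\textbf{The identity \eqref{q_norm}.} The paper does not localize. Because \eqref{q_eq} holds for all $\phi\in H^1(\O)$, not only for $\phi\in H^1_0(\O)$, and because $\bv\in H^1_0$, the zero extensions satisfy $\p_t\bar q+\operatorname{div}(\bar\bv\bar q)+\alpha\bar q=3\beta\chi_\O$ exactly on $S\times\R^3$. The global commutator result (Theorem~\ref{ap_rdelta}) then applies, and testing with $q_\delta$ on $\O$ leaves no boundary-layer term. Your interior cut-off route uses only the equation in the interior. The price is the term $\int\frac{q^2}{2}\bv\cdot\nabla\chi_k$, which you correctly remove with Hardy's inequality on the Lipschitz domain. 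This requires building $\chi_k$ from a regularized distance, so that $|\nabla\chi_k|\lesssim 1/\operatorname{dist}(x,\p\O)$ on its support.

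\textbf{Strong convergence.} The paper compares the two energy relations at a.e.\ fixed $t$. It uses $q^\e(t)\rightharpoonup q(t)$ in $L^2(\O)$ for every $t$ to handle $\tfrac12\|q^\e(t)\|^2$ by lower semicontinuity. This gives convergence in $L^2(Q_t)$, and the tail bound $4c_3^2|\O|(T-t)$ closes the argument. Your time-integrated version instead gives convergence of the weighted norm $\int_{\som}\bigl(\tfrac12+\alpha(T-s)\bigr)|q^\e|^2$. Together with weak convergence in $L^2(\som)$ and weak lower semicontinuity of each weighted piece, this yields \eqref{eqstrong} directly. It needs \eqref{q_norm} only for a.e.\ $t$ and dispenses with the pointwise-in-time weak limits and the tail estimate. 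You were right to abandon the comparison at $t=T$ alone, since \eqref{q_norm} is only available for a.e.\ $t$.
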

	\begin{proof}
		First, \eqref{q_eq} was obtained in Step~3 of the proof of Theorem~\ref{Compactness}.
		
		By \eqref{q_eq},
		$$\partial_t q =-\nabla\cdot( q\bv) - \alpha q +3\beta \in L^2(S;(H^1(\Omega))^\ast).$$ 
		
		Since the limit $q \in L^{\infty}(S \times \Omega) \cap H^{1}(S; (H^1(\Omega))^\ast)$
		does not in general belong to $L^{2}(S; H^{1}(\Omega))$, one cannot directly test the weak formulation \eqref{q_eq} by $q$. We therefore use the DiPerna--Lions regularization technique \cite{DiPernaLions1989}.
		
		Extend $q$ and $\bv$ by zero outside $\Omega$ and define the
		spatial mollification
		\begin{equation}\label{q_delta_def}
			q_{\delta}(t, x) := (q(t,\cdot) * \eta_{\delta})(x)
			= \int_{\R^3} q(t, y)\,\eta_{\delta}(x - y)\,dy,
			\qquad (t,x) \in S \times \R^3.
		\end{equation}
		Since $q \in L^{\infty}(S \times \Omega)$, standard mollification results give  
		\begin{align}
			q_{\delta} & \in L^{\infty} (S; W^{k,p}(\Omega))\quad \text{for all  $ k \geq 0,\, 1 \leq p \leq \infty$}, \quad \text{in particular $q_{\delta} \in L^{2}(S; H^{1}(\Omega))$}, \label{q_d}\\
			q_{\delta} &\sc q \quad \text{strongly in } L^{p}(S \times \Omega),
			\quad \text{for every finite } p \geq 1, \label{q_delta_conv}\\
			\quad q_{\delta}(t) & \sc q(t) \quad \text{strongly in } L^{p}(\Omega),
			\quad \text{for a.e. } t \in S. \label{q_delta_ptwise}
		\end{align}
		By \eqref{q_eq} tested with $\phi|_\Omega$, $\phi\in C_c^\infty(S\times\R^3)$,
		the extensions satisfy
		$\p_t\bar q+\operatorname{div}(\bar\bv\bar q)+\alpha\bar q=3\beta\chi_\Omega$
		in $\mathcal D'(S\times\R^3)$, where
		$\operatorname{div}(\bar\bv\bar q)=\bar\bv\cdot\nabla\bar q$ in the sense of
		\eqref{rdelta_dist} because $\operatorname{div}\bar\bv=\chi_\Omega\operatorname{div}\bv=0$.
		Mollifying this equation in the spatial variable yields
		\begin{equation}\label{q_delta_eq}
			\partial_{t}q_{\delta}
			+ \boldsymbol{v} \cdot \nabla q_{\delta}
			+ \alpha q_{\delta}
			= 3\beta(\chi_\Omega*\eta_\delta) 
			+ r_{\delta},\quad \text{for a.e.}\ t \in S,
		\end{equation}
		where the commutator error is defined by
		\begin{equation}\label{commutator_def}
			r_{\delta}
			:= \boldsymbol{v} \cdot \nabla q_{\delta}
			- \bigl(\boldsymbol{v} \cdot \nabla q\bigr) * \eta_{\delta}
			= \bigl(\boldsymbol{v} \cdot \nabla q_{\delta}\bigr)
			- \bigl(\boldsymbol{v} \cdot \nabla q\bigr)_{\delta}.
		\end{equation}
		Since $\bv\in L^2(S;H^1_0(\O)^3)$ and $q\in L^\infty(\som)$, Theorem~\ref{ap_rdelta} gives
		\begin{equation}\label{commutator_conv}
			r_{\delta} \sc 0
			\quad \text{strongly in } L^{1}(S; L^{2}(\Omega)),
			\quad \text{as } \delta \sc 0.
		\end{equation}
		Since $q_{\delta} \in L^{2}(S; H^{1}(\Omega))$ and
		$\partial_{t}q_{\delta} \in L^{2}(S; (H^1(\Omega))^\ast)$
		(because $\p_t q_\delta=(\p_t\bar q)*\eta_\delta$ and $\p_t\bar q\in L^2(S;H^{-1}(\R^3))$ by the extended equation),
		the Lions--Magenes lemma \cite{lionsMagenes1972} applies and yields
		\begin{equation}\label{LM_chain}
			\frac{1}{2}\frac{d}{dt}\|q_{\delta}(t)\|^{2}_{L^{2}(\Omega)}
			= \langle \partial_{t}q_{\delta},\, q_{\delta}
			\rangle_{(H^1(\Omega))^\ast,\, H^{1}(\Omega)},
			\qquad \text{in the distributional sense}.
		\end{equation}
		Using integration by parts, along with the incompressibility condition \eqref{Dmodel}$_4$ and boundary condition \eqref{Dmodel}$_6$, we obtain
		\begin{equation*}
			\int_{\Omega} (\boldsymbol{v} \cdot \nabla q_{\delta})\, q_{\delta}\,dx
			= \frac{1}{2}\int_{\Omega} \boldsymbol{v} \cdot \nabla(q_{\delta}^{2})\,dx
			= -\frac{1}{2}\int_{\Omega} (\nabla \cdot \boldsymbol{v})\, q_{\delta}^{2}\,dx = 0.
		\end{equation*}
		Testing \eqref{q_delta_eq} by $q_{\delta}$ over $Q_t$ yields
		\begin{equation}\label{energy_delta}
			\frac{1}{2}\|q_{\delta}(t)\|^{2}_{L^{2}(\Omega)}
			+ \alpha \int_{0}^{t}\|q_{\delta}(s)\|^{2}_{L^{2}(\Omega)}
			= \frac{1}{2}\|q_{\delta}(0)\|^{2}_{L^{2}(\Omega)}
			+3\beta\int_{Q_t}(\chi_\Omega*\eta_\delta)\,q_\delta
			+ \int_{0}^{t}
			\langle r_{\delta}(s),\, q_{\delta}(s)
			\rangle_{(H^1)^\ast,\, H^{1}}.
		\end{equation}
		We pass to the limit $\delta\to0$ in each term of \eqref{energy_delta}.
		
		Since $q_{\delta}(0) = q_{0} * \eta_{\delta}$, \eqref{q_delta_conv} gives
		$$\quad \|q_{\delta}(0)\|^{2}_{L^{2}(\Omega)}
		\sc \|q_{0}\|^{2}_{L^{2}(\Omega)}.$$
		Since $|q_{\delta}| \leq \|q\|_{L^{\infty}} < \infty$, the dominated convergence theorem together with \eqref{q_delta_ptwise} and \eqref{q_delta_conv} yields
		$$\int_{0}^{t}\|q_{\delta}(s)\|^{2}_{L^{2}(\Omega)}\,ds
		\sc \int_{0}^{t}\|q(s)\|^{2}_{L^{2}(\Omega)} \quad\text{and}\quad \|q_{\delta}(t)\|^{2}_{L^{2}(\Omega)}
		\sc \|q(t)\|^{2}_{L^{2}(\Omega)}
		\quad \text{for a.e.\ } t \in S.$$
		Also, $q_{\delta}$ is uniformly bounded in $L^{\infty}(S; L^{2}(\Omega))$, hence by \eqref{commutator_conv},
		$$\left|\int_{0}^{t}
		\langle r_{\delta}(s),\, q_{\delta}(s)\rangle_{(H^1)^\ast,\, H^{1}}\,ds\right|
		\leq  { \|r_{\delta}\|_{L^{1}(S; L^{2}(\Omega))}}
		\|q_{\delta}\|_{L^{\infty}(S; L^{2}(\Omega))} \sc 0.$$
		Moreover, since $\chi_\Omega*\eta_\delta\to\chi_\Omega$ in $L^1(\R^3)$ and
		$|q_\delta|\le\|q\|_{L^\infty}$, we have
		$3\beta\int_{Q_t}(\chi_\Omega*\eta_\delta)q_\delta\to3\beta\int_{Q_t}q$.
		Combining the above convergences, we pass to the limit $\delta \to 0$ in \eqref{energy_delta} to obtain \eqref{q_norm}.

		We prove \eqref{eqstrong}. Since $q^\e\wc q$ weakly in
		$L^2(\som)$, it suffices to prove convergence of the norms.
		
		Proceeding as in \eqref{En_q}, equation \eqref{dweak2} yields
		$$\frac{1}{2}\ddt\|q^\e\|^2_{L^2(\om)}+\e\|\nabla q^\e\|^2_{L^2(\om)}+\alpha\|q^\e\|^2_{\ltwo(\om)}= 3\beta \ino q^\e,\quad\text{for a.e. $t\in(0,T)$}.$$
		Integrating from $0$ to $t$, where $t\in[0,T]$, we obtain
		$$\frac{1}{2}\|q^\e(t)\|^2_{L^2(\om)}+\e\int_0^t\|\nabla q^\e\|^2_{L^2(\om)}+\alpha\int_0^t\|q^\e\|^2_{\ltwo(\om)}= \frac{1}{2}\|q_0\|^2_{L^2(\om)}+3\beta \int_{Q_t} q^\e .$$
		By \eqref{Rmodel+}$_5$, $q^\e(0)=q_0$ does not depend on $\e$, and the last term
		is linear in $q^\e$. We drop the nonnegative term $\e\int_0^t\|\nabla q^\e\|^2_{L^2(\om)}$
		and use \eqref{eqweak}. Comparing with \eqref{q_norm} then gives, for a.e.\ $t\in S$,
		\begin{equation}\label{limsup_q}
			\limsup_{\e\to 0}\Big(\frac{1}{2}\|q^\e(t)\|^2_{L^2(\om)}+\alpha\int_0^t\|q^\e\|^2_{\ltwo(\om)}\Big)
			\le \frac{1}{2}\|q_0\|^2_{L^2(\om)}+3\beta \int_{Q_t} q
			=\frac{1}{2} \|q(t)\|_{L^2(\Omega)}^2 + \alpha \int_0^t \|q\|_{L^2(\Omega)}^2 .
		\end{equation}
		By \eqref{U_q_bdd}, \eqref{e_q_dual} and the compact embedding
		$L^2(\om)\hookrightarrow\hookrightarrow(H^1(\om))^\ast$, the Aubin--Lions--Simon
		lemma gives $q^\e\to q$ in $C([0,T];(H^1(\om))^\ast)$. Together with the
		uniform bound in $L^\infty(S;L^2(\om))$, this yields $q^\e(t)\wc q(t)$ weakly
		in $L^2(\om)$ for every $t\in[0,T]$, so that
		$\|q(t)\|_{L^2(\om)}\le\liminf_{\e\to0}\|q^\e(t)\|_{L^2(\om)}$. Inserting this
		into \eqref{limsup_q} gives
		$$\limsup_{\e\to0}\|q^\e\|_{L^2(Q_t)}\le\|q\|_{L^2(Q_t)}\qquad\text{for a.e. } t\in S,$$
		and with the weak convergence \eqref{eqweak}, $q^\e\to q$ strongly in
		$L^2(Q_t)$ for a.e.\ $t\in S$. Finally, $|q^\e|,|q|\le c_3$ by \eqref{q_bdd} and Step 1 of Theorem~\ref{Compactness}, so
		$$\|q^\e-q\|^2_{L^2(\som)}\le\|q^\e-q\|^2_{L^2(Q_t)}+4c_3^2|\om|(T-t).$$
		Letting first $\e\to0$ and then $t\uparrow T$ along such $t$ gives \eqref{eqstrong}.

		Moreover, \eqref{eqstrong} and \eqref{eqweak} together with the uniform boundedness of $q^\e$ \eqref{q_bdd} imply $q^\e \sc q$ strongly in $L^p(\som)$ for every finite $p \geq 1$.
	\end{proof}

	\begin{theorem}[Defect-limit system]
		\label{thm:defect-limit}
		Assume that the triplet $(\bv^\e,q^\e,\bF^\e)$ satisfies the hypotheses of Theorem \ref{Compactness}.
		Let $(\bv,q,\bF)$ be the limit and subsequence given by Theorem~\ref{Compactness}.
		Then there exists a defect
		$$\bDc \in L^2\bigl(S;(H^2(\Omega;\bbS_0))^\ast\bigr)$$
		such that for all $\vp\in L^4(S;H_{0,\div}^1(\Omega)^3)$, $\phi\in L^2(S;H^1(\Omega))$, and $\bPS\in  L^2(S;H^2(\Omega;\bbS_0)),$
		the following hold:
		\begin{equation}\label{vlim}
			\rho \ins \langle \partial_t \bv, \vp \rangle - \rho \inso (\bv  \otimes \bv) : \nabla \vp + \mu_1 \inso \bD(\bv) : \bD(\vp) + \lambda \inso \bF : \bD(\vp) = \ins \langle \bff, \vp \rangle,
		\end{equation}
		\begin{equation}\label{qlim}
			\ins \langle \partial_t q, \phi \rangle  - \inso q \bv \cdot \nabla \phi + \alpha \inso q \phi = 3\beta \inso \phi,
		\end{equation}
		\begin{multline}\label{def_lim}
			\ins \langle\p_t\bF,\bPS\rangle - \inso \bF : (\bv \cdot \nabla)\bPS
			+ \inso (q - 1)\bD(\bv) : \bPS + \inso (\bF \bW(\bv) - \bW(\bv) \bF) : \bPS \\
			+\alpha \inso \bF : \bPS 
			+\int_0^T \langle \bDc(t),\bPS(t)\rangle_{(H^2)^\ast,H^2}\,dt
			=0.
		\end{multline}
	\end{theorem}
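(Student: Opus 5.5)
The plan is to collect the limits already obtained and to define the defect as whatever is left of the Jaumann commutator. Equations \eqref{vlim} and \eqref{qlim} need no new work: \eqref{vlim} is Step~2 of the proof of Theorem~\ref{Compactness}, and \eqref{qlim} is \eqref{q_eq} in Theorem~\ref{thm:qstrong}. For the tensor equation I take the weak formulation \eqref{dweak3} with $\bPS\in L^2(S;H^2(\Omega;\bbS_0))$. This is an admissible test function after a density argument, since every term in \eqref{dweak3} is continuous in $\bPS$ for this topology by the estimates of Step~2 of Lemma~\ref{e_Ues}. By Step~4 of Theorem~\ref{Compactness}, the time derivative, the diffusion, the transport and the damping terms all converge to their natural limits.

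The stretching term is handled next, using Theorem~\ref{thm:qstrong}. Since $q^\e\to q$ strongly in $L^p(\som)$ for every $p<\infty$ and $|q^\e|\le c_3$, dominated convergence gives $(q^\e-1)\bPS\to(q-1)\bPS$ strongly in $L^2(\som)$. This holds because $\bPS\in L^2(S;L^\infty(\Omega))$ by \eqref{embed_2}. Together with $\bD(\bv^\e)\rightharpoonup\bD(\bv)$ weakly in $L^2(\som)$, this identifies the limit as $\inso(q-1)\bD(\bv):\bPS$.

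For the Jaumann term, set $\bJc^\e:=\bF^\e\bW(\bv^\e)-\bW(\bv^\e)\bF^\e$ and define $\bJc^\e_\ast\in L^2(S;(H^2(\Omega;\bbS_0))^\ast)$ by $\langle\bJc^\e_\ast(t),\bPS\rangle:=\ino\bJc^\e(t):\bPS$. The estimate in Lemma~\ref{e_Ues} gives
\[
\|\bJc^\e_\ast(t)\|_{(H^2)^\ast}\le C\|\bF^\e(t)\|_{L^2}\|\nabla\bv^\e(t)\|_{L^2},
\]
which is bounded in $L^2(S)$ uniformly in $\e$ by \eqref{eUE}. The space $L^2(S;(H^2(\Omega;\bbS_0))^\ast)$ is a reflexive Hilbert space, so along a further subsequence (not relabeled) $\bJc^\e_\ast\rightharpoonup\bm{\mathcal J}_\ast$ weakly. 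The limit product $\bF\bW(\bv)-\bW(\bv)\bF$ lies in $L^2(S;L^1(\Omega))$ and therefore also defines an element of $L^2(S;(H^2)^\ast)$ via $H^2\hookrightarrow L^\infty$. I then define
\[
\bDc:=\bm{\mathcal J}_\ast-\bigl(\bF\bW(\bv)-\bW(\bv)\bF\bigr),
\]
so $\bDc\in L^2(S;(H^2(\Omega;\bbS_0))^\ast)$. Passing to the limit in \eqref{dweak3} term by term then yields \eqref{def_lim} exactly.

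The main point to check is the time-derivative term, namely that the weak limit of $\partial_t\bF^\e$ is $\partial_t\bF$ with the correct initial datum. This follows from \eqref{e_F_dual} and \eqref{eFstrong-Hneg} through integration by parts in time against $C_c^\infty$-in-time test functions, as in Step~4 of Theorem~\ref{Compactness}. The genuine conceptual obstacle is that $\bm{\mathcal J}_\ast$ cannot be identified, because both factors converge only weakly. The plan therefore avoids identifying it and simply records it as $\bDc$. The remaining care needed is to ensure that the additional subsequence extraction for $\bJc^\e_\ast$ is compatible with the subsequence of Theorem~\ref{Compactness}, which holds because a subsequence of a convergent subsequence preserves all earlier limits.
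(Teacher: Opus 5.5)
Your proposal is correct and follows the paper's proof essentially step for step. The stretching term is identified by pairing the strong convergence $(q^\e-1)\bPS\to(q-1)\bPS$ in $L^2(\som)$ against $\bD(\bv^\e)\rightharpoonup\bD(\bv)$, and $\bDc$ is defined as the weak limit of the Jaumann commutator in $L^2(S;(H^2(\Omega;\bbS_0))^\ast)$ minus $\bF\bW(\bv)-\bW(\bv)\bF$. Your density argument admitting $\bPS\in L^2(S;H^2(\Omega;\bbS_0))$ in \eqref{dweak3} makes explicit a step the paper leaves implicit; the initial-datum remark is not needed, since \eqref{def_lim} does not encode it, and $C_c^\infty$-in-time test functions would not capture it anyway.
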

	\begin{proof}
		The identities \eqref{vlim} and \eqref{qlim} follow as in
		Theorem~\ref{Compactness}, so we prove only \eqref{def_lim}.
		
		\textbf{Step 1.} We prove 
		\begin{equation}\label{eqDv_lim}
			\inso (q^\e - 1)\bD(\bv^\e) :\bPS\sc \inso (q - 1)\bD(\bv) : \bPS
		\end{equation}
		We write
		$$\inso (q^\e - 1)\, \bD(\bv^\e) : \bPS \, dx\,dt=\inso \bD(\bv^\e) : \big((q^\e - 1)\bPS\big)\, dx\,dt.$$
		Let $\bPS\in L^2(S;H^2(\om;\bbS_0))$. By \eqref{embed_2},
		$\bPS\in L^2(S;L^\infty(\om))\subset L^2(\som)$. By \eqref{q_bdd} and
		Theorem~\ref{Compactness}, $|q^\e-q|\le 2c_3$ a.e., so
		$|(q^\e-q)\bPS|^2\le4c_3^2|\bPS|^2\in L^1(\som)$. By \eqref{eqstrong}, every
		subsequence of $(q^\e)$ has a further subsequence converging to $q$ a.e. in
		$\som$. Dominated convergence along it, together with the uniqueness of the
		limit, gives
		\[
		(q^\e-1)\bPS\sc(q-1)\bPS\qquad\text{strongly in }L^2(\som).
		\]
		Since $\bD(\bv^\e)\wc\bD(\bv)$ weakly in $L^2(\som)^{3\X3}$, \eqref{eqDv_lim} follows.

		\textbf{Step 2.} We introduce the defect term.
		
		We define the Jaumann commutator
		$$\bJc^\e := \bF^\e \bW(\bv^\e) - \bW(\bv^\e)\bF^\e.$$
		Since $\bF^\e$ is symmetric and $\bW(\bv^\e)$ is skew-symmetric, it follows that $\bJc^\e(x,t)\in\bbS_0$ for almost every $(t,x)\in S\times\Omega$. Next, H\"older's inequality and \eqref{U_F_bdd} yield
		$$\|\bJc^\e\|_{L^2(S;L^1(\Omega))} \leq C\|\bF^\e\|_{L^\infty(S;L^2(\Omega))} \|\nabla \bv^\e\|_{L^2(S;L^2(\Omega))} \leq C,$$
		where the constant $C>0$ is independent of $\e$.
		Therefore, using the continuous embedding \eqref{embed_2},
		$$\left|\inso \bJc^\e:\bPS \right|\leq C\|\bJc^\e\|_{L^2(S;L^1(\Omega))} \|\bPS\|_{L^2(S;L^\infty(\Omega))} \leq C\|\bPS\|_{L^2(S;H^2(\Omega))}.$$
		
		Thus, $\bJc^\e$ defines a bounded linear functional on
		$L^2(S;H^2(\Omega;\bbS_0))$, uniformly in $\e$. Equivalently, $\bJc^\e$ is bounded in $L^2\bigl(S;(H^2(\Omega;\bbS_0))^\ast\bigr).$ 
		Since $H^2(\Omega;\bbS_0)$ is a Hilbert space,
		its dual is also Hilbert. Therefore,
		$L^2\bigl(S;(H^2(\Omega;\bbS_0))^\ast\bigr)$ is reflexive.
		Hence, up to a subsequence, there exists
		\begin{equation}\label{J_weak}
			\bJc \in L^2\bigl(S;(H^2(\Omega;\bbS_0))^\ast\bigr)\quad\text{such that}\quad
			\bJc^\e \rightharpoonup \bJc
			\qquad\text{weakly in }L^2\bigl(S;(H^2(\Omega;\bbS_0))^\ast\bigr).
		\end{equation}
		Let us define
		$$\bDc:=\bJc-(\bF\bWv-\bWv\bF).$$
		Using the continuous embedding $H^2(\Omega;\bbS_0)\hookrightarrow L^\infty(\Omega;\bbS_0),$ we obtain the induced continuous embedding
		\[
		L^1(\Omega;\bbS_0)\hookrightarrow (H^2(\Omega;\bbS_0))^*.
		\]
		Therefore, using the regularity \eqref{lim_space} of $\bv$ and $\bF$, we deduce
		$$\bF\bWv-\bWv\bF\in L^2\bigl(S;(H^2(\Omega;\bbS_0))^\ast\bigr),$$
		and hence, \eqref{J_weak} yields
		$$\bDc\in L^2\bigl(S;(H^2(\Omega;\bbS_0))^\ast\bigr).$$
		Consequently,
		$$\inso \bJc^\e:\bPS \sc \ins \langle \bJc,\bPS \rangle_{(H^2)^\ast,H^2}= \int_{S\X\O} (\bF\bWv-\bWv\bF):\bPS+\ins \langle \bDc,\bPS \rangle_{(H^2)^\ast,H^2},$$
		for every $\bPS \in L^2(S; H^2( \Omega;\bbS_0))$.

		Finally, all the remaining terms in the weak formulation of the $\bF^\e$-equation pass to the limit exactly as in Theorem~\ref{Compactness}. 
		The only term for which strong
		compactness is not available is the Jaumann commutator, and the preceding argument shows that it converges to the defect functional $\bDc$ in the sense above. 
	\end{proof}
	
	\begin{remark}[Structure of the defect-limit system]
		\label{rem:strongform}
		Theorem~\ref{thm:defect-limit} produces a quadruplet $(\bv,q,\bF,\bDc)$
		satisfying \eqref{vlim}, \eqref{qlim} and the deviatoric equation with defect
		\begin{equation}
			\label{eq:strong-F-defect}
			\partial_{t}\bF
			+ (\bv\cdot\nabla)\bF
			+ \bF\bW(\bv)
			- \bW(\bv)\bF
			+ (q-1)\bD(\bv)
			+ \alpha\bF
			+ \bDc = 0,
		\end{equation}
		in the distributional sense specified in \eqref{def_lim}.
		
		The system \eqref{vlim}--\eqref{def_lim} has three equations for the four
		objects $(\bv, q, \bF, \bDc)$, but $\bDc$ is not a free unknown: it is fixed
		once the approximating sequence $(\bv^\e, q^\e, \bF^\e)$ and the subsequence
		are chosen. The system is therefore not self-contained: $\bDc$ cannot be determined from $(\bv, q, \bF)$ alone
		and carries information about the approximating sequence that survives in the
		limit only through this remainder. 
		In terms of Remark~\ref{rem:Geps}, the strictly local remodeling law is the
		closed limit of the weakly nonlocal laws as $\ell\to0$, along the chosen
		subsequence, exactly when $\bDc=0$.
		
		$\bDc$ records only the failure of the Jaumann commutator
		$\bF^\e\bW(\bv^\e)-\bW(\bv^\e)\bF^\e$ to pass to the limit under weak
		convergence of its two factors; the bound
		$\bDc\in L^2(S;(H^2(\om;\bbS_0))^\ast)$ comes from the $L^2(S;L^1(\om))$
		bound on the commutator and registers only its size. Whether the mechanism is
		oscillation or concentration is not decided at this level; see
		Section~\ref{sec-conclusion}.
	\end{remark}

	\begin{proposition}
		\label{cor:mu_zero_implies_m_zero}
		Assume the hypotheses of Theorem~\ref{thm:defect-limit}.
		If, in addition, either
		\begin{enumerate}
			\item $\boldsymbol{F}^{\varepsilon} \to \boldsymbol{F}$
			strongly in $L^{\infty}(0,T;L^{2}(\Omega;\bbS_0))$, or
			\item $\nabla\boldsymbol{v}^{\varepsilon} \to \nabla\boldsymbol{v}$
			strongly in $L^2(\som;\R^{3\X3})$,
		\end{enumerate}
		then the Jaumann defect
		\[
		\bDc=0
		\qquad\text{in }L^2\bigl(0,T;(H^2(\Omega;\bbS_0))^*\bigr).
		\]
	\end{proposition}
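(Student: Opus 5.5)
The plan is to show, under either hypothesis, that $\bJc^\e:=\bF^\e\bW(\bv^\e)-\bW(\bv^\e)\bF^\e$ converges to $\bF\bWv-\bWv\bF$ when tested against every $\bPS\in L^2(S;H^2(\om;\bbS_0))$. Since the weak limit $\bJc$ in \eqref{J_weak} is unique along the subsequence, this forces $\bJc=\bF\bWv-\bWv\bF$, and hence $\bDc=\bJc-(\bF\bWv-\bWv\bF)=0$ in $L^2(S;(H^2(\om;\bbS_0))^\ast)$. Throughout I use $H^2(\om)\hookrightarrow L^\infty(\om)$ from \eqref{embed_2}, so that $\bPS\in L^2(S;L^\infty(\om))$. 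I also use the matrix identity $(\bA\bW-\bW\bA):\bPS=\bW:(\bA\bPS-\bPS\bA)$ (up to transposition conventions) to move factors between slots when convenient.

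I would write the difference as
\[
\bJc^\e-\bigl(\bF\bWv-\bWv\bF\bigr)
=\bigl[(\bF^\e-\bF)\bW(\bv^\e)-\bW(\bv^\e)(\bF^\e-\bF)\bigr]
+\bigl[\bF\bW(\bv^\e-\bv)-\bW(\bv^\e-\bv)\bF\bigr],
\]
and use this splitting under hypothesis 1. For the first bracket, H\"older's inequality gives
\[
\Bigl|\inso[\cdots]:\bPS\Bigr|\le 2\|\bF^\e-\bF\|_{L^\infty(S;L^2)}\,\|\nabla\bv^\e\|_{L^2(\som)}\,\|\bPS\|_{L^2(S;L^\infty)}\cdot T^{0},
\]
more precisely $\int_S\|\bF^\e-\bF\|_{L^2}\|\nabla\bv^\e\|_{L^2}\|\bPS\|_{L^\infty}\,dt\le \|\bF^\e-\bF\|_{L^\infty L^2}\|\nabla\bv^\e\|_{L^2L^2}\|\bPS\|_{L^2L^\infty}$. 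This tends to zero by hypothesis 1 and \eqref{U_v_bdd}. For the second bracket, I rewrite it as $\inso \bW(\bv^\e-\bv):\bm{M}$, where $\bm{M}$ is built from $\bF\bPS-\bPS\bF$. Since $\bF\in L^\infty(S;L^2)$ and $\bPS\in L^2(S;L^\infty)$, we have $\bm M\in L^2(\som)$. Weak convergence $\nabla\bv^\e\wc\nabla\bv$ in $L^2(\som)$ from \eqref{evweak} then sends this term to zero.

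Under hypothesis 2 the roles of the factors swap, so I split as
\[
\bJc^\e-\bigl(\bF\bWv-\bWv\bF\bigr)=\bigl[\bF^\e\bW(\bv^\e-\bv)-\bW(\bv^\e-\bv)\bF^\e\bigr]+\bigl[(\bF^\e-\bF)\bWv-\bWv(\bF^\e-\bF)\bigr].
\]
The first bracket is bounded by $2\|\bF^\e\|_{L^\infty L^2}\|\nabla(\bv^\e-\bv)\|_{L^2L^2}\|\bPS\|_{L^2L^\infty}$, which tends to zero by \eqref{U_F_bdd} and hypothesis 2. For the second bracket, I move the factors to obtain $\inso(\bF^\e-\bF):\bm N$ with $\bm N$ built from $\bWv\bPS-\bPS\bWv$.

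The main obstacle is this last term. Here $\bm N\in L^1(S;L^2(\om))$ only, because $\nabla\bv\in L^2(S;L^2)$ and $\bPS\in L^2(S;L^\infty)$, so $\bm N$ is not in $L^2(\som)$. It is, however, exactly in the predual $L^1(S;L^2)$ of $L^\infty(S;L^2)$. Therefore the weak-$\ast$ convergence $\bF^\e\overset{\ast}{\wc}\bF$ in $L^\infty(S;L^2(\om;\bbS_0))$ from \eqref{eFweak} suffices to send it to zero. One should check that $\bm N$ takes values in $\bbS_0$, or project onto $\bbS_0$, which does not change the pairing with the $\bbS_0$-valued difference $\bF^\e-\bF$.

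Combining the two cases, every test function sees $\bJc^\e\to\bF\bWv-\bWv\bF$. Comparing with \eqref{J_weak} gives $\bJc=\bF\bWv-\bWv\bF$, and hence $\bDc=0$.
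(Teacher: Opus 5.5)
Your proposal is correct and follows the paper's proof essentially step for step. It uses the same two splittings and the same H\"older bound with $\bPS\in L^2(S;L^\infty(\om))$ for the term carrying the strongly convergent factor. Under hypothesis 1 you use weak $L^2$ convergence of $\bW(\bv^\e)$ against $\bF\bPS-\bPS\bF\in L^2(\som)$, and under hypothesis 2 weak-$\ast$ convergence of $\bF^\e$ against $\bWv\bPS-\bPS\bWv\in L^1(S;L^2(\om;\bbS_0))$, exactly as the paper does. The check you flag is immediate, since the commutator of a skew-symmetric and a symmetric matrix is symmetric and traceless, so $\bm N$ is already $\bbS_0$-valued.
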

	\begin{proof}
		\textbf{Step 1:} First we assume $\bF^{\e} \to \bF$ strongly in $L^{\infty}(0,T;L^{2}(\Omega;\bbS_0))$. 
		
		Setting $\bW^\varepsilon := \bW(\bv^\e)$ and $\bW:=\bW(\bv)$, we write 
		\begin{align*}
			I= &\int_{S\times\Omega}
			(\bF^{\e} \bW^\varepsilon - \bW^\varepsilon \bF^{\e}):\bPS- (\bF\bW - \bW\bF):\bPS\\
			= &\underbrace{\int_{S\times\Omega}((\bF^{\e}-\bF)\bW^\varepsilon - \bW^\varepsilon(\bF^{\e}-\bF)):\bPS}_{I_1}+\underbrace{\int_{S\times\Omega}(\bF(\bW^\varepsilon-\bW) - (\bW^\varepsilon-\bW)\bF):\bPS}_{I_2}.
		\end{align*}
		For the first term, using \eqref{embed_2}, we obtain
		$$|I_1| \le
		2 \|\bF^{\e}-\bF\|_{L^\infty(S;L^2(\om))}
		\|\bW^\varepsilon\|_{L^2(S\times\Omega)} \|\bPS\|_{L^2(S;L^\infty(\om))} \sc 0.$$
		Next, using the matrix identity
		\[
		(\bF(\bW^\varepsilon-\bW) - (\bW^\varepsilon-\bW)\bF):\bPS
		=
		(\bW^\varepsilon-\bW):(\bF\bPS-\bPS \bF),
		\]
		together with $\bF\in L^\infty\big(S;L^2(\om;\bbS_0)\big)$ and
		$\bPS\in L^2\big(S;L^\infty(\om;\bbS_0)\big)$, we obtain
		$$\bF\bPS-\bPS \bF \in L^2(\som;\R^{3\X3}).$$
		Hence, the weak convergence $\bW^\varepsilon \rightharpoonup \bW\text{ weakly in }L^2(S\times\Omega;\mathbb R^{3\times 3}),$ gives $I_2\to0$. Therefore,
		\[
		\int_{S\times\Omega}
		\bigl(\bF^{\e} \bW(\bv^\e)-\bW(\bv^\e)\bF^{\e}\bigr):\bPS
		\sc
		\int_{S\times\Omega}
		\bigl(\bF\bW(\bv)-\bW(\bv)\bF\bigr):\bPS
		\qquad
		\text{for all }\bPS\in L^2(S;H^2(\O;\bbS_0)).
		\]
		Equivalently,
		\[
		\int_0^T \langle \bDc(t),\bPS(t)\rangle_{(H^2)^*,H^2}\,dt =0
		\qquad
		\text{for all }\bPS\in L^2(S;H^2(\O;\bbS_0)).
		\]
		Hence, we conclude that $\bDc=0\text{ in }L^2(0,T;(H^2(\Omega;\bbS_0))^*).$
		
		\textbf{Step 2.} Similarly, assume that $\nabla \bm{v}^\varepsilon\to \nabla \bm{v}$ $\text{strongly in } L^2(S\times\Omega;\mathbb R^{3\times3}).$
		It follows immediately that
		\[
		\bm{W}(\bm{v}^\varepsilon)\to \bm{W}(\bm{v})
		\qquad
		\text{strongly in }
		L^2(S\times\Omega;\mathbb R^{3\times3}),
		\]
		We now consider the difference
		$$I={
			\underbrace{ \int_{S\times\Omega}
				(\bm{F}^\varepsilon(\bm{W}(\bm{v}^\varepsilon)-\bm{W}(\bm{v}))-
				(\bm{W}(\bm{v}^\varepsilon)-\bm{W}(\bm{v}))\bm{F}^\varepsilon):\bm{\Psi}}_{I_1}
			+ \underbrace{\int_{S\times\Omega}
				((\bm{F}^\varepsilon-\bm{F})\bm{W}(\bm{v})-\bm{W}(\bm{v})(\bm{F}^\varepsilon-\bm{F})
				):\bm{\Psi}}_{I_2}.}$$
		Using H\"older's inequality, we deduce
		\begin{multline*}
			|I_1|\leq 2 \ins
			\|\bm{F}^\varepsilon(t)\|_{L^2(\Omega)} \|\bm{W}(\bm{v}^\varepsilon)-\bm{W}(\bm{v})\|_{L^2(\Omega)}\|\bm{\Psi}(t)\|_{L^\infty(\Omega)}\\
			\le 2
			\|\bm{F}^\varepsilon\|_{L^\infty(S;L^2(\Omega))}
			\|\bm{W}(\bm{v}^\varepsilon)-\bm{W}(\bm{v})\|_{L^2(S;L^2(\Omega))}
			\|\bm{\Psi}\|_{L^2(S;L^\infty(\Omega))} \sc 0.
		\end{multline*}
		Moreover, since $\bm{W}(\bm{v})\in L^2(S;L^2(\Omega;\R^{3\times 3}))\text{ and }
		\bm{\Psi}\in L^2(S;L^\infty(\Omega;\bbS_0))$, we obtain 
		$$(\bm{W}(\bm{v})\bm{\Psi}
		-\bm{\Psi}\bm{W}(\bm{v}))
		\in
		L^1(S;L^2(\Omega;\bbS_0)).$$ 
		Hence, arguing as in Step 1 and using the weak-$\ast$ convergence \eqref{eFweak}, we deduce that $I_2\to0$. Consequently,
		\[
		\int_{S\times\Omega}(\bF^{\e} \bW(\bv^\e)-\bW(\bv^\e)\bF^{\e}):\bPS\sc\int_{S\times\Omega}(\bF\bW(\bv)-\bW(\bv)\bF):\bPS,\qquad\text{for all }\bPS\in L^2(S;H^2(\O;\bbS_0)).
		\]
		Therefore, we conclude that $\bDc=0\text{ in }L^2(0,T;(H^2(\Omega;\bbS_0))^*)$, which completes the proof.
	\end{proof}
	
	\section{Conclusion and outlook}\label{sec-conclusion}
	
	The limit system of Theorem~\ref{thm:defect-limit} is closed except for the
	defect $\bDc$. We discuss three routes to remove it: additional regularity,
	an energy-level cancellation, or a broader solution concept. None is
	available at the regularity level of Theorem~\ref{thm:defect-limit}.
	
	\textbf{Removing the defect by additional regularity.}
	By Proposition \ref{cor:mu_zero_implies_m_zero} it would be enough to obtain
	strong convergence of one of the two factors of the Jaumann commutator. One
	may hope that the deviatoric equation propagates spatial regularity of
	$\bF^\e$ provided the velocity gradient is controlled in a strong enough norm,
	with the diffusive term entering a differentiated estimate with a favorable sign.
	This suggests a conditional criterion of Beale--Kato--Majda type,
	\begin{equation}\label{eq:criterion}
		\nabla\bv^\e\in L^1(0,T;L^\infty(\om))
		\quad\text{uniformly in }\e ,
	\end{equation}
	together with suitable regularity of the initial data.
	
	Turning this into an argument would require more than \eqref{eq:criterion}
	alone. The differentiated equation retains contributions from the stretching
	term and from the Jaumann commutator for which Lemma \ref{e_Ues} provides no
	$\e$-independent control; the associated integrations by parts are not
	justified on a general bounded Lipschitz domain under the Neumann condition
	\eqref{Rmodel+}$_6$; and Proposition \ref{cor:mu_zero_implies_m_zero} asks for
	convergence of $\bF^\e$ in $L^\infty(0,T;L^2(\om;\bbS_0))$, so the compactness
	argument would have to be arranged accordingly.
	
	The criterion \eqref{eq:criterion} is in any case not a consequence of the
	estimates of Lemma \ref{e_Ues}. In three dimensions it is not known to hold
	for Leray--Hopf solutions of the momentum equation, and establishing it for
	\eqref{Rmodel+} would mean developing a strong-solution theory, with bounds
	uniform in $\e$, for the coupled system rather than for the momentum equation
	alone, since the latter is forced by $\lambda\nabla\!\cdot\!\bF^\e$. Whether
	the small-data and short-time theories available for Oldroyd-type systems
	\cite{guillopeSaut1990,cheminMasmoudi2001} can be adapted in this way is not
	clear to us, and we do not pursue the two-dimensional case here either.
	
	\textbf{Why the energy cancellation does not seem to close.}
	A second possible route is an energy-level cancellation analogous to
	that of Lions and Masmoudi for the corotational Oldroyd model
	\cite{LionsMasmoudi2000}. There, the momentum and stress equations
	produce the same cross-defect
	\[
	\Theta :=
	\lim_{\e\to0}
	\left(
	\int_{S\times\Omega}\bF^\e:\bm D(\bv^\e)
	-
	\int_{S\times\Omega}\bF:\bm D(\bv)
	\right),
	\]
	with opposite weights, allowing cancellation. In the present system,
	however, the momentum equation
	\eqref{Rmodel+}$_1$ couples through $\lambda\bF^\e$ with a constant
	coefficient, whereas the deviatoric equation \eqref{Rmodel+}$_3$ couples through
	$(q^\e-1)\bm D(\bv^\e)$. The corresponding cross-defect is therefore
	\[
	\Theta_q :=
	\lim_{\e\to0}
	\left(
	\int_{S\times\Omega}
	(q^\e-1)\bm D(\bv^\e):\bF^\e
	-
	\int_{S\times\Omega}
	(q-1)\bm D(\bv):\bF
	\right),
	\]
	which is distinct from $\Theta$ and cannot be canceled by the same
	energy combination.
	
	Although $q^\e\to q$ strongly in every
	$L^p(S\times\Omega)$, $p<\infty$, the remainder
	\[
	\int_{S\times\Omega}
	(q^\e-q)\bm D(\bv^\e):\bF^\e
	\]
	need not vanish because $\bm D(\bv^\e):\bF^\e$ is controlled only
	in $L^1(S\times\Omega)$. Even if this remainder vanished, the remaining
	$q$-weighted cross-defect would not be controlled by $\Theta$ for
	nonconstant $q$, and its treatment would require sufficient
	regularity to pair the defect with the limiting deviatoric field. The obstruction therefore comes from the
	stretching term $(\operatorname{tr}\bE-1)\bm D(\bv)$ in the
	morphoelastic strain law~\eqref{eq:intro-strain}, whose coefficient depends on the volumetric
	remodeling state and has no counterpart in the viscoelastic fluid
	models for which the cancellation argument was developed.
	
	\textbf{Widening the solution concept.}
	The third route abandons the distributional formulation. As recalled in
	Section~\ref{sec:intro}, Eiter, Hopf, and Lasarzik \cite{eiterHopfLasarzik2023}
	meet the failure of weak sequential compactness of the Jaumann commutator in
	the same form as here, and replace the weak formulation by a relative-energy
	inequality, obtaining energy-variational solutions in the spirit of the
	dissipative solutions of Lions \cite{lions1996}.
	
	That formulation is closed: the limit objects satisfy a self-contained
	solution concept with weak-strong uniqueness, and no undetermined remainder
	survives. The price is that the stress evolution is no longer satisfied in the
	sense of distributions, so that the relation between the limit stress and the
	original model is mediated by the relative energy rather than by the equation
	itself. The defect formulation makes the opposite choice: it keeps the
	distributional equation, at the price of a term that the analysis does not
	determine, and isolates that term rather than absorbing it. Which is
	preferable depends on what one wants from the limit. Applying the relative
	energy method to \eqref{Dmodel} would require adapting it to the coupling
	$(q-1)\bDv$, whose variable coefficient is absent from the models treated in
	\cite{eiterHopfMielke2022,eiterHopfLasarzik2023}. Whether the pointwise bounds
	on $q$ from Lemma \ref{lem:qbdd} are of use here we leave open.
	
	\textbf{Outlook.}
	Two questions remain open within the present framework. The first is whether
	$\bDc$ can be nonzero. The limit system is not closed, but we exhibit no
	sequence along which the defect fails to vanish. For the corotational Oldroyd
	system, Lions and Masmoudi \cite[Sec.~4.2]{LionsMasmoudi2000} show by an
	explicit oscillating construction that the Jaumann commutator need not be
	weakly continuous, and one may ask whether an analogous construction is
	possible here. The situation differs, since the coefficient of the stretching
	term is itself an unknown and $q^\e$ converges strongly, so any oscillation
	would have to be carried by $\bF^\e$ and $\nabla\bv^\e$ alone while remaining
	compatible with the transport of the trace. Constructing such a sequence, or
	excluding it by an argument outside the energy estimates, would settle the
	status of the limit system. The second question is whether the defect admits a
	finer representation. The bound
	$\bDc\in L^2\bigl(0,T;(H^2(\om;\bbS_0))^*\bigr)$ obtained here comes from the
	$L^2(S;L^1(\om))$ bound on the commutator and records only its size; a
	Young-measure description would separate oscillation from concentration in the
	sense of \cite{diPernaMajda1987}, while the $H$-measures of Tartar and
	G\'erard \cite{tartar1990,gerard1991} would in addition resolve the directions
	in which $\bF^\e$ and $\bW(\bv^\e)$ oscillate against one another. Since the
	defect arises from a product of two weakly convergent sequences with no shared
	compactness, a microlocal description of that interaction is the natural
	setting for the first question as well.
	
	\section*{Acknowledgement}
	Swarupananda Banerjee acknowledges IIT Kharagpur for funding his PhD position. This work was largely completed when Amartya Chakrabortty was a PhD student and research assistant in the Department of Processes and Material Simulation, Fraunhofer ITWM, Kaiserslautern, Germany.
	
	\section*{Declarations}
	\textbf{Competing interests.} The authors declare no competing interests.\\
	\textbf{Data availability.} Data sharing is not applicable to this article, as no datasets were generated or analyzed.

	\appendix

	
	\section{Auxiliary results}
	\subsection{Maximum principle for the scalar equation}
	\begin{lemma}
		\label{lem:qbdd}
		Let $(\bv^\e,q^\e)$ be a weak solution of the truncated system\footnote{That is, Definition~\ref{Def_Weak} with $q^\e$ replaced by $\mathscr T(q^\e)$ in the transport and coupling terms; equivalently, the limit of \eqref{dgweak1}--\eqref{dgweak3} obtained in Step 8 of Theorem~\ref{Thm_existence}.}, where $\alpha>0$ and $\beta \geq 0$.
		If the initial datum satisfies
		\begin{equation}\label{AssR01}
			0 < c_0 \le q_0(x) \le c_1, \quad \text{in } \om,
		\end{equation}
		then
		\begin{equation}\label{q_bdd}
			0<c_2\leq q^\e(x,t)\leq c_3,\quad\text{in } \som ,
		\end{equation}
		with constants $c_2,c_3$ independent of $\e$.
	\end{lemma}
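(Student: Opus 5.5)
We prove the two bounds separately by a Stampacchia truncation argument applied to the truncated scalar equation
\[
\ins\langle\p_t q^\e,\phi\rangle+\e\inso\nabla q^\e\cdot\nabla\phi-\inso\mathscr T(q^\e)\bv^\e\cdot\nabla\phi+\alpha\inso q^\e\phi=3\beta\inso\phi .
\]
It is a linear parabolic equation for $q^\e$ with Neumann data, and $q^\e\in L^2(S;H^1(\O))$, $\p_tq^\e\in L^2(S;(H^1(\O))^\ast)$. Hence $\phi=(q^\e-k)_+$ and $\phi=(k-q^\e)_+$ are admissible test functions for constants $k$. The Lions--Magenes lemma gives
\[
\langle\p_tq^\e,(q^\e-k)_+\rangle=\tfrac12\ddt\|(q^\e-k)_+\|^2_{L^2(\O)}
\]
(Stampacchia's chain rule in the evolution triple), and the analogue holds for the lower truncation.

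\textbf{Upper bound.} We take $k=c_3$ and the test function $\phi=(q^\e-c_3)_+$. The diffusion term gives $\e\|\nabla(q^\e-c_3)_+\|^2\ge0$. The transport term needs care because of the truncation. On the support of $\nabla\phi$ we have $q^\e>c_3$, hence $\mathscr T(q^\e)=c_3$ there. Therefore
\[
\ino\mathscr T(q^\e)\bv^\e\cdot\nabla\phi=c_3\ino\bv^\e\cdot\nabla\phi=0
\]
by $\dv\bv^\e=0$ and $\bv^\e|_{\p\O}=0$. This is the key point where the truncation helps rather than hurts: no $L^\infty$ bound on $\bv^\e$ is needed. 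The remaining terms combine to
\[
\ino(\alpha q^\e-3\beta)(q^\e-c_3)_+=\alpha\|(q^\e-c_3)_+\|^2+\ino(\alpha c_3-3\beta)(q^\e-c_3)_+\ge0,
\]
since $c_3\ge3\beta/\alpha$. Thus $\ddt\|(q^\e-c_3)_+\|^2\le0$. Because $q_0\le c_1\le c_3$, the initial value is zero, so $q^\e\le c_3$ a.e.

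\textbf{Lower bound, case $\beta>0$.} We test with $\phi=-(c_2-q^\e)_+$. The transport term vanishes as before, since $\mathscr T(q^\e)=c_2$ where $q^\e<c_2$. The reaction term contributes $\alpha\|(c_2-q^\e)_+\|^2+\ino(3\beta-\alpha c_2)(c_2-q^\e)_+\ge0$, because $c_2\le3\beta/\alpha$. Together with $q_0\ge c_0\ge c_2$ this yields $q^\e\ge c_2>0$.

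\textbf{Lower bound, case $\beta=0$.} Here the constant $c_0e^{-\alpha T}$ is not a stationary subsolution, so we use the time-dependent barrier $k(t)=c_0e^{-\alpha t}$, which solves $k'+\alpha k=0$. We test with $(k(t)-q^\e)_+$. Now $w:=k-q^\e$ satisfies the equation with $\p_tk+\alpha k=0$, and the transport term still vanishes. Indeed, where $q^\e<k(t)$ we have $q^\e<c_0$, but $\mathscr T(q^\e)$ equals $q^\e$ or $c_2$ there, not a constant, so the vanishing needs an extra step. We write $\mathscr T(q^\e)\nabla(k-q^\e)_+=-\nabla G(q^\e)$ with $G$ a primitive of $\mathscr T\cdot\mathbf 1_{\{s<k\}}$, which is Lipschitz since $\mathscr T$ is bounded. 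The term then integrates to zero against the divergence-free $\bv^\e$, exactly as in Step~3 of Theorem~\ref{Thm_existence}. The same primitive trick also covers the constant cases above, so we will use it uniformly. We obtain $\ddt\|(k-q^\e)_+\|^2\le0$, hence $q^\e\ge c_0e^{-\alpha t}\ge c_2$.

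\textbf{Main obstacle.} We expect the main difficulty to be the rigorous justification of the time-derivative identity for the truncated quantities. This covers the $t$-dependent level in the case $\beta=0$, where we would use a Steklov average or regularize $(\cdot)_+$ by a smooth convex function and then pass to the limit. The constants depend only on $c_0,c_1,\alpha,\beta,T$, and hence are independent of $\e$.
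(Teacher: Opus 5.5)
Your proof is correct and is essentially the paper's argument: you compare with ODE barriers by testing with positive parts, and you remove the transport term by writing $\mathscr T(q^\e)\nabla(b-q^\e)^+=-\nabla\Phi(\min\{q^\e,b\})$ with $\Phi'=\mathscr T$ and using $\dv\bv^\e=0$, $\bv^\e=0$ on $\p\O$. The only difference is that the paper uses the time-dependent barriers $\underline q(t),\overline q(t)$ solving $y'+\alpha y=3\beta$ for both bounds, which gives the sharper estimate $\underline q(t)\le q^\e\le\overline q(t)$; you instead use the constant levels $c_3$ and, for $\beta>0$, $c_2$, which are super-/subsolutions equal to the truncation thresholds so that the transport term vanishes trivially, and you need a time-dependent barrier and the primitive trick only for the lower bound when $\beta=0$.
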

	
	\begin{proof}
		Since $(\bv^\e,q^\e)$ is a weak solution of the truncated problem, it satisfies
		\begin{equation}\label{truncated_weak_q}
			\left\langle \partial_t q^\e,\phi\right\rangle
			+\e\int_\Omega\nabla q^\e\cdot\nabla\phi
			-\int_\Omega
			\mathscr{T}(q^\e)\,
			\bv^\e\cdot\nabla\phi
			+\alpha\int_\Omega q^\e\phi=3\beta\int_\Omega\phi,
		\end{equation}
		for every test function $\phi\in H^1(\om)$. Moreover, $q^\e\in L^2(S;H^1(\om))$ and $\partial_tq^\e\in L^2(S;(H^1(\om))^\ast)$.
		
		Define the spatially homogeneous barriers
		\begin{equation}\label{barriers}
			\underline{q}(t):=\frac{3\beta}{\alpha}+\Big(c_0-\frac{3\beta}{\alpha}\Big)e^{-\alpha t},
			\qquad
			\overline{q}(t):=\frac{3\beta}{\alpha}+\Big(c_1-\frac{3\beta}{\alpha}\Big)e^{-\alpha t}.
		\end{equation}
		Both solve the relaxation equation $y'+\alpha y=3\beta$ and are independent of $x$. Let $b$ denote either of them. Then $\nabla b=0$, so the diffusion term drops, and $\mathscr T(b(t))$ is constant in space, so that
		\[
		\int_\Omega \mathscr T(b)\,\bv^\e\cdot\nabla\phi
		=\mathscr T(b)\int_\Omega \bv^\e\cdot\nabla\phi=0
		\qquad\text{for all }\phi\in H^1(\om),
		\]
		by the incompressibility condition together with $\bv^\e=0$ on $\partial\om$. Hence $b$ satisfies \eqref{truncated_weak_q} for every $\phi\in H^1(\om)$.
		
		Let $\Phi(s):=\int_0^s\mathscr T(\tau)\,d\tau$ be the primitive introduced in Step 3 of Theorem~\ref{Thm_existence}, so that $\Phi\in C^{1,1}(\mathbb R)$ with $\Phi'=\mathscr T$. Fix $t\in S$ and let $b=b(t)$ be spatially constant. Since $\min\{q^\e(t),b\}\in H^1(\om)$ and $\Phi$ is Lipschitz, the chain rule for Sobolev functions gives
		\[
		\nabla\,\Phi\big(\min\{q^\e,b\}\big)
		=\mathscr T(q^\e)\,\chi_{\{q^\e<b\}}\nabla q^\e
		\qquad\text{a.e. in }\om .
		\]
		Therefore, setting $z:=b-q^\e$ and $z^+:=\max\{z,0\}$, we have $\nabla z^+=-\chi_{\{q^\e<b\}}\nabla q^\e$, and
		\begin{equation}\label{transport_vanishes}
			\int_\Omega \mathscr T(q^\e)\,\bv^\e\cdot\nabla z^+
			=-\int_\Omega \bv^\e\cdot\nabla\,\Phi\big(\min\{q^\e,b\}\big)
			=\int_\Omega(\nabla\cdot\bv^\e)\,\Phi\big(\min\{q^\e,b\}\big)=0,
		\end{equation}
		the boundary term vanishing because $\bv^\e=0$ on $\partial\om$. The same computation with $\max\{q^\e,b\}$ in place of $\min\{q^\e,b\}$ gives
		\begin{equation}\label{transport_vanishes_upper}
			\int_\Omega \mathscr T(q^\e)\,\bv^\e\cdot\nabla w^+=0,
			\qquad w^+:=(q^\e-b)^+ .
		\end{equation}
		Take $b=\underline q$ and $z:=\underline q-q^\e$. The map $r\mapsto r^+$ is Lipschitz continuous, so $z^+\in L^2(S;H^1(\om))$ is an admissible test function, and since $\partial_t z\in L^2(S;(H^1(\om))^\ast)$,
		\[
		\left\langle \partial_t z,z^+\right\rangle_{(H^1(\om))^\ast,H^1(\om)}
		=\frac12\frac{d}{dt}\|z^+\|_{L^2(\om)}^2 .
		\]
		As shown above, the barrier $\underline q$ satisfies \eqref{truncated_weak_q}. Subtracting \eqref{truncated_weak_q} for $q^\e$ from \eqref{truncated_weak_q} for $\underline q$, testing by $z^+$, and using $\nabla\underline q=0$ together with the vanishing of the $\mathscr T(\underline q)$ contribution, we obtain
		\[
		\frac12\frac{d}{dt}\|z^+\|_{L^2(\om)}^2
		+\e\|\nabla z^+\|_{L^2(\om)}^2
		+\alpha\|z^+\|_{L^2(\om)}^2
		+\int_\Omega \mathscr T(q^\e)\,\bv^\e\cdot\nabla z^+=0 .
		\]
		The last term vanishes by \eqref{transport_vanishes}, so that
		\[
		\frac12\frac{d}{dt}\|z^+\|_{L^2(\om)}^2
		+\e\|\nabla z^+\|_{L^2(\om)}^2
		+\alpha\|z^+\|_{L^2(\om)}^2=0 .
		\]
		Since $q_0\ge c_0=\underline q(0)$, we have $z^+(\cdot,0)=(c_0-q_0)^+=0$, hence $z^+=0$ a.e. in $\som$, that is, $q^\e\ge\underline q$.
		
		Take $b=\overline q$ and $w:=q^\e-\overline q$. Repeating the argument with $w^+$ in place of $z^+$ and using \eqref{transport_vanishes_upper}, we obtain
		\[
		\frac12\frac{d}{dt}\|w^+\|_{L^2(\om)}^2
		+\e\|\nabla w^+\|_{L^2(\om)}^2
		+\alpha\|w^+\|_{L^2(\om)}^2=0 .
		\]
		Since $q_0\le c_1=\overline q(0)$, we have $w^+(\cdot,0)=0$, hence $w^+=0$ a.e. in $\som$ and $q^\e\le\overline q$. The two bounds together give
		\begin{equation}\label{q_bdd_time}
			\underline{q}(t)\le q^\e(x,t)\le \overline{q}(t),
			\qquad\text{a.e. in } \som .
		\end{equation}
		Both $\underline q$ and $\overline q$ are monotone in $t$ and take values between their endpoints. For $\beta>0$,
		\[
		\underline q(t)\ge\min\Big\{c_0,\frac{3\beta}{\alpha}\Big\},
		\qquad
		\overline q(t)\le\max\Big\{c_1,\frac{3\beta}{\alpha}\Big\},
		\qquad t\in[0,T],
		\]
		whereas for $\beta=0$ we have $\underline q(t)=c_0e^{-\alpha t}\ge c_0e^{-\alpha T}>0$ and $\overline q(t)=c_1e^{-\alpha t}\le c_1$. In either case \eqref{q_bdd} follows from \eqref{q_bdd_time} with the
		constants $c_2,c_3$ of Step~1.1 of Theorem~\ref{Thm_existence}, which depend
		only on $c_0,c_1,\alpha,\beta,T$ and not on $\e$.
	\end{proof}

	\subsection{DiPerna--Lions commutator lemma}
	We recall \cite[Lemma~II.1]{DiPernaLions1989}.
	\begin{lemma}[Lemma II.1, \cite{DiPernaLions1989}]
		\label{ap_Diperna}
		Let $p'$ denote the conjugate exponent of $p$, i.e. $\frac{1}{p}+\frac1{p'}=1$, and let
		$s$ satisfy
		$$\text{for } r \text{ or } p<\infty,\quad \frac{1}{s}=\frac{1}{r}+\frac{1}{p},
		\qquad\text{for }r=p=\infty,\quad s<\infty \text{ arbitrary}.$$
		\begin{enumerate}
			\item Let $\bm B\in (W^{1,r}_{\mathrm{loc}}(\mathbb{R}^N))^N$,
			$w\in L^p_{\mathrm{loc}}(\mathbb{R}^N)$ with $1\le p\le\infty$ and $r\ge p'$. Then
			\[
			(\bm B\cdot\nabla w)*\eta_\delta-\bm B\cdot\nabla(w*\eta_\delta)
			\xrightarrow{\delta\to0}0\quad\text{in }L^s_{\mathrm{loc}}(\mathbb{R}^N).
			\]
			\item Let $\bm B\in L^1(0,T;(W^{1,r}_{\mathrm{loc}}(\mathbb{R}^N))^N)$,
			$w\in L^\infty(0,T;L^p_{\mathrm{loc}}(\mathbb{R}^N))$. Then
			\begin{equation}\label{ap_Diperna_2}
				(\bm B\cdot\nabla w)*\eta_\delta-\bm B\cdot\nabla(w*\eta_\delta)
				\xrightarrow{\delta\to0}0\quad\text{in }L^1(0,T;L^s_{\mathrm{loc}}(\mathbb{R}^N)).
			\end{equation}
		\end{enumerate}
	\end{lemma}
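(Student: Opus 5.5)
The proof I would give is the classical one of DiPerna and Lions: a uniform bound on the commutator, convergence for smooth data, and a density argument. Since the statement is local, I fix compact sets $K\Subset K'\subset\R^N$ with $K+B_1\subset K'$ and take $\delta<1$. I read $\bm B\cdot\nabla w$ in the distributional sense $\bm B\cdot\nabla w:=\operatorname{div}(\bm B w)-(\operatorname{div}\bm B)\,w$, which is meaningful because $\bm B w\in L^1_{\mathrm{loc}}$ and $(\operatorname{div}\bm B)w\in L^1_{\mathrm{loc}}$ (using $r\ge p'$). Expanding the convolution against $\eta_\delta$ gives the explicit representation
\[
C_\delta(w)(x):=(\bm B\cdot\nabla w)*\eta_\delta-\bm B\cdot\nabla(w*\eta_\delta)
=\int_{\R^N} w(y)\,\bigl(\bm B(y)-\bm B(x)\bigr)\cdot\nabla\eta_\delta(x-y)\,dy-\bigl((\operatorname{div}\bm B)\,w\bigr)*\eta_\delta(x).
\]
This formula involves no derivative of $w$, and that is what drives the whole argument.

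\textbf{Step 1 (uniform bound).} I would show
\[
\|C_\delta(w)\|_{L^s(K)}\le C\,\|\nabla\bm B\|_{L^r(K')}\,\|w\|_{L^p(K')},
\qquad C \text{ independent of }\delta.
\]
The second term of $C_\delta(w)$ is handled by Young's convolution inequality together with Hölder's inequality, since $\tfrac1s=\tfrac1r+\tfrac1p$. For the first term I substitute $y=x-\delta z$ with $|z|\le1$. Then $\nabla\eta_\delta(x-y)\,dy=\delta^{-1}\nabla\eta(z)\,dz$, and
\[
|\bm B(x-\delta z)-\bm B(x)|\le\delta\int_0^1|\nabla\bm B(x-t\delta z)|\,dt .
\]
The factor $\delta$ cancels, so the first term is bounded by
\[
\int_{|z|\le1}\int_0^1|\nabla\eta(z)|\,|w(x-\delta z)|\,|\nabla\bm B(x-t\delta z)|\,dt\,dz .
\]
Minkowski's integral inequality and Hölder's inequality in $x$, using translation invariance of the $L^r$ and $L^p$ norms on $K'$, give the claimed bound.

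\textbf{Step 2 (smooth $w$).} For $w\in C^\infty$, both $(\bm B\cdot\nabla w)*\eta_\delta$ and $\bm B\cdot\nabla(w*\eta_\delta)$ converge to $\bm B\cdot\nabla w$ in $L^s(K)$. Indeed, $\nabla(w*\eta_\delta)\to\nabla w$ uniformly on $K$, and $\bm B\in L^r_{\mathrm{loc}}\subset L^s_{\mathrm{loc}}$. Hence $C_\delta(w)\to0$ in $L^s(K)$.

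\textbf{Step 3 (density).} For $p<\infty$ I approximate $w$ in $L^p(K')$ by smooth $w_n$ and write $C_\delta(w)=C_\delta(w-w_n)+C_\delta(w_n)$. Step 1 makes the first piece small uniformly in $\delta$, and Step 2 makes the second vanish as $\delta\to0$.

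The main obstacle is the endpoint $p=\infty$, where smooth functions are not dense in $L^\infty$. If $r<\infty$, then $s=r$. I would use $w\in L^q(K')$ for a large finite $q$, with density in $L^q$, and apply Step 1 with the pair $(r_1,q)$ where $r_1<r$ and $\tfrac1{r_1}+\tfrac1q=\tfrac1s$. Alternatively, I would use that $w_n\to w$ a.e. with a uniform $L^\infty$ bound, so that dominated convergence handles the first term of the representation. If $r=p=\infty$, any $s<\infty$ is allowed, and I reduce to the finite case by viewing $\nabla\bm B\in L^{r'}_{\mathrm{loc}}$ and $w\in L^{q}_{\mathrm{loc}}$ with $\tfrac1{r'}+\tfrac1q=\tfrac1s$.

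\textbf{Part 2 (time-dependent case).} I apply Steps 1--3 for a.e. fixed $t$. This gives $\|C_\delta(w)(t)\|_{L^s(K)}\to0$ pointwise in $t$. It also gives the domination
\[
\|C_\delta(w)(t)\|_{L^s(K)}\le C\,\|\nabla\bm B(t)\|_{L^r(K')}\,\|w\|_{L^\infty(0,T;L^p(K'))}\in L^1(0,T).
\]
Dominated convergence in $t$ then yields \eqref{ap_Diperna_2}.
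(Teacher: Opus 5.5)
The paper does not prove this lemma; it quotes it from DiPerna--Lions. Your outline (a commutator bound depending only on $\|\nabla\bm B\|_{L^r(K')}\|w\|_{L^p(K')}$, convergence for smooth data, then density) is the classical argument. It is correct in three places: for $p<\infty$, for $r=p=\infty$ (your reduction to a finite pair is legitimate, since $\frac1{r'}+\frac1q=\frac1s\le1$ is exactly the compatibility condition, for $s\ge1$), and for Part~2.

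The gap is the endpoint $p=\infty$, $r<\infty$, which is precisely the case the paper uses in Theorem~\ref{ap_rdelta} ($r=2$, $p=\infty$, $s=2$). Neither of your two suggestions works as written.

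\emph{First suggestion.} It is internally inconsistent. Here $s=r$, so $\frac1{r_1}+\frac1q=\frac1s$ with $q<\infty$ forces $r_1>r$, and $\nabla\bm B$ need not lie in $L^{r_1}_{\mathrm{loc}}$. With $r_1<r$ you only get convergence in $L^{s_1}_{\mathrm{loc}}$ for some $s_1<r$, so you lose the sharp exponent. The uniform $L^r$ bound does not by itself upgrade this to $L^r$ convergence.

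\emph{Second suggestion.} It takes the limits in the wrong order. For each fixed $\delta$, dominated convergence gives $\|C_\delta(w-w_n)\|_{L^r(K)}\to0$ as $n\to\infty$. The density argument, however, needs $\lim_{n}\limsup_{\delta\to0}\|C_\delta(w-w_n)\|_{L^r(K)}=0$. After your substitution, the first term involves $\int|w-w_n|^r(y)\,|\nabla\bm B|^r(y+h)\,dy$ with a shift $h=(1-t)\delta z$ that moves with $\delta$. Some uniformity in $h$ must therefore be supplied, and dominated convergence does not provide it.

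\emph{Standard remedy.} Approximate $\bm B$ instead of $w$.
\begin{enumerate}
\item Since $r<\infty$, smooth fields are dense in $W^{1,r}(K')$. Moreover, $C_\delta$ is linear in $\bm B$, and your Step~1 bound is controlled by $\|\nabla\bm B\|_{L^r(K')}\|w\|_{L^\infty(K')}$. It therefore suffices to treat smooth $\bm B$ and $w\in L^\infty$.
\item For smooth $\bm B$, $\delta^{-1}\bigl(\bm B(x-\delta z)-\bm B(x)\bigr)\to-(z\cdot\nabla)\bm B(x)$ uniformly for $x\in K$, $|z|\le1$.
\item Because $r<\infty$, $w(\cdot-\delta z)\to w$ in $L^r(K)$ for each $z$.
\item By integration by parts, $\int z_i\,\partial_j\eta(z)\,dz$ equals $-1$ if $i=j$ and $0$ otherwise.
\end{enumerate}
Hence the first term of your representation converges in $L^r(K)$ to $w\operatorname{div}\bm B$. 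This cancels the limit of $-\bigl((\operatorname{div}\bm B)w\bigr)*\eta_\delta$.

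Alternatively, your $w_n$ route can be repaired with Egorov's theorem. Combine it with the absolute continuity of $E\mapsto\int_E|\nabla\bm B|^r$ and the identity $|E+h|=|E|$; together these give the missing uniformity in $h$.
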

	
	\begin{theorem}[Global commutator convergence on bounded domains]\label{ap_rdelta}
		Let $\Omega\subset\mathbb{R}^3$ be a bounded Lipschitz domain and $T>0$. Suppose $\bm v\in L^2(0,T;H^1_0(\Omega)^3)$ and $ q\in L^\infty(S\times\Omega).$
		Define the commutator $r_\delta$ on $S\times\Omega$ by
		\begin{equation}\label{rdelta_def}
			r_\delta:=\bm v\cdot\nabla(q*\eta_\delta)-(\bm v\cdot\nabla q)*\eta_\delta,
		\end{equation}
		where $(\bm v\cdot\nabla q)*\eta_\delta$ is understood distributionally through
		\begin{equation}\label{rdelta_dist}
			\bm v\cdot\nabla q\equiv\operatorname{div}(\bm vq)-q\operatorname{div}\bm v .
		\end{equation}
		Then, we have the following convergence result
		\[
		\lim_{\delta\to0}\|r_\delta\|_{L^1(0,T;L^2(\Omega))}=0 .
		\]
	\end{theorem}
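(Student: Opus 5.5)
Extend $\bv$ and $q$ by zero outside $\Omega$, obtaining $\bar\bv\in L^2(0,T;H^1(\R^3)^3)$ and $\bar q\in L^\infty(S\times\R^3)$. The extension $\bar\bv$ stays in $H^1$ because $\bv$ has zero trace on $\partial\om$, so $\nabla\bar\bv=\chi_\Omega\nabla\bv$. In particular $\operatorname{div}\bar\bv=0$ in $\mathcal D'(\R^3)$. I will then apply Lemma~\ref{ap_Diperna}, part 2, on $\R^3$ with $\bm B=\bar\bv$, $w=\bar q$, $r=2$ and $p=\infty$, for which $r\ge p'=1$ and $1/s=1/2$. This gives
\[
(\bar\bv\cdot\nabla\bar q)*\eta_\delta-\bar\bv\cdot\nabla(\bar q*\eta_\delta)\to0\quad\text{in }L^1(0,T;L^2_{\mathrm{loc}}(\R^3)).
\]
One check is needed here: Lemma~\ref{ap_Diperna} requires $\bm B\in L^1(0,T;W^{1,r})$, whereas we have $L^2$ in time, which is stronger since $T<\infty$. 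The product $\bar\bv\cdot\nabla\bar q$ is interpreted as $\operatorname{div}(\bar\bv\bar q)$, consistently with \eqref{rdelta_dist}.

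Since $\Omega$ is bounded, $L^2_{\mathrm{loc}}(\R^3)$ convergence gives convergence in $L^2(\Omega)$, after choosing a ball containing $\overline\Omega$. What remains is to identify the restriction to $\Omega$ of this whole-space commutator with $r_\delta$ as defined in \eqref{rdelta_def}. On $\Omega$ we have $\bar\bv=\bv$, and $q*\eta_\delta$ in \eqref{rdelta_def} already means the mollification of the zero extension, as in \eqref{q_delta_def}. The term $(\bv\cdot\nabla q)*\eta_\delta$ must be read as $\operatorname{div}(\bar\bv\bar q)*\eta_\delta$. This is a smooth function because it equals $\int\bar q\,\bar\bv(y)\cdot\nabla_x\eta_\delta(x-y)\,dy$. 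The corresponding term $q\operatorname{div}\bv$ vanishes.

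The main obstacle is the boundary: no cutoff near $\partial\Omega$ should be needed, and this needs care. With the zero-extension convention it is automatic, because the DiPerna--Lions estimate holds on all of $\R^3$. The key ingredients are the standard bound
\[
\|r_\delta\|_{L^2}\le C\|\nabla\bar\bv\|_{L^2}\|\bar q\|_{L^\infty},
\]
and convergence for smooth $\bar\bv$, combined by density. For smooth $\bar\bv$ the commutator tends to zero in $L^2$ even though $\bar q$ jumps across $\partial\Omega$. This works because the commutator only involves differences $\bar\bv(x)-\bar\bv(y)$ over $|x-y|<\delta$, written through $\int \bar q(y)(\bar\bv(x)-\bar\bv(y))\cdot\nabla\eta_\delta(x-y)\,dy$ plus a $\bar q\operatorname{div}\bar\bv$ term that vanishes. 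If needed I would reprove this directly rather than cite Lemma~\ref{ap_Diperna}. First write $r_\delta$ in that integral form, and bound its $L^2(\R^3)$ norm by $\|\bar q\|_\infty\int_{|z|<1}\|\bar\bv(\cdot-\delta z)-\bar\bv\|_{L^2}\delta^{-1}|\nabla\eta(z)|dz\le C\|\bar q\|_\infty\|\nabla\bar\bv\|_{L^2}$. Then conclude pointwise in $t$ by density of smooth fields in $H^1(\R^3)$, and in $L^1(0,T)$ by dominated convergence with dominating function $C\|\bar q\|_\infty\|\nabla\bv(t)\|_{L^2}\in L^1(0,T)$.
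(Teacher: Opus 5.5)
Your route is the paper's: extend by zero, use the zero trace to get $\nabla\bar{\bv}=\chi_\Omega\nabla\bv$, and apply Lemma~\ref{ap_Diperna}(2) with $\bm B=\bar{\bv}$, $w=\bar q$, $r=2$, $p=\infty$, $s=2$. You then identify the restricted whole-space commutator with $r_\delta$ and restrict to a bounded set containing $\overline{\Omega}$.

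The one problem is your claim that ``in particular $\operatorname{div}\bar{\bv}=0$''. You use it to drop $q\operatorname{div}\bv$ from \eqref{rdelta_dist}, and to drop $\bar q\operatorname{div}\bar{\bv}$ in your direct reproof. The statement assumes only $\bv\in L^2(0,T;H^1_0(\Omega)^3)$. The zero-trace identity gives $\operatorname{div}\bar{\bv}=\chi_\Omega\operatorname{div}\bv$, which need not vanish. Divergence-freeness holds where the result is applied (Theorem~\ref{thm:qstrong}), but it is not a hypothesis here. As written, you prove a weaker statement.

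The repair is what the paper does. Keep the term, and note that $\bar q\operatorname{div}\bar{\bv}=\chi_\Omega q\operatorname{div}\bv\in L^2$ is the zero extension of $q\operatorname{div}\bv$. Then set $\bar r_\delta:=\bar{\bv}\cdot\nabla(\bar q*\eta_\delta)-\operatorname{div}(\bar{\bv}\bar q)*\eta_\delta+(\bar q\operatorname{div}\bar{\bv})*\eta_\delta$, whose restriction to $S\times\Omega$ is $r_\delta$. Lemma~\ref{ap_Diperna} is stated for exactly this full commutator, so your main argument is otherwise unchanged.

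In the direct reproof the term is not cosmetic. Smooth approximants $\bm B_k$ of a non-solenoidal $\bar{\bv}$ are not solenoidal. For them the integral piece $\int\bar q(y)(\bm B_k(x)-\bm B_k(y))\cdot\nabla\eta_\delta(x-y)\,dy$ converges in $L^2_{\mathrm{loc}}$ to $-\bar q\operatorname{div}\bm B_k$, not to $0$. It is the added $(\bar q\operatorname{div}\bm B_k)*\eta_\delta$ that cancels this limit. The uniform bound $C\|\bar q\|_{L^\infty}\|\nabla\bar{\bv}\|_{L^2}$ survives because $|\operatorname{div}\bar{\bv}|\le\sqrt3\,|\nabla\bar{\bv}|$. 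The density and dominated-convergence steps then go through as you describe.
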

	
	\begin{proof}
		Let $\bar \bv$ and $\bar q$ denote the extensions by zero onto
		$S\times\mathbb{R}^3$:
		\[
		\bar \bv(x,t)=
		\begin{cases}
			\bv(x,t), & x\in\Omega,\\
			0, & \text{otherwise},
		\end{cases}
		\qquad
		\bar q(x,t)=
		\begin{cases}
			q(x,t), & x\in\Omega,\\
			0, & \text{otherwise}.
		\end{cases}
		\]
		The extension by zero maps $H^1_0(\Omega)^3$ into $H^1(\mathbb{R}^3)^3$ with
		$\nabla\bar{\bm v}=\chi_\Omega\nabla\bm v$; indeed, for
		$\varphi\in C_c^\infty(\mathbb{R}^3)^3$,
		\[
		\int_{\mathbb{R}^3}\bar{\bm v}\operatorname{div}\varphi
		=\int_\Omega \bm v\operatorname{div}\varphi
		=-\int_\Omega\nabla\bm v:\varphi
		=-\int_{\mathbb{R}^3}(\chi_\Omega\nabla\bm v):\varphi.
		\]
		No boundary term arises because $\bv$ has zero trace. Hence,
		\begin{equation}\label{vq_bar}
			\bar{\bm v}\in L^2(0,T;H^1(\mathbb{R}^3)^3)\qquad\text{and}\qquad
			\|\bar q\|_{L^\infty(S\times\mathbb{R}^3)}=\|q\|_{L^\infty(S\times\Omega)} .
		\end{equation}
		Since \(S=(0,T)\) has finite measure and the global spaces are continuously embedded in their respective local spaces, it follows that
		$$\bar{\bm v}\in L^1(0,T;H^{1}_{\mathrm{loc}}(\mathbb{R}^3)^3)\qquad\text{and}\qquad \bar q\in L^\infty(0,T;L^\infty_{\mathrm{loc}}(\mathbb{R}^3)).$$
		Since $\bm v\in H^1_0(\Omega)^3$, the trace of $\bm v$ on $\partial\Omega$
		vanishes, so for every $\vp\in C_c^\infty(\mathbb{R}^3)$ and a.e.\ $t\in S$, we deduce
		\[
		\big\langle\operatorname{div}(\bar{\bm v}\bar q),\vp\big\rangle
		=-\int_{\mathbb{R}^3}\bar q\,\bar{\bm v}\cdot\nabla\vp
		=-\int_{\Omega}q\,\bm v\cdot\nabla\vp
		=\big\langle\chi_\Omega\operatorname{div}(\bm vq),\vp\big\rangle.
		\]
		Hence, in the sense of distributions, $\operatorname{div}(\bar \bv\bar q)
		=\chi_\Omega\,\operatorname{div}(\bv q)$. Similarly, $\bar q\,\operatorname{div}\bar \bv=\chi_\Omega q\,\operatorname{div}\bv.$ Therefore, using the distributional definition of the transport term,
		\[
		\bar \bv\cdot\nabla\bar q
		:=\operatorname{div}(\bar \bv\bar q)-\bar q\,\operatorname{div}\bar \bv
		=\chi_\Omega\bigl(\operatorname{div}(\bv q)-q\,\operatorname{div}\bv\bigr) 
		=\chi_\Omega(\bv\cdot\nabla q).
		\] 
		Consequently, with
		\[
		\bar r_\delta:=\bar{\bm v}\cdot\nabla(\bar q*\eta_\delta)
		-\operatorname{div}(\bar{\bm v}\bar q)*\eta_\delta
		+(\bar q\operatorname{div}\bar{\bm v})*\eta_\delta ,
		\]
		the identity \eqref{rdelta_dist} and $(\bar q*\eta_\delta)|_\Omega=q*\eta_\delta$
		give
		\begin{equation}\label{restriction}
			\bar r_\delta\big|_{S\times\Omega}=r_\delta .
		\end{equation}
		Apply Lemma~\ref{ap_Diperna}\,\eqref{ap_Diperna_2} with $\bm B=\bar{\bm v}$, $w=\bar q$, $r=2$ and $p=\infty$. The conjugate exponent is $p'=1$, so $r\ge p'$ holds, and since
		$r<\infty$,
		\[
		\frac{1}{s}=\frac{1}{r}+\frac{1}{p}=\frac12,\qquad\text{i.e. }s=2 .
		\]
		The hypotheses of the lemma hold by \eqref{vq_bar}; therefore,
		\begin{equation}\label{bar_rdelta}
			\bar r_\delta\xrightarrow{\delta\to0}0\qquad\text{strongly in }
			L^1(0,T;L^2_{\mathrm{loc}}(\mathbb{R}^3)).
		\end{equation}
		Since $\Omega$ is bounded, there exists a bounded open set $K\supset\overline{\Omega}$. Then \eqref{bar_rdelta} gives
		$$\|\bar r_\delta\|_{L^1(0,T;L^2(K))}\sc 0,$$
		and by \eqref{restriction}, we conclude
		\[
		\|r_\delta\|_{L^1(0,T;L^2(\Omega))}
		=\|\bar r_\delta\|_{L^1(0,T;L^2(\Omega))}
		\le\|\bar r_\delta\|_{L^1(0,T;L^2(K))}\longrightarrow0 .
		\]
		This completes the proof.
	\end{proof}
	
	\addcontentsline{toc}{section}{References}
	\small	
	\bibliographystyle{plain}
	\bibliography{references}

\end{document}